\date{today}
\newtheorem{remark}{Remark}[section]
\newtheorem{theorem}{Theorem}[section]
\newtheorem{proposition}{Proposition}[section]
\newtheorem{lemma}{Lemma}[section]
\newtheorem{corollary}{Corollary}[section]
\newcommand{\beq}{\begin{equation}}
	\newcommand{\eeq}{\end{equation}}
\newcommand{\ben}{\begin{eqnarray}}
	\newcommand{\een}{\end{eqnarray}}
\newcommand{\beno}{\begin{eqnarray*}}
	\newcommand{\eeno}{\end{eqnarray*}}
\numberwithin{equation}{section}
\begin{document}
	\title[Suppression of blow-up via the 3-D Couette flow]{Suppression of blow-up in
		Patlak-Keller-Segel system coupled with linearized Navier-Stokes equations via the 3D Couette flow}
	\author{Shikun~Cui}
	\address[Shikun~Cui]{School of Mathematical Sciences, Dalian University of Technology, Dalian, 116024,  China; Department of Mathematics and Statistics, McMaster University, Hamilton, Ontario, L8S 4K1, Canada}
	\email{cskmath@163.com}
	\author{Lili~Wang}
	\address[Lili~Wang]{School of Mathematical Sciences, Dalian University of Technology, Dalian, 116024,  China}
	\email{wanglili\_@mail.dlut.edu.cn}
	\author{Wendong~Wang}
	\address[Wendong~Wang]{School of Mathematical Sciences, Dalian University of Technology, Dalian, 116024,  China}
	\email{wendong@dlut.edu.cn}
	\date{\today}
	\maketitle

	\begin{abstract} It is known that finite-time blow-up in the 3D Patlak-Keller-Segel system  may occur for arbitrarily small values of the initial mass. It's interesting  whether one can prevent the finite-time blow-up via the stabilizing effect of the moving fluid.
		Consider the three-dimensional  Patlak-Keller-Segel system coupled with the linearized Navier-Stokes equations
		near the  Couette flow $(\ Ay, 0, 0 \ )$
		in a finite channel $\mathbb{T}\times\mathbb{I}\times\mathbb{T}$ with $ \mathbb{T}=[0,2\pi) $ and $ \mathbb{I}=[-1,1] $,
		with 
		the non-slip boundary condition, 
		and we show that if the  shear flow is sufficiently strong (A is large enough), then the solutions to  Patlak-Keller-Segel-Navier-Stokes system are global in time as long as 
		the initial cell mass is sufficiently small (for example, $M<\frac49$) and $ A\left(\|u_{2,0}(0)\|_{L^{2}}+\|u_{3,0}(0)\|_{L^{2}} \right)\leq C_{0} $, which 
		seems to be the first result of considering the suppression effect of Couette flow in the 3D Patlak-Keller-Segel-Navier-Stokes model, and also the first time considering the non-slip boundary condition.

	\end{abstract}
	
	{\small {\bf Keywords:} Patlak-Keller-Segel-Navier-Stokes system;
		Couette flow;
		Enhanced dissipation;
		Blow-up}
	
	\section{Introduction}
	Considering the following three-dimensional parabolic-elliptic Patlak-Keller-Segel (PKS) system coupled with the
	Navier-Stokes equations in a finite channel $ \mathbb{T}\times\mathbb{I}\times\mathbb{T} $ with $ \mathbb{T}=[0,2\pi) $ and $ \mathbb{I}=[-1,1] $:
	\begin{equation}\label{ini}
		\left\{
		\begin{array}{lr}
			\partial_tn+v\cdot\nabla n=\triangle n-\nabla\cdot(n\nabla c), \\
			\triangle c+n-c=0, \\
			\partial_tv+ v\cdot\nabla v+\nabla P=\triangle v+n\nabla\phi,\quad\nabla\cdot v=0, \\
			(n,v)\big|_{t=0}=(n_{\rm in},v_{\rm in}),
		\end{array}
		\right.
	\end{equation}
	where $n$ represents the cell density, $c$ denotes the chemoattractant density, and $v$ denotes the velocity of fluid. In addition, $P$ is the pressure and $\phi$ is the potential function. 
	
	If $v=0$ and $\phi=0$,  the system (\ref{ini}) is reduced to  the classical 3D parabolic-elliptic Patlak-Keller-Segel system.
	If $n=0$ and $c=0$,  the system (\ref{ini}) becomes the 3D Navier-Stokes equations.
	The Patlak-Keller-Segel system is proposed as a macroscopic model for chemotactic
	cell migration,
	which was jointly developed by Patlak \cite{Patlak1}, Keller and Segel \cite{Keller1}. 
	This system has wide applications in the fields of biology, ecology, and medicine.  It is of significant importance in cancer research, simulating bacterial diffusion behavior, and tissue development, among others.

	In recent years, extensive mathematical
	efforts have been undertaken to detect unbounded solutions of the PKS system.
	As long as the dimension of space is higher than one, the solutions of the classical PKS system may blow up in finite time. 
	The 2D PKS model of parabolic-parabolic has a critical mass of $8\pi$, if the cell mass $M:=||n_{\rm in}||_{L^1}$ is less than $8\pi$,  the solutions of the system are global in time \cite{Calvez1},
	if the cell mass is greater than $8\pi$, the solutions will blow up in finite time \cite{Schweyer1}.
	Moreover, the 2D parabolic-elliptic Patlak-Keller-Segel system is globally well-posed if and only if the total mass $M\leq8\pi$ by Wei in \cite{wei11}.
	When the spatial dimension is higher than two, the solutions of the PKS system will blow up for any initial mass, meaning that
	no mass threshold for aggregation exists in that case (for example, see \cite{HM}, \cite{nagai1995}, \cite{Na}, \cite{nagai1996}, \cite{sw2019}, \cite{winkler1} ).
	Therefore, 
	{\it an interesting question is to consider whether the stabilizing effect of the moving fluid can suppress the finite-time blow-up?}
	
	Let us first recall some results of shear flows in 2D briefly.
	
	{\bf I. The PKS system in 2D.} For the  parabolic-elliptic PKS system, Kiselev-Xu suppressed the  blow-up by stationary relaxation
	enhancing flows or time-dependent Yao-Zlatos near-optimal mixing flows in $\mathbb{T}^d$ \cite{Kiselev1}.
	Bedrossian-He \cite{Bedro2} also studied the suppression of blow-up by shear flows in
	$\mathbb{T}^2$ for the 2D parabolic-elliptic case. He \cite{he0} investigated the suppression of blow-up by a large strictly monotone shear flow for the parabolic-parabolic PKS model in $\mathbb{T}\times\mathbb{R}$. 
	
	{\bf II. The PKS-NS system in 2D.} For the coupled PKS-NS system, Zeng-Zhang-Zi considered the 2D PKS-NS system near the Couette flow in $\mathbb{T}\times\mathbb{R}$, and they
	proved that if the Couette flow is sufficiently strong, the solution to the system stays globally regular \cite{zeng}.
	He considered the blow-up suppression for the parabolic-elliptic PKS-NS system in
	$\mathbb{T}\times\mathbb{R}$ with the coupling of buoyancy effects \cite{he05} for a class of small initial data.
	Li-Xiang-Xu studied the suppression of blow-up in PKS-NS system via the Poiseuille flow in $\mathbb{T}\times\mathbb{R}$,
	and they showed that if Poiseuille flow is sufficiently strong, the solution is global in \cite{Li0} by assuming the smallness of the initial vorticity. Cui-Wang considered Poiseuille flow with the boundary of PKS-NS system and obtained the solutions are global regular without any smallness condition \cite{cui1}.
	
	{\bf III. The PKS system in 3D.}
	For the 3D PKS system of parabolic-elliptic case, Bedrossian-He investigated the suppression of blow-up by shear flows in $\mathbb{T}^3$ and $\mathbb{T}\times\mathbb{R}^2$ by assuming the initial mass is less than $8\pi$ in \cite{Bedro2}. 
	Feng-Shi-Wang \cite{Feng1} used the planar helical flows as transport
	flow to research the advective Kuramoto-Sivashinsky and
	Keller-Segel equations, and they  proved that when the amplitude of the flow is large enough, the $L^2$ norm of the solution is uniformly bounded in time. 
	Shi-Wang \cite{wangweike2} considered the suppression effect of the flow $(y,y^2,0)$ in $\mathbb{T}^2\times\mathbb{R}$, and Deng-Shi-Wang \cite{wangweike1} proved the Couette flow with a sufficiently large amplitude prevents
	the blow-up of solutions in the whole space. Besides, for the stability effect of buoyancy,
	Hu-Kiselev-Yao considered the blow-up suppression for the Patlak-Keller-Segel system coupled with a fluid flow that obeys Darcy's law for incompressible porous media via buoyancy force \cite{Hu0}.
	Hu and Kiselev proved that when the coupling is large enough, the Keller-Segel equations coupled with Stokes-Boussinesq flow is globally well-posed \cite{Hu1}, see also the recent result by Hu \cite{Hu2023}.
	
	{\bf IV. The PKS-NS system in 3D.} 
	For the 3D coupled PKS-NS system, it is still unknown whether the blow-up does not happen provided that the amplitude of some shear flow is sufficiently large. Suppression of blow-up via non-parallel shear flows was obtained in \cite{cui2} by Cui-Wang-Wang.
	As mentioned by Zeng-Zhang-Zi ({Remark 1.3 in \cite{zeng}} ):{\it ``It is very interesting to investigate the corresponding problems with boundary effects taken into account. "} 
	Exploring the potential suppression of blow-up when considering the non-slip boundary condition or the 3D Couette flow seems to have not been taken into account in this issue yet.   	
	Our main goal is to investigate these issues in this paper. 
	
	Before expressing our main theorem, we first introduce a perturbation $u$ around the three-dimensional Couette flow $(\ Ay,0,0\ )$, which $u(t,x,y,z)=v(t,x,y,z)-(\ Ay,0,0\ )$ satisfying $ u|_{t=0}=u_{\rm in}=(u_{1,\rm in}, u_{2,\rm in}, u_{3,\rm in}) $.  Then we rewrite the linearized form of system (\ref{ini}) into
	\begin{equation}\label{ini1}
		\left\{
		\begin{array}{lr}
			\partial_tn+Ay\partial_x n+u\cdot\nabla n-\triangle n=-\nabla\cdot(n\nabla c), \\
			\triangle c+n-c=0, \\
			\partial_tu+Ay\partial_x u+\left(
			\begin{array}{c}
				Au_2 \\
				0 \\
				0 \\
			\end{array}
			\right)
			-\triangle u+\nabla P=\left(
			\begin{array}{c}
				n \\
				0 \\
				0 \\
			\end{array}
			\right), \\
			\nabla \cdot u=0,
		\end{array}
		\right.
	\end{equation}
	together with the  boundary conditions
	\begin{equation}\label{boundary condition}
		\begin{aligned}
			n(t,x,\pm 1,z)=0,\quad
			c(t,x,\pm 1,z)=0.
		\end{aligned}
	\end{equation}
	In addition,  $u$ is imposed  the non-slip boundary condition 
	\begin{equation}\label{boundary condition2}
		\quad u(t,x,\pm 1,z)=0.
	\end{equation}

	\begin{remark}
		For the three dimensional Navier-Stokes equations, 3D lift-up effect is an important factor leading to its instability. In blow-up suppression, the lift-up effect also brings us difficulties. Here we select the special potential function $\phi=x$, for simplicity.
	\end{remark}
	
	Inspired by \cite{Chen1}, we introduce the vorticity $\omega_2=\partial_zu_1-\partial_xu_3$ and $\triangle u_2$, satisfying
	$$\partial_t\omega_2+Ay\partial_x\omega_2+A\partial_zu_2-\triangle\omega_2=\partial_{z}n,$$
	and 
	$$\partial_t\triangle u_2+Ay\partial_x \triangle u_2
	-\triangle^2 u_2=-\partial_{y}\partial_{x}n.$$
	
	After the time rescaling $t\mapsto\frac{t}{A}$, we get
	\begin{equation}\label{ini2}
		\left\{
		\begin{array}{lr}
			\partial_tn+y\partial_x n-\frac{1}{A}\triangle n=-\frac{1}{A}\nabla\cdot(u n)-\frac{1}{A}\nabla\cdot(n\nabla c), \\
			\triangle c+n-c=0, \\
			\partial_t\omega_2+y\partial_x\omega_2-\frac{1}{A}\triangle\omega_2=
			-\partial_zu_2+\frac{1}{A}\partial_{z}n, \\
			\partial_t\triangle u_2+y\partial_x \triangle u_2
			-\frac{1}{A}\triangle(\triangle u_2) =-\frac{1}{A}\partial_{y}\partial_{x}n, \\
			\nabla \cdot u=0,	\\
		\end{array}
		\right.
	\end{equation}
	with the boundary condition
	\begin{equation}\label{ini2_1}
		\left\{
		\begin{array}{lr}
			n(t,x,\pm1,z)=0,\\
			c(t,x,\pm1,z)=0,
			\\\omega_{2}(t, x,\pm 1, z)=0,
			\\
			\partial_{y}u_{2}(t,x,\pm1,z)=u_{2}(t,x,\pm 1,z)=0		
			.
		\end{array}
		\right.
	\end{equation}
	
	The main results of this paper are as follows.
	\begin{theorem}\label{result}
		Assume that the initial data $n_{\rm in}\in H^2(\mathbb{T}\times\mathbb{I}\times\mathbb{T})$, $u_{\rm in}\in H^2(\mathbb{T}\times\mathbb{I}\times\mathbb{T})$ and the initial cell mass $ M=\|n_{\rm in}\|_{L^{1}} $ satisfies $ C_{*}^{3}M<1 $ , where $ C_{*} $ is the sharp Sobolev constant of 
		\begin{equation*}
			\left\{
			\begin{array}{lr}
				\|f\|_{L^3(\mathbb{I}\times\mathbb{T})}\leq C_{*} \|f\|_{L^1(\mathbb{I}\times\mathbb{T})}^{\frac13}\|\nabla f\|_{L^2(\mathbb{I}\times\mathbb{T})}^{\frac23},\\
				f(y,z)|_{y=\pm 1}=0,\quad \int_{\mathbb{T}}f(y,\cdot)dz=0.	
			\end{array}
			\right.
		\end{equation*}
		Then there exists a positive constant $D_{0}$ depending on $||n_{\rm in}||_{H^{2}(\mathbb{T}\times\mathbb{I}\times\mathbb{T})}$
		and $||u_{\rm in}||_{H^2(\mathbb{T}\times\mathbb{I}\times\mathbb{T})}$, such that if $A\geq D_{0}$,
		and 
		\begin{equation}\label{u small}
			A\left(\|u_{2,0}(0)\|_{L^{2}}+\|u_{3,0}(0)\|_{L^{2}} \right)\leq C_{0},
		\end{equation}
		where $ C_{0} $ is an absolute constant,
		the solution of (\ref{ini2})-(\ref{ini2_1}) is global in time.
	\end{theorem}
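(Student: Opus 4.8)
The plan is to close a continuity (bootstrap) argument. Standard parabolic theory gives a unique local-in-time solution of \eqref{ini2}--\eqref{ini2_1} on a maximal interval $[0,T^*)$ for the prescribed $H^2$ data; assuming $T^*<\infty$, I would derive a collection of a priori bounds that self-improve, forcing $T^*=\infty$. The backbone is the decomposition of each unknown into its $x$-average and remainder, $g=g_0+g_{\neq}$ with $g_0(y,z)=\tfrac1{2\pi}\int_{\mathbb{T}}g\,dx$. The zero modes solve a two-dimensional problem on $\mathbb{I}\times\mathbb{T}$ that carries the genuine Keller--Segel aggregation and the fluid lift-up, while the non-zero modes are transported by the Couette flow $y\partial_x$ and, after the rescaling $t\mapsto t/A$, dissipate at the enhanced rate $\sim A^{-1/3}$; the two subsystems are linked only through terms weighted by $1/A$.

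First I would establish enhanced dissipation for the non-zero modes. For $n_{\neq}$ this is the scalar mechanism for $\tfrac1A\Delta-y\partial_x$, giving $L^2$-decay at rate $\sim A^{-1/3}$. For the fluid I must instead treat the \emph{coupled} linear system obeyed by $\omega_{2,\neq}$ and $(\Delta u_2)_{\neq}$, in which $\omega_2$ is forced by the lift-up term $-\partial_z u_2$ of \eqref{ini2}; proving enhanced dissipation for this coupled pair, rather than for a single scalar, is the genuinely three-dimensional point. Since the non-slip condition \eqref{boundary condition2} forbids a plain Fourier expansion in $y$, I would proceed through a time-weighted hypocoercive energy functional (equivalently, resolvent bounds for the Couette operator under Dirichlet data), carefully controlling the boundary contributions from integration by parts. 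The PKS--fluid coupling enters only through $\tfrac1A\partial_z n$, $-\tfrac1A\partial_y\partial_x n$ and $-\tfrac1A\nabla\cdot(un)$, all carrying the prefactor $1/A$, so combined with the $A^{-1/3}$ decay these forcings are absorbed in the bootstrap.

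Next I would control the zero modes. Testing the $n_0$-equation against $n_0$ and using $\Delta c_0=c_0-n_0$ produces the aggregation term $\tfrac1{2A}\int n_0^3$, while the divergence-free advection by $(u_{2,0},u_{3,0})$ drops out. After splitting off the $z$-average of $n_0$ (a one-dimensional part handled directly), the stated sharp Gagliardo--Nirenberg inequality — whose zero-$z$-mean constraint matches the remainder — together with mass conservation gives $\tfrac1{2A}\int n_0^3\le \tfrac{C_*^3 M}{2A}\|\nabla n_0\|_{L^2}^2$, which the hypothesis $C_*^3M<1$ lets me absorb into the dissipation $\tfrac1A\|\nabla n_0\|_{L^2}^2$, exactly as in the two-dimensional critical-mass theory. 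The fluid zero modes $u_{2,0},u_{3,0}$ dissipate only at the slow rate $\sim1/A$ and, through the lift-up, drive the streamwise component via $\partial_t u_{1,0}-\tfrac1A\Delta u_{1,0}=-u_{2,0}+\tfrac1A n_0+\cdots$; integrating this source over the dissipative timescale $\sim A$ lets $u_{1,0}$ grow to size $\sim A\left(\|u_{2,0}(0)\|_{L^2}+\|u_{3,0}(0)\|_{L^2}\right)$. This is precisely why \eqref{u small} is imposed: with $C_0$ small, $u_{1,0}$ stays of order one, so the transport $\tfrac1A u_{1,0}\partial_x$ it adds to the $n$- and vorticity-equations is only a small perturbation of $y\partial_x$ and does not spoil the enhanced dissipation.

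Finally I would assemble the pieces: enhanced-dissipation smallness of all non-zero modes, the critical-mass bound on $n_0$, and the lift-up bound from \eqref{u small} together show the bootstrap quantities improve strictly, yielding global existence. I expect the \emph{main obstacle} to be the enhanced-dissipation estimates for the coupled fluid system under the non-slip boundary condition, where boundary layers near $y=\pm1$ obstruct the clean spectral arguments available on $\mathbb{T}\times\mathbb{R}$, together with the bookkeeping needed to make the $1/A$ prefactors, the $A^{-1/3}$ decay and the $A$-amplified lift-up balance simultaneously; choosing a functional framework in which the Keller--Segel nonlinearity and the Navier--Stokes estimates close at the same regularity level, without sacrificing the mass smallness $C_*^3M<1$ or the coupling smallness $C_0$, is the delicate part.
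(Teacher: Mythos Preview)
Your plan matches the paper's architecture: a bootstrap on a non-zero-mode energy $E(t)=\|\partial_x\omega_{2,\neq}\|_{Y_a}+\|n_\neq\|_{Y_a}+\|u_{2,\neq}\|_{X_a}$ together with a bound on $n$, with enhanced dissipation of $y\partial_x-\tfrac1A\Delta$ supplying the $A^{-1/3}$ gain, the further $z$-splitting $n_0=n_{(0,0)}+n_{(0,\neq)}$ isolating the piece on which the sharp $C_*$-inequality acts, and \eqref{u small} neutralising the lift-up growth of $u_{1,0}$.

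One point needs sharpening. You propose only $L^2$ control of $n_0$, but that alone does not close the non-zero-mode estimates: in the $Y_a$-bound for $n_\neq$ the products $u_\neq n_0$, $n_0\nabla c_\neq$ and $(n_\neq\nabla c_\neq)_\neq$ are handled by pulling out $\|n\|_{L^\infty L^\infty}$. Accordingly the paper's second bootstrap quantity is $\|n\|_{L^\infty L^\infty}\le 2E_1$, and closing it (Proposition~\ref{prop 2}) requires a Moser--Alikakos iteration in $p=2^j$, fed by the $L^2$ bound on $n_0$ you describe (which appears as the intermediate Proposition~\ref{priori0}) together with $\|\nabla c\|_{L^4}\lesssim\|n\|_{L^2}$. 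So your $L^2$ step is present in the paper but as a lemma, not the endpoint of the zero-mode analysis.

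On the linear side the paper does not redevelop hypocoercivity; it imports the finite-channel space--time estimates from Chen--Wei--Zhang as black boxes (Propositions~\ref{slip} and~\ref{noslip}), which already contain the boundary-layer analysis you flag as the main obstacle. The norms $X_a^{k_1,k_3},Y_a^{k_1,k_3}$ are designed so that, after summing in frequency, the lift-up forcing $-\partial_z u_2$ in the $\omega_2$-equation is absorbed by the $\eta|k_1|\|e^{aA^{-1/3}t}(-\partial_y,i\eta)u_2\|_{L^2L^2}^2$ piece of $\|u_2\|_{X_a}$; this is why $\partial_x\omega_{2,\neq}$, not $\omega_{2,\neq}$, sits in $Y_a$.
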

	
	The following theorem provides a specific upper bound for the initial cell mass $ M. $	
	\begin{theorem}\label{result 1}
		Assume that the initial data $n_{\rm in}\in H^2(\mathbb{T}\times\mathbb{I}\times\mathbb{T})$, $u_{\rm in}\in H^2(\mathbb{T}\times\mathbb{I}\times\mathbb{T})$ and the initial cell mass $ M=\|n_{\rm in}\|_{L^{1}}<\frac49 $.
		Then there exists a positive constant $D_{0}$ depending on $||n_{\rm in}||_{H^{2}(\mathbb{T}\times\mathbb{I}\times\mathbb{T})}$
		and $||u_{\rm in}||_{H^2(\mathbb{T}\times\mathbb{I}\times\mathbb{T})}$, such that if $A\geq D_{0}$,
		and 
		\begin{equation}\label{u small}
			A\left(\|u_{2,0}(0)\|_{L^{2}}+\|u_{3,0}(0)\|_{L^{2}} \right)\leq C_{0},
		\end{equation}
		where $ C_{0} $ is an absolute constant,
		the solution of (\ref{ini2})-(\ref{ini2_1}) is global in time.
	\end{theorem}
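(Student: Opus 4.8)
The plan is to obtain Theorem~\ref{result 1} as a direct specialization of Theorem~\ref{result}, rather than by a new global-existence argument. Theorem~\ref{result} already guarantees that the solution of \eqref{ini2}--\eqref{ini2_1} is global (for $A\ge D_0$ and under the same smallness assumption on the zero modes of $u_2$ and $u_3$) whenever the sharp Sobolev constant satisfies $C_*^3 M<1$. Hence it suffices to prove the purely analytic statement that this sharp constant obeys $C_*^3\le \tfrac94$, i.e. $C_*\le(\tfrac94)^{1/3}$. Granting this, the hypothesis $M<\tfrac49$ immediately gives $C_*^3 M\le \tfrac94\,M<\tfrac94\cdot\tfrac49=1$, so Theorem~\ref{result} applies verbatim with the very same constants $D_0$ and $C_0$, and the conclusion follows. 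Thus the entire content of Theorem~\ref{result 1} is reduced to the explicit bound on the sharp constant of the Gagliardo--Nirenberg--Sobolev inequality on $\mathbb{I}\times\mathbb{T}$.

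To bound $C_*$ I would establish the inequality with an explicit constant by exploiting \emph{both} structural conditions simultaneously: the Dirichlet condition $f(\pm1,z)=0$ in $y$ and the zero $z$-average $\int_{\mathbb{T}}f(y,\cdot)\,dz=0$. After replacing $f$ by $|f|$, which only decreases the right-hand side since $|\nabla|f||\le|\nabla f|$ almost everywhere, the fundamental theorem of calculus yields the two pointwise estimates
\[
f(y,z)^2\le \int_{-1}^{1}|f\,\partial_y f|\,dy', \qquad f(y,z)^2\le \int_{\mathbb{T}}|f\,\partial_z f|\,dz',
\]
where the first uses the vanishing of $f$ at $y=\pm1$ and the second uses that the zero $z$-mean forces $f(y,\cdot)$ to vanish at some point of $\mathbb{T}$. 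Writing $|f|^3=|f|\cdot f^2$, substituting the geometric mean of these two bounds for $f^2$, integrating over $\mathbb{I}\times\mathbb{T}$, and then applying H\"older's and the Cauchy--Schwarz inequalities across the $y$ and $z$ directions, one reassembles the right-hand side into the product $\|f\|_{L^1}\|\nabla f\|_{L^2}^2$.

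The delicate point, and what I expect to be the main obstacle, is to track the numerical constant so that it cubes to at most $\tfrac94$ and not merely to something finite. This sharpening is genuinely necessary: in two dimensions the endpoint embedding $\dot H^1\hookrightarrow L^\infty$ fails, so any crude attempt to pull $\sup|f|^2$ out of $\int|f|^3$ diverges, and both the boundary vanishing and the mean-zero structure must be used in tandem to close the estimate. The natural source of the value $\tfrac94=(\tfrac32)^2$ is the homogeneity of the cubic term, visible through the substitution $g=|f|^{3/2}$, for which $|\nabla g|^2=\tfrac94\,|f|\,|\nabla|f||^2\le\tfrac94\,|f|\,|\nabla f|^2$ while $\|g\|_{L^2}^2=\|f\|_{L^3}^3$; optimizing the split of the dissipation between the two directions (equivalently, the choice of the H\"older exponents above) is precisely what pins the constant down to $\tfrac94$. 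Once $C_*^3\le\tfrac94$ is in hand, Theorem~\ref{result 1} is an immediate consequence of Theorem~\ref{result}.
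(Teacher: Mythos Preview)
Your proposal is correct and matches the paper's approach essentially verbatim: the paper reduces Theorem~\ref{result 1} to Theorem~\ref{result} via the explicit bound $C_*^3\le\tfrac94$, proved in Lemma~\ref{A1} by applying the $L^2$--$L^1$ Nash-type inequality $\|g\|_{L^2}^2\le\bigl(\int|\nabla g|\bigr)^2$ (obtained from the fundamental theorem of calculus in both directions, using the Dirichlet condition in $y$ and the zero $z$-mean) to $g=|f|^{3/2}$, followed by Cauchy--Schwarz. Your identification of the substitution $g=|f|^{3/2}$ as the source of the constant $\tfrac94=(\tfrac32)^2$ is exactly the mechanism the paper uses.
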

	
	\begin{remark}
		The above results seem to be the first results of considering the suppression effect of the 3D Couette flow in the Patlak-Keller-Segel-Navier-Stokes model, and  also the first time considering the non-slip boundary. The constant in Theorem \ref{result 1} should not be optimal, and the existence of an optimal threshold value is still unknown. When considering the domain $\mathbb{T}^3$,  Bedrossian-He investigated  the 3D PKS system of parabolic-elliptic case and proved  the initial mass of less than $8\pi$ ensured the global existence in \cite{Bedro2}, where they used the logarithmic Hardy-Littlewood-Sobolev inequality and Green function on $\mathbb{T}^2 $; see also the recent parabolic-parabolic case in \cite{he24} by the similar idea. It's still unknown whether the similar result holds for the 3D PKS-NS system in a bounded domain of $\mathbb{T}\times \mathbb{I }\times \mathbb{T}$.
	\end{remark}
	
	\begin{remark}
		For non-linearized  Navier-Stokes equations, it seems to be  difficult to study the blow-up suppression of the system due to the existence of nonlinear terms $u\cdot\nabla u$. 
		We will consider this case in a forthcoming paper. 
	\end{remark}

	Here are some notations used in this paper.
	
	\noindent\textbf{Notations}:
	\begin{itemize}
		
		\item Define the Fourier transform by
		\begin{equation}
			f(t,x,y,z)=\sum_{k_1,k_3\in\mathbb{Z}}f^{k_1,k_3}(t,y){\rm e}^{i(k_1x+k_3z)}, \nonumber
		\end{equation}
		where $f^{k_1,k_3}(t,y)=\frac{1}{|\mathbb{T}|^2}\int_{\mathbb{T}\times\mathbb{T}}{f}(t,x,y,z){\rm e}^{-i(k_1x+k_3z)}dxdz.$
		For simplicity, denote $ \eta=\left(k_{1}^{2}+k_{3}^{2} \right)^{\frac12}. $
		
		\item For a given function $f=f(t,x,y,z)$,   write its $x$-part zero mode and  $x$-part non-zero mode by
		$$P_0f=f_0=\frac{1}{|\mathbb{T}|}\int_{\mathbb{T}}f(t,x,y,z)dx,\ {\rm and}\ P_{\neq}f=f_{\neq}=f-f_0.$$
		Especially, we use $u_{k,0}$ and $u_{k,\neq}$ to represent the zero mode
		and non-zero mode of the velocity $u_k(k=1,2,3)$, respectively.
		Similarly, we use $\omega_{k,0}$ and $\omega_{k,\neq}$ to represent the zero mode
		and non-zero mode of the vorticity $\omega_k(k=1,2,3)$.
		\item For the zero mode $ f_{0} $ of a given function $ f=f(t,x,y,z), $ we represent its $ z$-part zero mode and $ z$-part non-zero mode by
		$$f_{(0,0)}=\frac{1}{|\mathbb{T}|}\int_{\mathbb{T}}f_{0}(t,y,z)dz,\ {\rm and}\ f_{(0,\neq)}=f_{0}-f_{(0,0)}.$$

		\item The norm of the $L^p$ space and the time-space norm  $\|f\|_{L^qL^p}$ are defined as
		$$\|f\|_{L^p(\mathbb{T}\times\mathbb{I}\times\mathbb{T})}=\left(\int_{\mathbb{T}\times\mathbb{I}\times\mathbb{T}}|f|^p dxdydz\right)^{\frac{1}{p}},$$
		and
		$$\|f\|_{L^qL^p}=\big\|  \|f\|_{L^p(\mathbb{T}\times\mathbb{I}\times\mathbb{T})}\ \big\|_{L^q(0,t)}.$$
		\item Denote by $ M $ the total mass $ \|n(t)\|_{L^{1}}. $ Clearly, integration by parts and divergence theorem yield that $$  M:=\|n(t)\|_{L^{1}}=\|n_{\rm in}\|_{L^{1}}.$$
		
		\item Throughout this paper, we denote by $ C $ a positive constant independent of $A$, $t$ and the initial data, and it may be different from line to line.
	\end{itemize}
	
	The rest part of this paper is organized as follows. In Section \ref{sec 2}, the key idea and the proof of \textbf{Theorem \ref{result}} are presented. Section \ref{sec 3} is devoted to providing a collection of elementary
	lemmas, which are essential for the proof of \textbf{Proposition \ref{prop 1}} and \textbf{Proposition \ref{prop 2}}.
	In Section \ref{sec 4}, we finish the proof of \textbf{Proposition \ref{prop 1}}.
	The proof of \textbf{Proposition \ref{prop 2}} is established in Section \ref{sec 6} by using a key proposition in Section \ref{sec 5}. In Section \ref{sec 7}, we give the proof of {\textbf{Theorem \ref{result 1}}}. Some useful conclusions are shown in the appendix.
	
	\section{Key ideas and proof of Theorem \ref{result}}\label{sec 2}
	Noting that the enhanced dissipation of fluid only affects the non-zero mode, and it is essential to separate the zero mode and the non-zero mode of system (\ref{ini2}). For given functions $ f $ and $ g, $ there hold
	\begin{equation}\label{zero mode}
		\left(fg\right)_{0}=f_{0}g_{0}+(f_{\neq}g_{\neq})_{0},
	\end{equation}
	and
	\begin{equation}\label{non-zero mode}
		(fg)_{\neq}=f_0g_{\neq}+f_{\neq}g_0+(f_{\neq}g_{\neq})_{\neq}.
	\end{equation}
	For simplicity, we do not provide specific expressions for the zero mode and the non-zero mode of $ n, c, u, \omega_2 $ and $ \triangle u_{2}. $
	\begin{remark}\label{remark 1}
		It should be noted that zero mode and non-zero mode can be controlled by their own functions. That is, for any $ 1\leq p\leq \infty, $ there hold
		\begin{equation*}
			\begin{aligned}
				\|f_{0}\|_{L^{p}(\mathbb{T}\times\mathbb{I}\times\mathbb{T})}\leq\|f\|_{L^{p}(\mathbb{T}\times\mathbb{I}\times\mathbb{T})} ,
			\end{aligned}
		\end{equation*}
		and
		\begin{equation*}
			\|f_{\neq}\|_{L^{p}(\mathbb{T}\times\mathbb{I}\times\mathbb{T})}\leq\|f\|_{L^{p}(\mathbb{T}\times\mathbb{I}\times\mathbb{T})}+\|f_{0}\|_{L^{p}(\mathbb{T}\times\mathbb{I}\times\mathbb{T})}\leq 2\|f\|_{L^{p}(\mathbb{T}\times\mathbb{I}\times\mathbb{T})}.
		\end{equation*}	
		%
	\end{remark}	
	
	\begin{remark}
		In comparison to the 2D case, the equations satisfied by the zero mode of n are more complex in the 3D case.
		There are extra items $ \frac{1}{A}\partial_{z}(n_{0}\partial_{z}c_{0}) $, $\frac{1}{A}\partial_y(u_{2,0}n_0)$ and $\frac{1}{A}\partial_z(u_{3,0}n_0)$ for the 3D case.
	\end{remark}
	
	As in \cite{Chen1}, note that for the linear equation $ \partial_{t}f-\frac{1}{A}\triangle f+y\partial_{x}f=g $ in a finite channel, there are different space-time estimates corresponding to the non-slip boundary condition and Navier-slip boundary condition. Inspired by \cite{Chen1}, we next consider the equations (\ref{ini2}) with boundary conditions (\ref{ini2_1}) in frequency space. Denote
	\begin{equation*}
		\widehat{\triangle}=\widehat{\triangle}^{k_{1},k_{3}}=\partial_{y}^{2}-k_{1}^{2}-k_{3}^{2}.
	\end{equation*}
	Taking Fourier transform for (\ref{ini2})-(\ref{ini2_1}) with respect to $ (x,z), $ we obtain
	\begin{equation}\label{ini3}
		\left\{
		\begin{array}{lr}
			\partial_tn^{k_{1},k_{3}}-\frac{1}{A}\left(\partial_{y}^{2}-\eta^{2} \right)n^{k_{1}, k_{3}}+ik_{1}yn^{k_{1},k_{3}}\\\quad\quad\quad\quad=-\frac{1}{A}\left(ik_{1}, \partial_{y}, ik_{3} \right)\cdot(un)^{k_{1},k_{3}}-\frac{1}{A}\left(ik_{1}, \partial_{y}, ik_{3} \right)\cdot(n\nabla c)^{k_{1},k_{3}}, \\
			\partial_t\omega_{2}^{k_{1},k_{3}}-\frac{1}{A}\left(\partial_{y}^{2}-\eta^{2} \right)\omega_{2}^{k_{1},k_{3}}+ik_{1}y\omega_{2}^{k_{1},k_{3}}=-ik_{3}u_{2}^{k_{1},k_{3}}+\frac{1}{A}ik_{3}n^{k_{1},k_{3}} , \\
			\partial_t\widehat{\triangle}u_{2}^{k_{1},k_{3}}-\frac{1}{A}\left(\partial_{y}^{2}-\eta^{2} \right)\widehat{\triangle}u_{2}^{k_{1},k_{3}}+ik_{1}y\widehat{\triangle}u_{2}^{k_{1},k_{3}}=-\frac{1}{A}ik_{1}\partial_{y}n^{k_{1},k_{3}} , \\
			ik_{1}u_{1}^{k_{1},k_{3}}+\partial_{y}u_{2}^{k_{1},k_{3}}+ik_{3}u_{3}^{k_{1},k_{3}}=0,	\\
			n^{k_{1},k_{3}}|_{y=\pm1}=0,
			\\\omega_{2}^{k_{1},k_{3}}|_{y=\pm1 }=0,\\
			\partial_{y}u_{2}^{k_{1},k_{3}}|_{y=\pm 1}=u_{2}^{k_{1},k_{3}}|_{y=\pm 1}=0,
		\end{array}
		\right.
	\end{equation}
	where $ \eta=\sqrt{k_{1}^{2}+k_{3}^{2}}. $ 
	
	We introduce the following norms:
	\ben\label{eq:x a k}
	\|f\|_{X_{a}^{k_{1},k_{3}}}^{2}&=&\eta|k_{1}|\|e^{aA^{-\frac13}t}(-\partial_{y}, i\eta)f\|_{L^{2}L^{2}}^{2}+A^{-1}\eta^{2}\|e^{aA^{-\frac13}t}(\partial_{y}^{2}-\eta^{2})f\|_{L^{2}L^{2}}^{2}\nonumber \\&&+A^{-\frac32}\|e^{aA^{-\frac13}t}\partial_{y}(\partial_{y}^{2}-\eta^{2})f\|_{L^{2}L^{2}}^{2}+\eta^{2}\|e^{aA^{-\frac13}t}(-\partial_{y}, i\eta)f\|_{L^{\infty}L^{2}}^{2}\nonumber\\&&+A^{-\frac12}\|e^{aA^{-\frac13}t}(\partial_{y}^{2}-\eta^{2})f\|_{L^{\infty}L^{2}}^{2},
	\een
	\ben\label{eq:y a k}	\|f\|_{Y_{a}^{k_{1},k_{3}}}^{2}&=&\|e^{aA^{-\frac13}t}f\|_{L^{\infty}L^{2}}^{2}+A^{-1}\|e^{aA^{-\frac13}t}\partial_{y}f\|_{L^{2}L^{2}}^{2}\nonumber\\
	&&+\left((A^{-1}k_{1}^{2})^{\frac13}+A^{-1}\eta^{2} \right)\|e^{aA^{-\frac13}t}f\|_{L^{2}L^{2}}^{2},
	\een
	and
	\ben\label{eq:x a y a}
	\|f\|_{X_{a}}^{2}=\sum_{k_{1}\neq 0,k_{3}\in\mathbb{Z}}\|\hat{f}(k_{1},k_{3})\|_{X_{a}^{k_{1},k_{3}}}^{2},\quad \|f\|_{Y_{a}}^{2}=\sum_{k_{1}\neq 0,k_{3}\in\mathbb{Z}}\|\hat{f}(k_{1},k_{3})\|_{Y_{a}^{k_{1},k_{3}}}^{2}.
	\een
	It follows that
	\begin{equation}\label{X_a}
		\begin{aligned}
			&\|e^{aA^{-\frac13}t}\partial_{x}\nabla f_{\neq}\|_{L^{2}L^{2}}^{2}+A^{-1}\|e^{aA^{-\frac13}t}(\partial_{x},\partial_{z})\triangle f_{\neq}\|_{L^{2}L^{2}}^{2}+A^{-\frac32}\|e^{aA^{-\frac13}t}\partial_{y}\triangle f_{\neq}\|_{L^{2}L^{2}}^{2}\\&+\|e^{aA^{-\frac13}t}(\partial_{x},\partial_{z})\nabla f_{\neq}\|_{L^{\infty}L^{2}}^{2}+A^{-\frac12}\|e^{aA^{-\frac13}t}\triangle f_{\neq}\|_{L^{\infty}L^{2}}^{2}\leq C\|f\|_{X_{a}}^{2},
		\end{aligned}
	\end{equation}
	and
	\begin{equation}\label{Y_a}
		\|e^{aA^{-\frac13}t}f_{\neq}\|_{L^{\infty}L^{2}}^{2}+\frac{1}{A}\|e^{aA^{-\frac13}t}\nabla f_{\neq}\|_{L^{2}L^{2}}^{2}+\frac{1}{A^{\frac13}}\|e^{aA^{-\frac13}t}f_{\neq}\|_{L^{2}L^{2}}^{2}\leq C\|f\|_{Y_{a}}^{2}.
	\end{equation}

	Moreover, we set 
	\begin{equation*}
		\begin{aligned}
			E(t)=\|\partial_{x}\omega_{2,\neq}\|_{Y_{a}}+\|n_{\neq}\|_{Y_{a}}+\|u_{2,\neq}\|_{X_{a}} ,
		\end{aligned}
	\end{equation*}
	with the initial norm  
	\begin{equation*}
		\begin{aligned}
			E_{\rm in}=\|(\partial_{x}\omega_{2,\rm in})_{\neq}\|_{L^{2}}+\|(\triangle u_{\rm in})_{\neq}\|_{L^{2}}+\|n_{\rm in,\neq}\|_{L^{2}}.
		\end{aligned}
	\end{equation*}
	Let's designate $T$ as the terminal point of the largest range $[0, T]$ such that the following
	hypothesis hold
	\begin{align}
		E(t)\leq 2E_0, \label{assumption_0} \\
		||n||_{L^{\infty}L^{\infty}}\leq 2E_{1}, \label{assumption_1}
	\end{align}
	for any $t\in[0, T]$, where $ E_{0} $ and $E_1$ will be determined during the calculation.
	
	The following propositions are key to obtaining the main results. Combining them with the local well-posedness of the system (\ref{ini2})-(\ref{ini2_1}), we can deduce the global existence of the solution.
	\begin{proposition}\label{prop 1}
		Assume that the initial date $n_{\rm in}\in H^2(\mathbb{T}\times\mathbb{I}\times\mathbb{T})$ and $u_{\rm in}\in H^2(\mathbb{T}\times\mathbb{I}\times\mathbb{T})$, under the conditions of (\ref{u small}), (\ref{assumption_0}) and (\ref{assumption_1}), there exist a positive constant $ E_{0} $ depending on $ E_{\rm in}, $ and a positive constant $ D_{1}$ depending on $E_{0}, E_{1}, M $ and $\|u_{\rm in,0}\|_{H^{1}} $, such that if $ A\geq D_{1}, $ there holds
		\begin{equation*}
			\begin{aligned}
				E(t)&\leq E_{0},
			\end{aligned}
		\end{equation*}
		for all $ t\in[0,T]. $
	\end{proposition}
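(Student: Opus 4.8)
\emph{Proof idea.} This is a bootstrap (continuity) argument: assuming \eqref{assumption_0} and \eqref{assumption_1} on $[0,T]$, the goal is to upgrade $E(t)\leq 2E_0$ to the strict bound $E(t)\leq E_0$, after choosing $E_0$ comparable to $E_{\rm in}$ and $A$ large. The energy splits into the three pieces $\|\partial_x\omega_{2,\neq}\|_{Y_a}$, $\|n_{\neq}\|_{Y_a}$ and $\|u_{2,\neq}\|_{X_a}$, so I would estimate each separately. A preliminary reduction is that the remaining non-zero velocity components are algebraically recovered from $u_{2,\neq}$ and $\omega_{2,\neq}$: from $\omega_2=\partial_zu_1-\partial_xu_3$ together with $\nabla\cdot u=0$, in frequency space $(u_1^{k_1,k_3},u_3^{k_1,k_3})$ is obtained from $(\partial_yu_2^{k_1,k_3},\omega_2^{k_1,k_3})$ after inverting a $2\times2$ system with determinant $\eta^2$. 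Hence control of the three quantities in $E(t)$ controls the whole non-zero-mode velocity field, with the derivative gains encoded in $X_a,Y_a$.

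First I would apply the linear space--time estimates of Section~\ref{sec 3} to each Fourier mode of \eqref{ini3}. For $n_{\neq}$ and $\partial_x\omega_{2,\neq}$ these give control of the $Y_a$-norm by the initial data plus a suitable dual norm of the forcing, while for $u_{2,\neq}$ the fourth-order estimate for $\widehat\triangle u_2$ controls the $X_a$-norm. Summing over $(k_1,k_3)\neq 0$ and invoking \eqref{X_a}--\eqref{Y_a}, this produces a schematic inequality
\[
E(t)\;\lesssim\;E_{\rm in}\;+\;\big(\text{nonlinear forcing}\big)\;+\;\big(\text{coupling terms}\big),
\]
where the forcing collects $\tfrac1A\nabla\cdot(un)_{\neq}$, $\tfrac1A\nabla\cdot(n\nabla c)_{\neq}$, $\tfrac1A\partial_zn$, $\tfrac1A\partial_y\partial_xn$, and the coupling is the $O(1)$ term $-\partial_zu_2$ in the $\omega_2$-equation.

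Next I would bound the forcing. The terms carrying an explicit factor $A^{-1}$ are handled by the mode decompositions \eqref{zero mode}--\eqref{non-zero mode}, Hölder and Sobolev inequalities, elliptic regularity for $c$ via $\triangle c+n-c=0$, and the bootstrap hypotheses: products of non-zero modes are absorbed using $E(t)\leq 2E_0$, and factors of $n_0$ are controlled by the conserved mass $M$ and by $\|n\|_{L^\infty L^\infty}\leq 2E_1$ from \eqref{assumption_1}. The zero-mode velocity entering $\tfrac1A\nabla\cdot(un)$ requires its own estimate: the zero mode $u_{1,0}$ is driven by the lift-up source proportional to $A\,u_{2,0}$, so I would first show $u_{2,0},u_{3,0}$ remain small using \eqref{u small}, and then bound $u_{1,0}$ in terms of $\|u_{\rm in,0}\|_{H^1}$ and $n_0$; this is exactly where the constant $C_0$ and the dependence of $D_1$ on $\|u_{\rm in,0}\|_{H^1}$ enter. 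The genuinely delicate term is the $O(1)$ coupling $-\partial_zu_2$, which carries no smallness in $A^{-1}$ and must instead be dominated by the enhanced-dissipation gain built into $X_a$ and $Y_a$; this is precisely where the stabilizing effect of the Couette flow is exploited.

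Finally I would close the argument. Collecting the above yields a bound of the form $E(t)\leq C E_{\rm in}+C A^{-\sigma}\big(E_0^2+E_0E_1+\cdots\big)$ for some $\sigma>0$, with the coupling contribution already absorbed by enhanced dissipation. Setting $E_0=2C E_{\rm in}$ (so $E_0$ depends only on $E_{\rm in}$) and then taking $A\geq D_1$ large enough, depending on $E_0,E_1,M$ and $\|u_{\rm in,0}\|_{H^1}$, makes the remaining contributions at most $E_0/2$, giving $E(t)\leq E_0$ on $[0,T]$. I expect the main obstacle to be the simultaneous treatment of the quadratic chemotactic and transport nonlinearities in the anisotropic norms with the \emph{correct} powers of $A$, together with disentangling the three coupled quantities; in particular, matching the $O(1)$ term $-\partial_zu_2$ and the lift-up growth of $u_1$ to the dual norms implicit in the $X_a/Y_a$ estimates, so that the bootstrap actually closes, is the crux of the computation.
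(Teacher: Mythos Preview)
Your proposal is correct and follows essentially the same route as the paper: apply the linear space--time estimates (Propositions~\ref{slip} and~\ref{noslip} in the Appendix, not Section~\ref{sec 3}) to the three pieces of $E(t)$, decompose the quadratic forcing via \eqref{zero mode}--\eqref{non-zero mode}, control $u_0$ through Corollary~\ref{corollary_1}, and absorb the $O(1)$ coupling $-\partial_z u_2$ using the $f_4$-slot of Proposition~\ref{slip}, which after multiplying by $k_1^2$ is exactly bounded by $\|u_{2,\neq}\|_{X_a}^2$. The paper obtains the explicit exponent $\sigma=\tfrac{1}{12}$ and sets $E_0=C(E_{\rm in}+1)$, $D_1=(E_0^2+E_1^2+M^2+H_1^2)^{12}$, but your closure scheme is the same.
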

	
	\begin{proposition}\label{prop 2}
		Assume that the initial date $n_{\rm in}\in H^2(\mathbb{T}\times\mathbb{I}\times\mathbb{T})$, $u_{\rm in}\in H^2(\mathbb{T}\times\mathbb{I}\times\mathbb{T})$ and $ C_{*}^{3}M<1 $, under the conditions of (\ref{u small}), (\ref{assumption_0}) and (\ref{assumption_1}), there exists a positive constant $ E_{1} $ depending on $ E_{0}, M, \|u_{\rm in}\|_{H^{1}} $ and $ \|n_{\rm in}\|_{L^{2}\cap L^{\infty}}, $ such that 
		\begin{equation*}
			\begin{aligned}
				\|n\|_{L^{\infty}L^{\infty}}&\leq E_{1},
			\end{aligned}
		\end{equation*}
		for all $ t\in[0,T]. $
	\end{proposition}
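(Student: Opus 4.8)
The plan is to improve the bootstrap assumption \eqref{assumption_1} to $\|n\|_{L^\infty L^\infty}\le E_1$ by running a chain of $L^p$ energy estimates whose single borderline term is controlled by the mass hypothesis $C_*^3M<1$. I would first record two structural facts. Since $n_{\rm in}\ge 0$ and the first equation of \eqref{ini2} can be written as $\partial_t n+(y\partial_x+\tfrac1A u\cdot\nabla+\tfrac1A\nabla c\cdot\nabla)n-\tfrac1A\triangle n=\tfrac1A n(n-c)$, whose reaction term vanishes at $n=0$, the maximum principle gives $n(t,\cdot)\ge 0$ and hence $\|n(t)\|_{L^1}=M$; together with $\triangle c=c-n$ and the zero boundary data this also forces $c\ge 0$. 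These let me discard the sign-definite terms that appear below.

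For the $L^2$ estimate, testing the $n$-equation against $n$ kills the Couette transport and the incompressible advection $-\tfrac1A\nabla\cdot(un)$ (using $\nabla\cdot u=0$ and the boundary conditions), while the chemotactic term, after substituting $\triangle c=c-n$, produces
\begin{equation*}
\frac12\frac{d}{dt}\|n\|_{L^2}^2+\frac1A\|\nabla n\|_{L^2}^2=\frac{1}{2A}\int n^3-\frac{1}{2A}\int c\,n^2\le\frac{1}{2A}\int n^3.
\end{equation*}
The cubic term is the crux. I would split $n=n_0+n_{\neq}$ and, within the $x$-average, $n_0=n_{(0,0)}+n_{(0,\neq)}$. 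The genuinely critical contribution is $\int n_{(0,\neq)}^3$: applied to $n_{(0,\neq)}(y,z)$, which vanishes at $y=\pm1$ and has zero $z$-average, the sharp Sobolev inequality of Theorem \ref{result} yields $\int n_{(0,\neq)}^3\le C_*^3 M\,\|\nabla n_{(0,\neq)}\|_{L^2}^2$, so $C_*^3M<1$ lets it be absorbed into $\tfrac1A\|\nabla n\|_{L^2}^2$ with a positive margin. The purely $y$-dependent mode $n_{(0,0)}$ lives in one dimension, where the model is subcritical, so $\int n_{(0,0)}^3$ and the cross term $\int n_{(0,0)}n_{(0,\neq)}^2$ are controlled by one-dimensional interpolation and the conserved mass. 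Every remaining piece of $\int n^3$ carries a factor of $n_{\neq}$ and is estimated through the enhanced-dissipation bounds of Proposition \ref{prop 1}, i.e. \eqref{X_a}, \eqref{Y_a} and $E(t)\le 2E_0$, which supply smallness in negative powers of $A$. The zero-mode transport $\tfrac1A\partial_y(u_{2,0}n_0)+\tfrac1A\partial_z(u_{3,0}n_0)$, which enjoys no enhanced dissipation, is controlled using the smallness hypothesis \eqref{u small}: since the linearized velocity equation forces only the first component by $n$, the components $u_{2,0},u_{3,0}$ merely decay from their small initial data, and the dependence on $\|u_{\rm in}\|_{H^1}$ enters here. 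Taking $A$ large then closes an $L^\infty_tL^2\cap L^2_t\dot H^1$ bound on $n$.

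To pass from $L^2$ to $L^\infty$, I would iterate in $p$: testing against $n^{p-1}$ gives the analogous identity with dissipation $\frac{4(p-1)}{Ap^2}\|\nabla n^{p/2}\|_{L^2}^2$ and nonlinearity $\frac{p-1}{Ap}\int n^{p+1}$. Decomposing $\int n^{p+1}$ as above, applying Gagliardo--Nirenberg to $n^{p/2}$, and again absorbing the two-dimensional zero mode via the mass condition and the off-modes via enhanced dissipation, I would obtain $L^p$ bounds with explicitly tracked $p$-dependence. A Moser/Alikakos iteration, or a Duhamel argument using the $L^p$--$L^q$ smoothing of the rescaled heat semigroup $e^{\frac{t}{A}\triangle}$ once $n$ sits in a supercritical integrability class, then yields $\|n\|_{L^\infty L^\infty}\le E_1$, with $E_1$ depending only on $E_0$, $M$, $\|u_{\rm in}\|_{H^1}$ and $\|n_{\rm in}\|_{L^2\cap L^\infty}$.

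The main obstacle is exactly the cubic term $\int n^3$, which is supercritical in three dimensions and is what drives finite-time blow-up in the uncoupled PKS model. The mechanism that saves the estimate is a separation of scales: the $x$-averaged density behaves like a two-dimensional Keller--Segel profile, for which $\int n_0^3$ is critical and tamed by $C_*^3M<1$, while every $x$-oscillating contribution is damped by the Couette-induced enhanced dissipation and carries compensating negative powers of $A$. The delicate bookkeeping is to verify that all cross terms and the fluid-transport contributions can be made small by choosing $A$ large (consistently with \eqref{u small} and \eqref{assumption_0}), so that the only borderline term remaining is the critical two-dimensional one; keeping track of the exact constants so that the sharp threshold $C_*^3M<1$ is preserved, and keeping the $p$-dependence explicit so that the final $L^\infty$ constant stays finite as $p\to\infty$, are the remaining technical points requiring care.
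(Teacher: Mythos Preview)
Your strategy matches the paper's at the crucial point: both identify $\int n_{(0,\neq)}^3$ as the single borderline term and absorb it via the sharp two-dimensional inequality together with $C_*^3M<1$, while relegating every piece carrying $n_{\neq}$ to the enhanced-dissipation budget. The executions, however, differ in two respects.

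First, for the $L^2$ estimate you test the full $n$-equation against $n$ and then decompose $\int n^3$. The paper instead writes \emph{separate} equations for $n_{(0,0)}$ and $n_{(0,\neq)}$ (Section~\ref{sec 5}) and runs a coupled ODE comparison argument of the type used for \eqref{omega}. This matters because cross terms such as $\int n_{(0,0)}\,n_{(0,\neq)}^2$ leave residues like $M^{1/2}\|n_{(0,\neq)}\|_{L^2}^3$ that cannot be closed by Gr\"onwall alone; the paper absorbs them through a Nash inequality and a pointwise-in-time comparison against a cubic ODE. Your sketch says these cross terms are ``controlled by one-dimensional interpolation and the conserved mass,'' which is true scaling-wise but hides precisely this closure step.

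Second, and more significantly, for the passage from $L^2$ to $L^\infty$ the paper takes a much simpler route than you propose. It does \emph{not} substitute $\triangle c=c-n$ at higher $p$, does not decompose $\int n^{p+1}$ by modes, and does not invoke the mass condition again. Instead, once $\|n_0\|_{L^\infty L^2}\le H_2$ and $\|n_{\neq}\|_{L^\infty L^2}\le 2E_0$ are in hand, Lemma~\ref{ellip_0} and Lemma~\ref{ellip_2} give $\|\nabla c\|_{L^\infty L^4}\le C(E_0+H_2)$, and the Moser--Alikakos iteration proceeds directly in three dimensions via $\|n^p\nabla c\|_{L^2}\le\|n^p\|_{L^4}\|\nabla c\|_{L^4}$ and the $3$D Nash inequality. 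Your proposal to keep decomposing and to ``absorb the two-dimensional zero mode via the mass condition'' at each $p$ could be made to work (for $p>2$ the zero mode is genuinely subcritical, so the mass threshold is not actually needed there), but it is more laborious and the $p$-dependent constants would require extra care. The paper's approach buys a clean, mode-independent iteration once the $L^2$ level is closed.
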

	\begin{proof}[Proof of Theorem \ref{result}]
		Taking $ D_{0}=\max\{D_{1}, D_{2}\} $ and combining {\textbf{Proposition \ref{prop 1}}} and {\textbf{Proposition \ref{prop 2}}}, we complete the proof.	
	\end{proof}
	
	\section{A Priori estimates of $c$ and zero mode of $u$}\label{sec 3}

	\subsection{Elliptic estimates}
	We estimate $c$ by elliptic energy method.
	\begin{lemma}\label{ellip_0}
		Let $c_0$ and $n_{0}$ be the zero mode of $c$ and $n$, respectively, satisfying
		$$-\triangle c_0+c_0=n_{0},\quad c_{0}|_{y=\pm 1}=0.$$
		Then there hold
		\begin{align}
			\|\triangle c_0(t)\|_{L^2}+\|\nabla c_0(t)\|_{L^2}
			\leq C\|n_{0}(t)\|_{L^2},\nonumber 
		\end{align}
		and
		$$\|\nabla c_0(t)\|_{L^4}\leq C\|n_{0}(t)\|_{L^2},$$
		for any $t\geq0$.
	\end{lemma}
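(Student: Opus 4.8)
The plan is to treat the two estimates separately, starting from the basic energy identity and then upgrading to $H^2$ regularity and an $L^4$ gradient bound. Throughout, observe that $c_0$ and $n_0$ depend only on $(y,z)$, so we are working on the two-dimensional cylinder $\mathbb{I}\times\mathbb{T}$ with a \emph{flat} boundary $\{y=\pm1\}$ on which $c_0$ vanishes, periodicity taking care of the $z$-direction. First I would multiply the equation $-\triangle c_0+c_0=n_0$ by $c_0$ and integrate over $\mathbb{I}\times\mathbb{T}$; since $c_0|_{y=\pm1}=0$, the boundary terms in $\int(-\triangle c_0)c_0=\int|\nabla c_0|^2$ vanish, yielding
\begin{equation*}
	\|\nabla c_0\|_{L^2}^2+\|c_0\|_{L^2}^2=\int n_0 c_0\leq\|n_0\|_{L^2}\|c_0\|_{L^2}.
\end{equation*}
This gives at once $\|c_0\|_{L^2}\leq\|n_0\|_{L^2}$ and then $\|\nabla c_0\|_{L^2}\leq\|n_0\|_{L^2}$. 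The Laplacian bound is then immediate from the equation itself: rewriting $\triangle c_0=c_0-n_0$ gives $\|\triangle c_0\|_{L^2}\leq\|c_0\|_{L^2}+\|n_0\|_{L^2}\leq2\|n_0\|_{L^2}$, which proves the first displayed inequality.

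For the $L^4$ estimate I would first promote the Laplacian bound to a full Hessian bound. The plan is to expand $c_0$ in a Fourier series in $z$, $c_0=\sum_k\hat c_0(y;k)\mathrm{e}^{ikz}$, and compute $\|\triangle c_0\|_{L^2}^2$ mode by mode. Integrating by parts in $y$ in the cross term $-2k^2\mathrm{Re}\int\partial_y^2\hat c_0\,\overline{\hat c_0}\,dy$ and using $\hat c_0(\pm1;k)=0$ to kill the boundary contribution, one finds that each mode satisfies
\begin{equation*}
	\int_{-1}^1\bigl(|\partial_y^2\hat c_0|^2+2k^2|\partial_y\hat c_0|^2+k^4|\hat c_0|^2\bigr)\,dy,
\end{equation*}
which is exactly $\|\nabla^2 c_0\|_{L^2}^2$ summed over $k$. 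Hence $\|\nabla^2 c_0\|_{L^2}=\|\triangle c_0\|_{L^2}\leq2\|n_0\|_{L^2}$; the flatness of the boundary is what makes this identity, rather than merely an inequality, hold cleanly.

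Finally I would apply the two-dimensional Gagliardo--Nirenberg (Ladyzhenskaya-type) inequality $\|g\|_{L^4}\leq C\|g\|_{L^2}^{1/2}\|g\|_{H^1}^{1/2}$ on $\mathbb{I}\times\mathbb{T}$ to $g=\nabla c_0$ componentwise, obtaining
\begin{equation*}
	\|\nabla c_0\|_{L^4}\leq C\|\nabla c_0\|_{L^2}^{1/2}\bigl(\|\nabla c_0\|_{L^2}+\|\nabla^2 c_0\|_{L^2}\bigr)^{1/2}\leq C\|n_0\|_{L^2},
\end{equation*}
where both factors are controlled by the earlier bounds. The only delicate point is the $H^2$ step: one must verify that the integration-by-parts in the Hessian identity produces no surviving boundary contribution, and this is precisely where the Dirichlet condition together with the flat geometry of the channel is used. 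All remaining ingredients are standard, so I expect no substantial obstacle beyond this bookkeeping.
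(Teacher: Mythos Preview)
Your proposal is correct and follows essentially the same route as the paper: an energy estimate for $\|c_0\|_{L^2}$, $\|\nabla c_0\|_{L^2}$, and $\|\triangle c_0\|_{L^2}$, followed by the two-dimensional Gagliardo--Nirenberg inequality applied to $\nabla c_0$, with the identity $\|\nabla^2 c_0\|_{L^2}=\|\triangle c_0\|_{L^2}$ justified by the Dirichlet condition on the flat boundary. The only cosmetic differences are that the paper bundles the three $L^2$ bounds into a single ``basic energy estimate'' line and states the Hessian--Laplacian identity without your explicit Fourier-in-$z$ verification, while you obtain $\|\triangle c_0\|_{L^2}$ directly from the equation rather than by testing with $-\triangle c_0$.
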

	\begin{proof}
		The basic energy estimates yield
		\begin{equation}
			\begin{aligned}
				\|\triangle c_0(t)\|^2_{L^2}+\|\nabla c_0(t)\|^2_{L^2}+\|c_0(t)\|^2_{L^2}
				\leq C\|n_{0}(t)\|^2_{L^2},
				\nonumber
			\end{aligned}
		\end{equation}
		which indicates
		$$\|\triangle c_0(t)\|_{L^2}+\|\nabla c_0(t)\|_{L^2}\leq C\|n_{0}(t)\|_{L^2}. $$
		Furthermore, using the Gagliardo-Nirenberg inequality, we have	
		$$\|\nabla c_0(t)\|_{L^4}\leq
		C\|\triangle c_0(t)\|^{\frac{1}{2}}_{L^2}\|\nabla c_0(t)\|^{\frac{1}{2}}_{L^2}+C\|\nabla c_{0}(t)\|_{L^{2}}
		\leq C\|n_{0}(t)\|_{L^2},$$
		since $\|\triangle c_0(t)\|_{L^2}=\|\nabla^2 c_0(t)\|_{L^2}$ due to the boundary condition of $c_0$.
	\end{proof}

	\begin{lemma}\label{ellip_2}
		Let $c_{\neq}$ and $n_{\neq}$ be the non-zero mode of $c$ and $n$,
		respectively, satisfying
		$$-\triangle c_{\neq}+c_{\neq}=n_{\neq},\quad c_{\neq}|_{y=\pm 1}=0.$$
		Then there hold
		\begin{align}
			\|\triangle c_{\neq}(t&)\|_{L^2}
			+\|\nabla c_{\neq}(t)\|_{L^2}\leq C\|n_{\neq}(t)\|_{L^2},\nonumber
		\end{align}
		and
		\begin{equation}
			\|\nabla c_{\neq}(t)\|_{L^4}\leq C\|n_{\neq}(t)\|_{L^2},\nonumber
		\end{equation}
		for any $t\geq0$.
	\end{lemma}
	\begin{proof}
		By integrating by parts, we have
		\begin{equation}
			\begin{aligned}
				\|\triangle c_{\neq}(t)\|^2_{L^2}+\|\nabla c_{\neq}(t)\|^2_{L^2}
				+\|c_{\neq}(t)\|^2_{L^2}
				&\leq C\|n_{\neq}(t)\|^2_{L^2}.
				\nonumber
			\end{aligned}
		\end{equation}
		Using the Gagliardo-Nirenberg inequality, we obtain
		$$\|\nabla c_{\neq}(t)\|_{L^4}\leq
		C\| c_{\neq}(t)\|^{\frac{1}{8}}_{L^2}
		\|\triangle c_{\neq}(t)\|^{\frac{7}{8}}_{L^2}+C\|c_{\neq}(t)\|_{L^{2}}
		\leq C\|n_{\neq}(t)\|_{L^2}.$$
	\end{proof}

	\subsection{A Priori estimates for zero mode of $u$}

	Before starting, we first prove two embedding inequalities for non-zero modulus functions.
	\begin{lemma}\label{lemma_0}
		Let $f$ be a function such that $f_{\neq}\in H^1(\mathbb{T}\times\mathbb{I}\times \mathbb{T})$, there holds
		$$||f_{\neq}||_{L^2(\mathbb{T}\times\mathbb{I}\times \mathbb{T})}
		\leq C ||\partial_xf_{\neq}||_{L^2(\mathbb{T}\times\mathbb{I}\times \mathbb{T})}\leq C ||\nabla f_{\neq}||_{L^2(\mathbb{T}\times\mathbb{I}\times \mathbb{T})}.$$
	\end{lemma}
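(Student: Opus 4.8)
The plan is to exploit the defining property of the non-zero mode: since $f_{\neq}$ carries only Fourier frequencies $k_1\neq 0$ in the periodic variable $x$, there is a spectral gap in that direction which immediately yields a Poincaré-type inequality (in fact with optimal constant $C=1$).

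First I would expand $f_{\neq}$ in the Fourier series introduced in the notations,
$$f_{\neq}(x,y,z)=\sum_{k_1\neq 0,\,k_3\in\mathbb{Z}}f^{k_1,k_3}(y)\,e^{i(k_1x+k_3z)},$$
where the sum runs only over $k_1\neq 0$ precisely because $P_0f_{\neq}=0$. Applying Plancherel's identity in $(x,z)$ on $\mathbb{T}\times\mathbb{T}$ and integrating in $y$ over $\mathbb{I}$ gives
$$\|f_{\neq}\|_{L^2}^2=|\mathbb{T}|^2\sum_{k_1\neq 0,\,k_3}\int_{\mathbb{I}}|f^{k_1,k_3}(y)|^2\,dy,\qquad \|\partial_x f_{\neq}\|_{L^2}^2=|\mathbb{T}|^2\sum_{k_1\neq 0,\,k_3}k_1^2\int_{\mathbb{I}}|f^{k_1,k_3}(y)|^2\,dy.$$
Since $k_1$ is a nonzero integer, $k_1^2\geq 1$ in every surviving term, so a termwise comparison yields $\|f_{\neq}\|_{L^2}^2\leq\|\partial_x f_{\neq}\|_{L^2}^2$, which is the first asserted inequality. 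The second inequality is entirely elementary: as $\partial_x$ is one Cartesian component of $\nabla$, we have $|\partial_x f_{\neq}|^2\leq|\nabla f_{\neq}|^2$ pointwise, and integrating gives $\|\partial_x f_{\neq}\|_{L^2}\leq\|\nabla f_{\neq}\|_{L^2}$.

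There is no genuine obstacle here; the only point requiring care is the justification of the spectral gap, namely that the zero-average condition in $x$ removes exactly the $k_1=0$ modes, so that the smallest admissible value of $k_1^2$ is $1$. Equivalently, one could avoid Fourier series and argue directly: for each fixed $(y,z)$ the slice $x\mapsto f_{\neq}(x,y,z)$ has zero mean on $\mathbb{T}$, so the one-dimensional Wirtinger inequality on the circle $[0,2\pi)$ applies, and integrating over $(y,z)\in\mathbb{I}\times\mathbb{T}$ recovers the same estimate; the optimal Wirtinger constant again reflects the gap $k_1^2\geq 1$. I expect the Fourier computation to be the cleanest route, since it makes the gap explicit and isolates the role of the non-zero mode.
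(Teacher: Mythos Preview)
Your proof is correct and is essentially the same approach as the paper's: the paper simply invokes Poincar\'{e}'s inequality and omits the details, while you have spelled out the Fourier/spectral-gap argument that underlies that Poincar\'{e} inequality for zero-mean functions on $\mathbb{T}$.
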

	\begin{proof} It follows from Poincar\'{e}'s inequality immediately and we omit it.
	\end{proof}

	\begin{lemma}[]\label{lemma_delta_u}
		Assume that $u_{\neq}\in H^{2}(\mathbb{T}\times\mathbb{I}\times\mathbb{T}),$  there hold
		$$\left\|\left(
		\begin{array}{c}
			\partial_x \\
			\partial_z \\
		\end{array}
		\right)u_{\neq}\right\|_{L^2}\leq C(\|\omega_{2,\neq}\|_{L^2}
		+\|\nabla u_{2,\neq}\|_{L^2}),$$
		and
		$$\left\|\left(
		\begin{array}{c}
			\partial_x^2 \\
			\partial_z^2 \\
		\end{array}
		\right)u_{\neq}
		\right\|_{L^2}\leq C\left(\left\|\left(
		\begin{array}{c}
			\partial_x \\
			\partial_z \\
		\end{array}
		\right)\omega_{2,\neq}\right\|_{L^2}+\|\triangle u_{2,\neq}\|_{L^2}\right).$$
	\end{lemma}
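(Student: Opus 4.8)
My plan is to pass to Fourier variables in $(x,z)$ and reduce both inequalities to pointwise-in-$(k_1,k_3)$ bounds on the $y$-profiles, then sum over frequencies via Plancherel. For a non-zero mode we have $k_1\neq0$, hence $\eta^2=k_1^2+k_3^2>0$. The incompressibility relation $ik_1u_1^{k_1,k_3}+\partial_yu_2^{k_1,k_3}+ik_3u_3^{k_1,k_3}=0$ together with the vorticity definition $\omega_2^{k_1,k_3}=ik_3u_1^{k_1,k_3}-ik_1u_3^{k_1,k_3}$ form a $2\times2$ linear system for $(u_1^{k_1,k_3},u_3^{k_1,k_3})$ whose determinant equals $\eta^2$. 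Solving it gives
$$u_1^{k_1,k_3}=\frac{i}{\eta^2}\left(k_1\partial_yu_2^{k_1,k_3}-k_3\omega_2^{k_1,k_3}\right),\qquad u_3^{k_1,k_3}=\frac{i}{\eta^2}\left(k_1\omega_2^{k_1,k_3}+k_3\partial_yu_2^{k_1,k_3}\right).$$
This algebraic identity is the heart of the argument: it expresses the horizontal components solely through $\omega_2$ and $\partial_yu_2$.

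For the first estimate I would apply $\partial_x\leftrightarrow ik_1$ and $\partial_z\leftrightarrow ik_3$ to these formulas. Each resulting expression is a linear combination of $\partial_yu_2^{k_1,k_3}$ and $\omega_2^{k_1,k_3}$ with Fourier multipliers of the form $k_ak_b/\eta^2$, all bounded by $1$ since $|k_1|,|k_3|\leq\eta$. The contribution of $u_2$ itself is immediate, as $\partial_xu_2$ and $\partial_zu_2$ are already components of $\nabla u_2$. Summing the squared $L^2_y$ norms over all $(k_1,k_3)$ with $k_1\neq0$ and invoking Plancherel then yields the claim, using $\|\partial_yu_{2,\neq}\|_{L^2}\leq\|\nabla u_{2,\neq}\|_{L^2}$.

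The second estimate is the delicate one and the main obstacle. Applying $\partial_x^2\leftrightarrow-k_1^2$ and $\partial_z^2\leftrightarrow-k_3^2$ produces two kinds of terms. The $\omega_2$-terms carry multipliers $k_a^2k_b/\eta^2$ of magnitude $\leq|k_b|$, hence are absorbed by $|k_b\omega_2^{k_1,k_3}|$, i.e.\ by $(\partial_x,\partial_z)\omega_{2,\neq}$. The $\partial_yu_2$-terms, however, carry multipliers of order $k^3/\eta^2\leq\eta$, and so generate quantities of size $\eta\,|\partial_yu_2^{k_1,k_3}|$, which are not directly among the admissible right-hand sides. To control them I would use a Poincar\'e-type identity obtained by integrating by parts in $y$ and using \emph{both} boundary conditions $u_2=\partial_yu_2=0$ at $y=\pm1$, namely
$$\|\widehat{\triangle}u_2^{k_1,k_3}\|_{L^2_y}^2=\|\partial_y^2u_2^{k_1,k_3}\|_{L^2_y}^2+2\eta^2\|\partial_yu_2^{k_1,k_3}\|_{L^2_y}^2+\eta^4\|u_2^{k_1,k_3}\|_{L^2_y}^2,$$
whose three summands are all nonnegative. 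This yields $\eta\,\|\partial_yu_2^{k_1,k_3}\|_{L^2_y}\leq C\,\|\widehat{\triangle}u_2^{k_1,k_3}\|_{L^2_y}$ and $\eta^2\,\|u_2^{k_1,k_3}\|_{L^2_y}\leq\|\widehat{\triangle}u_2^{k_1,k_3}\|_{L^2_y}$.

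With these bounds in hand, every $\partial_yu_2$-term is dominated by $\|\widehat{\triangle}u_2^{k_1,k_3}\|_{L^2_y}=\|(\triangle u_2)^{k_1,k_3}\|_{L^2_y}$, while the $u_2$-contribution $k_a^2u_2^{k_1,k_3}$ is likewise controlled because $k_a^2\leq\eta^2$. Summing over all non-zero frequencies via Plancherel then gives the second inequality. The only subtlety to watch is that the conversion of $\eta\,\partial_yu_2$ into $\triangle u_2$ genuinely requires the vanishing of both $u_2$ and $\partial_yu_2$ at the walls, so that the boundary term in the integration by parts drops; this is exactly the non-slip condition recorded in \eqref{ini2_1}.
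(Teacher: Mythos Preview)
Your proof is correct and follows essentially the same idea as the paper, which invokes the div--curl formula (citing \cite{Von Wahl} and \cite{Chen0}) applied to the pair $(u_{1,\neq},u_{3,\neq})$ in the $(x,z)$-variables, using $\partial_x u_{1,\neq}+\partial_z u_{3,\neq}=-\partial_y u_{2,\neq}$ and $\partial_z u_{1,\neq}-\partial_x u_{3,\neq}=\omega_{2,\neq}$; your explicit Fourier inversion of the $2\times 2$ system is precisely the frequency-side proof of that div--curl estimate on $\mathbb{T}^2$. For the second inequality the paper just writes ``similarly'', whereas you spell out the key point that $\eta\,\|\partial_y u_2^{k_1,k_3}\|_{L^2_y}\leq C\|\widehat{\triangle}u_2^{k_1,k_3}\|_{L^2_y}$, which amounts to the identity $\|\triangle u_{2,\neq}\|_{L^2}=\|\nabla^2 u_{2,\neq}\|_{L^2}$ on this flat-boundary domain. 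One minor remark: the integration-by-parts boundary term is $[\partial_y u_2^{k_1,k_3}\,\overline{u_2^{k_1,k_3}}]_{-1}^{1}$, which already vanishes from $u_2|_{y=\pm1}=0$ alone; the additional condition $\partial_y u_2|_{y=\pm1}=0$ is not actually needed at this step.
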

	
	\begin{proof}
		Using the div-curl formula (see, for example, \cite{Von Wahl} or \textbf{Lemma 5.4} in \cite{Chen0}) and $$\partial_x u_{1,\neq}+\partial_y u_{2,\neq}+\partial_z u_{3,\neq}=0,$$ we have
		\begin{equation}
			\begin{aligned}
				\left\|\left(
				\begin{array}{c c}
					\partial_xu_{1,\neq}&\partial_zu_{1,\neq} \\
					\partial_xu_{3,\neq}&\partial_zu_{3,\neq} \\
				\end{array}
				\right)\right\|_{L^2}&\leq C(\|\partial_zu_{1,\neq}-\partial_xu_{3,\neq}\|_{L^2}
				+\|\partial_xu_{1,\neq}+\partial_zu_{3,\neq}\|_{L^2}) \\
				&\leq C(\|\omega_{2,\neq}\|_{L^2}+\|\partial_yu_{2,\neq}\|_{L^2}),
			\end{aligned}
		\end{equation}	
		which yields
		$$\left\|\left(
		\begin{array}{c}
			\partial_x \\
			\partial_z \\
		\end{array}
		\right)u_{\neq}
		\right\|_{L^2}\leq C(\|\omega_{2,\neq}\|_{L^2}+\|\nabla u_{2,\neq}\|_{L^2}).$$
		Similarly, one can also obtain
		\begin{equation}
			\begin{aligned}
				\left\|\left(
				\begin{array}{c}
					\partial_x^2 \\
					\partial_z^2 \\
				\end{array}
				\right)u_{\neq}
				\right\|_{L^2}\leq C\left(\left\|\left(
				\begin{array}{c}
					\partial_x \\
					\partial_z \\
				\end{array}
				\right)\omega_{2,\neq}\right\|_{L^2}+\|\triangle u_{2,\neq}\|_{L^2}\right).	\nonumber
			\end{aligned}
		\end{equation}	
		The proof is complete.
	\end{proof}

	Next we aims to obtain the estimate of $ \|u_{0}\|_{L^{\infty}L^{4}} $ for the zero mode of $u$.  Before that, we first estimate $ \|u_{0}\|_{L^{\infty}L^{2}}, $	which is stated as follows.
	\begin{lemma}[Estimate of $\|u_{0}\|_{L^{\infty} L^2}$]\label{u_infty2_1}
		Under the assumption of (\ref{assumption_1}), if
		\begin{equation}\label{small u}
			A\left(||u_{2,0}(0)||_{L^2}+\|u_{3,0}(0)\|_{L^{2}} \right)\leq C,
		\end{equation}
		there hold
		\begin{equation}\label{u10 infty 2}
			\|u_{1,0}\|_{L^{\infty}L^2}\leq C(
			\|u_{1,0}(0)\|_{L^2}
			+E_{1}+M+1),
		\end{equation}
		and
		\begin{equation}\label{u20 infty 2}
			\|u_{2,0}\|_{L^{\infty}L^2}+\|u_{3,0}\|_{L^{\infty}L^{2}}\leq \|u_{2,0}(0)\|_{L^2}+\|u_{3,0}(0)\|_{L^{2}}.
		\end{equation}
	\end{lemma}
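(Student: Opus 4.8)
The plan is to derive the evolution equations for the three zero modes $u_{1,0},u_{2,0},u_{3,0}$ by applying the projection $P_0$ to the rescaled linearized momentum equation, and then to run $L^2$ energy estimates exploiting the dissipation together with the non-slip boundary data (\ref{boundary condition2}). Since $P_0(y\partial_x u)=y\partial_x u_0=0$, and since both the forcing $\frac1A(n,0,0)^{\top}$ and the lift-up term $(u_2,0,0)^{\top}$ touch only the first component, the transverse pair $(u_{2,0},u_{3,0})$ obeys a closed, forcing-free two-dimensional Stokes system
\begin{equation*}
\partial_t\begin{pmatrix}u_{2,0}\\u_{3,0}\end{pmatrix}-\frac1A\triangle\begin{pmatrix}u_{2,0}\\u_{3,0}\end{pmatrix}+\frac1A\begin{pmatrix}\partial_y\\\partial_z\end{pmatrix}P_0=0,\qquad \partial_yu_{2,0}+\partial_zu_{3,0}=0,
\end{equation*}
on $\mathbb{I}\times\mathbb{T}$, while $u_{1,0}$ satisfies the damped, forced heat equation
\begin{equation*}
\partial_tu_{1,0}-\frac1A\triangle u_{1,0}=-u_{2,0}+\frac1An_0,\qquad u_{1,0}|_{y=\pm1}=0 .
\end{equation*}

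For (\ref{u20 infty 2}) I would test the Stokes block against $(u_{2,0},u_{3,0})$. Integrating by parts and using $\partial_yu_{2,0}+\partial_zu_{3,0}=0$ together with $u_{2,0}|_{y=\pm1}=u_{3,0}|_{y=\pm1}=0$, the pressure term vanishes, leaving
\begin{equation*}
\frac12\frac{d}{dt}\left(\|u_{2,0}\|_{L^2}^2+\|u_{3,0}\|_{L^2}^2\right)+\frac1A\left(\|\nabla u_{2,0}\|_{L^2}^2+\|\nabla u_{3,0}\|_{L^2}^2\right)=0 .
\end{equation*}
Thus the combined energy is non-increasing, which yields (\ref{u20 infty 2}); moreover, Poincar\'e's inequality in $y$ (permissible because of the vanishing boundary data) upgrades this to exponential decay $\|u_{2,0}(t)\|_{L^2}^2+\|u_{3,0}(t)\|_{L^2}^2\leq e^{-2ct/A}\big(\|u_{2,0}(0)\|_{L^2}^2+\|u_{3,0}(0)\|_{L^2}^2\big)$ with an absolute constant $c$.

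For (\ref{u10 infty 2}) I would test the $u_{1,0}$ equation against $u_{1,0}$, crucially keeping its own dissipation. Poincar\'e's inequality gives $\frac1A\|\nabla u_{1,0}\|_{L^2}^2\geq \frac{c}{A}\|u_{1,0}\|_{L^2}^2$, so after dividing by $\|u_{1,0}\|_{L^2}$ and applying Duhamel's formula,
\begin{equation*}
\|u_{1,0}(t)\|_{L^2}\leq \|u_{1,0}(0)\|_{L^2}+\int_0^te^{-c(t-s)/A}\Big(\|u_{2,0}(s)\|_{L^2}+\tfrac1A\|n_0(s)\|_{L^2}\Big)\,ds .
\end{equation*}
Using $\int_0^te^{-c(t-s)/A}\,ds\leq A/c$ and the bound from the previous step, the $u_{2,0}$-contribution is at most $\frac{A}{c}\big(\|u_{2,0}(0)\|_{L^2}+\|u_{3,0}(0)\|_{L^2}\big)$, which is controlled by an absolute constant precisely through the smallness hypothesis (\ref{small u}). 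The $n_0$-contribution is handled by the same damping integral, $\frac1A\int_0^te^{-c(t-s)/A}\|n_0\|_{L^2}\,ds\leq \frac1c\sup_s\|n_0(s)\|_{L^2}$, combined with the interpolation $\|n_0\|_{L^2}\leq\|n_0\|_{L^1}^{1/2}\|n_0\|_{L^\infty}^{1/2}\leq (2ME_1)^{1/2}$, where I invoke Remark \ref{remark 1}, mass conservation $\|n_0\|_{L^1}\leq M$, and assumption (\ref{assumption_1}). Collecting terms and absorbing $(ME_1)^{1/2}\leq C(E_1+M+1)$ yields (\ref{u10 infty 2}).

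The main obstacle is the lift-up coupling in the $u_{1,0}$ equation: the transverse velocity $u_{2,0}$ decays only at the slow, purely diffusive rate $\sim A^{-1}$ (the zero mode enjoys no enhanced dissipation), so its time integral scales like $A\|u_{2,0}(0)\|_{L^2}$ and would destroy any $A$-uniform bound. The point is that this growth is exactly compensated by the $A$-weighted smallness assumption (\ref{small u}), namely $A(\|u_{2,0}(0)\|_{L^2}+\|u_{3,0}(0)\|_{L^2})\leq C$; balancing the $A^{-1}$ dissipation rate against this $A^{-1}$-small datum is the delicate step, and it explains why a condition on the transverse initial velocity must be imposed.
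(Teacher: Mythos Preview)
Your proposal is correct and follows essentially the same approach as the paper: you derive the same zero-mode system, run the identical pressure-canceling energy estimate for $(u_{2,0},u_{3,0})$, and for $u_{1,0}$ you exploit Poincar\'e dissipation against the forcing $\|u_{2,0}\|_{L^2}+\tfrac1A\|n_0\|_{L^2}$, closing with the interpolation $\|n_0\|_{L^2}\le (ME_1)^{1/2}$ and the hypothesis (\ref{small u}). The only cosmetic difference is that the paper resolves the resulting scalar differential inequality for $\|u_{1,0}\|_{L^2}$ by a contradiction/comparison argument (showing the solution cannot exceed a fixed multiple of $\|u_{1,0}(0)\|_{L^2}+\|n_0\|_{L^\infty L^2}+A\|u_{2,0}\|_{L^\infty L^2}$), whereas you integrate via Duhamel/Gr\"onwall; both yield the same bound and neither offers a real advantage over the other.
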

	
	\begin{proof}
		It follows from \eqref{ini1} that $ u_{0} $ satisfies
		\begin{equation}\nonumber
			\partial_tu_{0}+\left(
			\begin{array}{c}
				u_{2,0} \\
				0 \\
				0 \\
			\end{array}
			\right)-\frac{1}{A}\triangle u_{0}+\frac{1}{A}\nabla P_{0}=\frac{1}{A}\left(
			\begin{array}{c}
				n_{0} \\
				0 \\
				0 \\
			\end{array}
			\right),\quad u_{0}|_{y=\pm 1}=0,
		\end{equation}	
		thus we have
		\begin{equation}\label{u_zero}
			\left\{
			\begin{array}{lr}
				\partial_tu_{1,0}-\frac{1}{A}\triangle u_{1,0}+u_{2,0}=\frac{1}{A}n_{0}, \\
				\partial_tu_{2,0}-\frac{1}{A}\triangle u_{2,0}+\frac{1}{A}\partial_y P_0=0, \\
				\partial_tu_{3,0}-\frac{1}{A}\triangle u_{3,0}+\frac{1}{A}\partial_z P_0=0
			\end{array}
			\right.
		\end{equation} 
		with
		\begin{equation*}
			u_{1,0}|_{y=\pm 1}=0,\quad u_{2,0}|_{y=\pm1}=0,\quad u_{3,0}|_{y=\pm1 }=0.
		\end{equation*}
		Due to ${\rm div}~u=0$, there holds 
		\begin{equation}\label{div u=0}
			\partial_{y}u_{2,0}+\partial_{z}u_{3,0}=0.
		\end{equation}
		For the equations of $ (\ref{u_zero})_{2} $	and $ (\ref{u_zero})_{3}, $ using (\ref{div u=0}), the basic energy estimate yields that
		\begin{equation*}
			\begin{aligned}
				&\frac12\frac{d}{dt}\left(\|u_{2,0}\|_{L^{2}}^{2}+\|u_{3,0}\|_{L^{2}}^{2} \right)+\frac{1}{A}\left(\|\nabla u_{2,0}\|_{L^{2}}^{2}+\|\nabla u_{3,0}\|_{L^{2}}^{2} \right)\\=&\frac{1}{A}\int_{\mathbb{I}\times\mathbb{T}}P_{0}\left(\partial_{y}u_{2,0}+\partial_{z}u_{3,0} \right)dydz=0,
			\end{aligned}
		\end{equation*}
		which implies \eqref{u20 infty 2}.
	
	Next we deal with $ u_{1,0}. $ Multiplying 2$ u_{1,0} $ on $ (\ref{u_zero})_{1}$ and integrating with $ (y,z) $ over $ \mathbb{I}\times\mathbb{T}, $ we obtain
	$$\frac{d}{dt}||u_{1,{0}}||_{L^2}^{2}+\frac{2}{A}||\nabla u_{1,{0}}||_{L^2}^{2}
	\leq 2\frac{||n_{{0}}||_{L^2}+A||u_{2,0}||_{L^2}}{A}||u_{1,{0}}||_{L^2}.$$
	Using  Poincar$\rm \acute{e}$'s inequality 
	\begin{equation*}
		\begin{aligned}
			\|u_{1,{0}}\|_{L^2}^2\leq C\|\nabla u_{1,{0}}\|^2_{L^2},
		\end{aligned}
	\end{equation*}
	we have
	\begin{equation}\label{result_0_0}
		\frac{d}{dt}||u_{1,{0}}||^2_{L^2}
		\leq-\frac{2\|u_{1,{0}}\|_{L^{2}}}{CA}\Big(||u_{1,{0}}||_{L^2}-C(||n_{{0}}||_{L^2}+A||u_{2,0}||_{L^2})\Big).
	\end{equation}
	This implies that
	\begin{equation}\label{omega}
		\|u_{1,{0}}(t)\|_{L^2}\leq 2C(\|u_{1,{0}}(0)\|_{L^2}+||n_{0}||_{L^{\infty}L^2}+A||u_{2,{0}}||_{L^{\infty}L^2}),
	\end{equation}
	for any $t\geq0$.
	Otherwise, there must exist $t=\check{t}>0$, such that
	\begin{equation}
		\begin{aligned}
			\|u_{1,{0}}(\check{t})\|_{L^2}= 
			2C(\|u_{1,{0}}(0)\|_{L^2}+||n_{0}||_{L^{\infty}L^2}+A||u_{2,0}||_{L^{\infty}L^2}),
			\label{result_2}
		\end{aligned}
	\end{equation}
	and
	\begin{equation}
		\begin{aligned}
			\frac{d}{dt}\left( \|u_{1,{0}}({t})\|^2_{L^2}\right)|_{t=\check{t}}\geq0.
			\label{result_3}
		\end{aligned}
	\end{equation}
	According to (\ref{result_0_0}) and (\ref{result_2}), we have
	\begin{equation}
		\begin{aligned}
			&\frac{d}{dt}\left(\|u_{1,{0}}(t)\|^2_{L^2}\right)|_{t=\check{t}}\\\leq&-\frac{2\|u_{1,0}(\check{t})\|_{L^{2}}}{CA}\left(2C\|u_{1,0}(0)\|_{L^{2}}+C\|n_{0}\|_{L^{\infty}L^{2}}+CA\|u_{2,0}\|_{L^{\infty}L^{2}} \right)<0.
			\label{result_4}
		\end{aligned}
	\end{equation}
	A contradiction arises between (\ref{result_3}) and (\ref{result_4}). Thus (\ref{omega}) holds.
	
	Thus
	$$\|u_{1,{0}}\|_{L^{\infty}L^2}\leq C(\|u_{1,{0}}(0)\|_{L^2}+||n_{0}||_{L^{\infty}L^2}+A||u_{2,{0}}||_{L^{\infty}L^2}).$$		
	Using \eqref{small u}, (\ref{u20 infty 2}) and
	\begin{equation*}
		\|n_{0}\|_{L^{\infty}L^{2}}\leq \|n\|_{L^{\infty}L^{\infty}}^{\frac12}\|n\|_{L^{\infty}L^{1}}^{\frac12}\leq E_{1}^{\frac12}M^{\frac12},
	\end{equation*}
	we obtain
	\beno
	\|u_{1,0}\|_{L^{\infty}L^2}\leq C(\|u_{1,0}(0)\|_{L^2} 
	+E_{1}+M+1),	\label{u11_1}
	\eeno
	which is just \eqref{u10 infty 2}.	
	
	Hence the proof is complete.	
	
\end{proof}

Next, we give the estimate of $ \|u_{0}\|_{L^{\infty}L^{4}}. $
\begin{lemma}[Estimate of $||u_{0}||_{L^{\infty}L^{4}}$]\label{lemma_2_1}
	Under the conditions of {\textbf{Lemma \ref{u_infty2_1}}} and the assumption (\ref{assumption_1}), there hold 
	\begin{equation}\label{u10 infty infty}
		\|u_{1,0}\|_{L^{\infty}L^{4}}\leq C\left(\|u_{1,0}(0)\|_{L^{4}}+\|u_{1,0}(0)\|_{L^{2}}+E_{1}+M+1 \right),
	\end{equation}
	\begin{equation}\label{u20 infty infty}
		\|u_{2,0}\|_{L^{\infty}L^{4}}\leq C\left(\|u_{2,0}(0)\|_{H^{1}}+\|u_{3,0}(0)\|_{H^{1}}+1 \right),
	\end{equation}
	and 
	\begin{equation}\label{u30 infty infty}
		\|u_{3,0}\|_{L^{\infty}L^{4}}\leq C\left(\|u_{2,0}(0)\|_{H^{1}}+\|u_{3,0}(0)\|_{H^{1}}+1 \right).
	\end{equation}
\end{lemma}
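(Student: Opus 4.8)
The plan is to treat the transverse components $(u_{2,0},u_{3,0})$ and the streamwise component $u_{1,0}$ separately, and in both cases to reduce the $L^4$ bound to the $L^\infty L^2$ control already furnished by \textbf{Lemma \ref{u_infty2_1}} together with a gradient-type estimate, via the two-dimensional Gagliardo--Nirenberg inequality $\|f\|_{L^4(\mathbb{I}\times\mathbb{T})}\le C\|f\|_{L^2}^{1/2}\|\nabla f\|_{L^2}^{1/2}+C\|f\|_{L^2}$ on the cross-section $\mathbb{I}\times\mathbb{T}$.

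First I would establish a uniform-in-time $\dot H^1$ bound for $(u_{2,0},u_{3,0})$. Testing the second and third equations of \eqref{u_zero} against $\partial_t u_{2,0}$ and $\partial_t u_{3,0}$ and adding, the dissipative terms produce $\tfrac{1}{2A}\tfrac{d}{dt}\big(\|\nabla u_{2,0}\|_{L^2}^2+\|\nabla u_{3,0}\|_{L^2}^2\big)$, the associated boundary contributions vanishing because $u_{2,0}=u_{3,0}=0$, hence $\partial_t u_{2,0}=\partial_t u_{3,0}=0$, on $y=\pm1$. Crucially, the pressure term $\tfrac1A\int\nabla P_0\cdot\partial_t(u_{2,0},u_{3,0})$ also vanishes: integrating by parts, the bulk integral is $-\tfrac1A\int P_0\,\partial_t(\partial_y u_{2,0}+\partial_z u_{3,0})=0$ by \eqref{div u=0}, and the boundary integral vanishes since $\partial_t(u_{2,0},u_{3,0})=0$ on $y=\pm1$. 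This gives the monotonicity $\|\nabla u_{2,0}(t)\|_{L^2}^2+\|\nabla u_{3,0}(t)\|_{L^2}^2\le\|\nabla u_{2,0}(0)\|_{L^2}^2+\|\nabla u_{3,0}(0)\|_{L^2}^2$, which combined with \eqref{u20 infty 2} and Gagliardo--Nirenberg yields \eqref{u20 infty infty} and \eqref{u30 infty infty}.

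For $u_{1,0}$ I would run a direct $L^4$ energy estimate on the scalar equation $(\ref{u_zero})_{1}$, which carries no pressure. Testing against $u_{1,0}^3$ and using $u_{1,0}|_{y=\pm1}=0$ to discard the boundary term gives
\begin{equation*}
\frac14\frac{d}{dt}\|u_{1,0}\|_{L^4}^4+\frac{3}{A}\int_{\mathbb{I}\times\mathbb{T}}u_{1,0}^2|\nabla u_{1,0}|^2\,dydz=\frac1A\int n_0u_{1,0}^3\,dydz-\int u_{2,0}u_{1,0}^3\,dydz.
\end{equation*}
Rewriting the good term as $\tfrac{3}{4A}\|\nabla(u_{1,0}^2)\|_{L^2}^2$ and applying Poincar\'e's inequality to $u_{1,0}^2$ (which vanishes on $y=\pm1$) yields a coercive dissipation $\tfrac{c}{A}\|u_{1,0}\|_{L^4}^4$ of rate $O(A^{-1})$. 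The chemotactic forcing is harmless: by \eqref{assumption_1} one has $\|n_0\|_{L^\infty}\le\|n\|_{L^\infty}\le2E_1$, so $\tfrac1A\int n_0u_{1,0}^3$ is absorbed at the cost of an $O(E_1+M)$ contribution, the mass entering through $\|n_0\|_{L^2}\le E_1^{1/2}M^{1/2}$; the $\|u_{1,0}(0)\|_{L^2}$ term in the target arises through the lower-order parts of the interpolation together with the $L^\infty L^2$ bound of \textbf{Lemma \ref{u_infty2_1}}.

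The heart of the matter — and the step I expect to be the main obstacle — is the lift-up term $\int u_{2,0}u_{1,0}^3$, which encodes the 3D lift-up effect. By H\"older's inequality $\int u_{2,0}u_{1,0}^3\le\|u_{2,0}\|_{L^2}\|u_{1,0}\|_{L^6}^3$, the two-dimensional Gagliardo--Nirenberg inequality applied to $u_{1,0}^2$ (giving $\|u_{1,0}\|_{L^6}^3\le C\|u_{1,0}\|_{L^4}^2\|\nabla(u_{1,0}^2)\|_{L^2}^{1/2}$ up to lower order), and Young's inequality, I would bound this by $\delta\int u_{1,0}^2|\nabla u_{1,0}|^2+C_\delta\|u_{2,0}\|_{L^2}^{4/3}\|u_{1,0}\|_{L^4}^{8/3}$, absorbing the first piece into the $O(A^{-1})$ dissipation at the price $C_\delta\sim A^{1/3}$; a further Young step converts the remainder into part of the coercive $\|u_{1,0}\|_{L^4}^4$ plus a forcing of size $\sim A^3\|u_{2,0}\|_{L^2}^4=(A\|u_{2,0}\|_{L^2})^4/A$. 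This is exactly where \eqref{small u} is indispensable: because the zero-mode dissipation is only of order $A^{-1}$, the forcing $u_{2,0}$ is amplified by a factor of $A$, and it is only the hypothesis $A\|u_{2,0}(0)\|_{L^2}\le C_0$ — together with the fact that $\|u_{2,0}(t)\|_{L^2}$ is non-increasing (indeed exponentially decaying by Poincar\'e, from the energy identity in \textbf{Lemma \ref{u_infty2_1}}) — that keeps this contribution bounded by an absolute constant. A final Gr\"onwall argument against the $O(A^{-1})$ decay rate then closes the estimate and delivers \eqref{u10 infty infty}; without \eqref{small u} the streamwise bound would degrade as $A\to\infty$.
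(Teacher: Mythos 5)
Your proposal is correct and follows essentially the same route as the paper: testing $(\ref{u_zero})_{2,3}$ against $\partial_t u_{2,0},\partial_t u_{3,0}$ to kill the pressure and get the monotone $\dot H^1$ bound, then interpolating; and testing $(\ref{u_zero})_{1}$ against $u_{1,0}^{3}$, handling the cubic terms by Gagliardo--Nirenberg and using $A\|u_{2,0}\|_{L^{\infty}L^{2}}\lesssim 1$ from \eqref{u small} and \eqref{u20 infty 2} to tame the lift-up term. The only cosmetic differences are that the paper closes the $u_{1,0}$ estimate with the Nash inequality $\|u_{1,0}^{2}\|_{L^{2}}^{2}\le C\|\nabla u_{1,0}^{2}\|_{L^{2}}\|u_{1,0}^{2}\|_{L^{1}}$ and an ODE comparison (contradiction) argument rather than your Poincar\'e-plus-Gr\"onwall variant; both are valid.
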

\begin{proof}
	\underline{\textbf{Step 1: Estimate of $ \|u_{1,0}\|_{L^{\infty}L^{4}} $.}} Multiplying $(\ref{u_zero})_{1}$ by $ 4u_{1,0}^{3} $ and integrating the  equation over $ \mathbb{I}\times\mathbb{T}, $  we obtain
	\begin{equation}\label{u2_11}
		\begin{aligned}
			\frac{d}{dt}\|u_{1,0}^{2}\|_{L^{2}}^{2}+\frac{3}{A}\|\nabla u_{1,0}^{2}\|_{L^{2}}^{2}=&-4\int_{\mathbb{I}\times\mathbb{T}}u_{2,0}u_{1,0}^{3}dydz+\frac{4}{A}\int_{\mathbb{I}\times\mathbb{T}}n_{0}u_{1,0}^{3}dydz\\\leq&\frac{4}{A}\left(A\|u_{2,0}\|_{L^{2}}+\|n_{0}\|_{L^{2}} \right)\|u_{1,0}^{3}\|_{L^{2}}.
		\end{aligned}
	\end{equation}	
	Thanks to Gagliardo-Nirenberg inequality
	\begin{equation*}
		\begin{aligned}
			\|u_{1, 0}^{3}\|_{L^2}&=\|u_{1, 0}\|^{3}_{L^{6}} \leq  \left(\|u_{1,0}^{2}\|_{L^{4}}^{2} \right)^{\frac{2}{3}}\|u_{1,0}\|_{L^{2}}^{\frac{1}{3}} \leq C( \|\nabla u^2_{1,0}\|_{L^{2}}\|u^2_{1,0}\|_{L^{2}})^{\frac{2}{3}} \|u_{1,0}\|^{\frac{1}{3}}_{L^{2}},
		\end{aligned}
	\end{equation*}
	then (\ref{u2_11}) yields
	\begin{equation}\label{u2_11_2}
		\begin{aligned}
			&\frac{d}{dt}\|u_{1,0}^{2}\|^2_{L^2}
			+\frac{3}{A}\|\nabla u_{1,0}^{2}\|^2_{L^2}\\
			\leq&\frac{C}{A}\left(A\|u_{2,0}\|_{L^{2}}+\|n_{0}\|_{L^{2}} \right)\|\nabla u_{1,0}^{2}\|_{L^{2}}^{\frac{2}{3}}\|u_{1,0}^{2}\|_{L^{2}}^{\frac{2}{3}}\|u_{1,0}\|_{L^{2}}^{\frac{1}{3}}\\\leq&\frac{2}{A}\|\nabla u_{1,0}^{2}\|_{L^{2}}^{2}+\frac{C}{A}\left( A\|u_{2,0}\|_{L^{2}}+\|n_{0}\|_{L^{2}}\right)^{\frac{3}{2}}\|u_{1,0}^{2}\|_{L^{2}}\|u_{1,0}\|_{L^{2}}^{\frac{1}{2}}.
		\end{aligned}
	\end{equation}	
	By the assumption (\ref{assumption_1}), (\ref{small u}) and (\ref{u20 infty 2}), we find
	\begin{equation}\label{u20 n}
		\begin{aligned}
			&	A\|u_{2,0}\|_{L^{\infty}L^{2}}+\|n_{0}\|_{L^{\infty}L^{2}}\\\leq&A\left(\|u_{2,0}(0)\|_{L^{2}}+\|u_{3,0}(0)\|_{L^{2}} \right)+\|n\|_{L^{\infty}L^{\infty}}^{\frac12}\|n\|_{L^{\infty}L^{1}}^{\frac12}\leq C\left(1+E_{1}+M\right),
		\end{aligned}
	\end{equation}
	and using (\ref{u10 infty 2}), one deduces
	\begin{equation}\label{u n}
		\begin{aligned}
			&\left(A\|u_{2,0}\|_{L^{\infty}L^{2}}+\|n_{0}\|_{L^{\infty}L^{2}} \right)^{\frac{3}{2}}\|u_{1,0}\|_{L^{\infty}L^{2}}^{\frac{1}{2}}\\\leq&C\left[\left(A\|u_{2,0}\|_{L^{\infty}L^{2}}+\|n_{0}\|_{L^{\infty}L^{2}} \right)^{2}+\|u_{1,0}\|_{L^{\infty}L^{2}}^{2} \right]\\\leq& C\left(\|u_{1,0}(0)\|_{L^{2}}^{2}+E_{1}^{2}+M^{2}+1 \right):=CB,
		\end{aligned}
	\end{equation}
	where $ B=\|u_{1,0}(0)\|_{L^{2}}^{2}+E_{1}^{2}+M^{2}+1. $ Moreover, due to Gagliardo-Nirenberg inequality, there holds
	\begin{equation}\label{u2_11_4}
		\begin{aligned}
			\| u^2_{1,0}\|^2_{L^{2}}
			\leq C\|\nabla u^2_{1,0}\|_{L^{2}}\|u^2_{1,0}\|_{L^{1}}.	
		\end{aligned}
	\end{equation}
	It follows from (\ref{u2_11_2}), (\ref{u20 n}), (\ref{u n}) and (\ref{u2_11_4}) that
	\begin{equation*}
		\begin{aligned}
			\frac{d}{dt}\|u_{1,0}^{2}\|^2_{L^2}
			\leq& -\frac{\|u^2_{1,0}\|^4_{L^{2}}}{ AC\|u^2_{1,0}\|^2_{L^{1}} }		
			+\frac{ CB }{ A }\|u_{1,0}^{2}\|_{L^{2}}\\=&-\frac{\|u_{1,0}^{2}\|_{L^{2}}}{AC\|u_{1,0}^{2}\|_{L^{1}}^{2}}\left(\|u_{1,0}^{2}\|_{L^{2}}^{3}-C^{2}B\|u_{1,0}^{2}\|_{L^{1}}^{2} \right)
			.
		\end{aligned}
	\end{equation*}	
	Similar to the proof of (\ref{omega}), applying the proof by contradiction, one deduces
	\begin{equation}\label{u10 p}
		\begin{aligned}
			\|u_{1,0}^{2}\|_{L^{\infty}L^{2}}^{3}\leq C\left(\|u_{1,0}^{2}(0)\|_{L^{2}}^{3}+B\|u_{1,0}^{2}\|_{L^{\infty}L^{1}}^{2} \right).
		\end{aligned}
	\end{equation}
	Substituting (\ref{u10 infty 2}) into (\ref{u10 p})	and recalling the definition of $ B $ in (\ref{u n}), we get
	\begin{equation}\label{u10 infty 4}
		\|u_{1,0}\|_{L^{\infty}L^{4}}\leq C\left(\|u_{1,0}(0)\|_{L^{4}}+\|u_{1,0}(0)\|_{L^{2}}+E_{1}+M+1 \right).
	\end{equation}

	\underline{\textbf{Step 2: Estimates of $ \|u_{2,0}\|_{L^{\infty}L^{4}} $ and $ \|u_{3,0}\|_{L^{\infty}L^{4}}. $}}
	Firstly, let's estimate $ \|\nabla u_{2,0}\|_{L^{\infty}L^{2}} $ and $ \|\nabla u_{3,0}\|_{L^{\infty}L^{2}}. $
	Multiplying $ (\ref{u_zero})_{2} $ by $ \partial_{t}u_{2,0} $ and add $ (\ref{u_zero})_{3}  $ by $ \partial_{t}u_{3,0}, $ then integrating the resulting equation with $ (y,z) $ over $ \mathbb{I}\times\mathbb{T} $ and using (\ref{div u=0}), we obtain
	\begin{equation*}
		\begin{aligned}
			&\|\partial_{t}u_{2,0}\|_{L^{2}}^{2}+\|\partial_{t}u_{3,0}\|_{L^{2}}^{2}+\frac{1}{2A}\frac{d}{dt}\left(\|\nabla u_{2,0}\|_{L^{2}}^{2}+\|\nabla u_{3,0}\|_{L^{2}}^{2}  \right)\\=&-\frac{1}{A}\int_{\mathbb{I}\times\mathbb{T}}\partial_{y}P_{0}\partial_{t}u_{2,0}dydz-\frac{1}{A}\int_{\mathbb{I}\times\mathbb{T}}\partial_{z}P_{0}\partial_{t}u_{3,0}dydz\\=&\frac{1}{A}\int_{\mathbb{I}\times\mathbb{T}}P_{0}\partial_{t}\left(\partial_{y}u_{2,0}+\partial_{z}u_{3,0} \right)dydz=0,
		\end{aligned}
	\end{equation*}
	which follows that
	\begin{equation}\label{u20' u30'}
		\begin{aligned}
			\|\nabla u_{2,0}\|_{L^{\infty}L^{2}}+\|\nabla u_{3,0}\|_{L^{\infty}L^{2}}\leq \|\nabla u_{2,0}(0)\|_{L^{2}}+\|\nabla u_{3,0}(0)\|_{L^{2}}.
		\end{aligned}
	\end{equation}		
	By interpolation inequality, (\ref{u20 infty 2}) and (\ref{u20' u30'}), we arrive at
	\begin{equation}\label{u20 infty 4}
		\begin{aligned}
			\|u_{2,0}\|_{L^{\infty}L^{4}}\leq& C\|u_{2,0}\|_{L^{\infty}L^{2}}^{\frac12}\|\nabla u_{2,0}\|_{L^{\infty}L^{2}}^{\frac12}\\\leq& C\left(\|u_{2,0}\|_{L^{\infty}L^{2}}+\|\nabla u_{2,0}\|_{L^{\infty}L^{2}} \right)\\\leq&C\left(\|u_{2,0}(0)\|_{H^{1}}+\|u_{3,0}(0)\|_{H^{1}}+1 \right),
		\end{aligned}
	\end{equation}
	and
	\begin{equation}\label{u30 infty 4}
		\|u_{3,0}\|_{L^{\infty}L^{4}}\leq C\left(\|u_{2,0}(0)\|_{H^{1}}+\|u_{3,0}(0)\|_{H^{1}}+1 \right).
	\end{equation}

	Collecting (\ref{u10 infty 4}), (\ref{u20 infty 4}) and (\ref{u30 infty 4}), we complete the proof.
\end{proof}

\begin{corollary}\label{corollary_1}
	It follows from \textbf{Lemma \ref{lemma_2_1}} that
	\begin{equation}\label{u0 infty infty}
		\begin{aligned}
			||u_0||_{L^{\infty}L^{4}}\leq C
			\left(\|u_{\rm in,0}\|_{H^{1}}+E_{1}+M+1 \right):=H_1.	
		\end{aligned}
	\end{equation}
\end{corollary}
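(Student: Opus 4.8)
The plan is to derive the estimate directly from \textbf{Lemma \ref{lemma_2_1}} by summing the three component bounds. First I would write the zero mode as $u_0=(u_{1,0},u_{2,0},u_{3,0})$ and use the triangle inequality for the $L^\infty L^4$ norm to reduce to the componentwise estimates, namely
\begin{equation*}
	\|u_0\|_{L^\infty L^4}\leq \|u_{1,0}\|_{L^\infty L^4}+\|u_{2,0}\|_{L^\infty L^4}+\|u_{3,0}\|_{L^\infty L^4}.
\end{equation*}
Substituting \eqref{u10 infty infty}, \eqref{u20 infty infty} and \eqref{u30 infty infty} into the right-hand side then bounds $\|u_0\|_{L^\infty L^4}$ by a constant multiple of
\begin{equation*}
	\|u_{1,0}(0)\|_{L^4}+\|u_{1,0}(0)\|_{L^2}+\|u_{2,0}(0)\|_{H^1}+\|u_{3,0}(0)\|_{H^1}+E_1+M+1.
\end{equation*}

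It then remains to absorb all the initial-data terms into $\|u_{\rm in,0}\|_{H^1}$. Since the zero mode depends only on the cross-sectional variables $(y,z)\in\mathbb{I}\times\mathbb{T}$, I would invoke the two-dimensional Sobolev embedding $H^1(\mathbb{I}\times\mathbb{T})\hookrightarrow L^4(\mathbb{I}\times\mathbb{T})$ to control $\|u_{1,0}(0)\|_{L^4}\leq C\|u_{1,0}(0)\|_{H^1}$, while trivially $\|u_{1,0}(0)\|_{L^2}\leq\|u_{1,0}(0)\|_{H^1}$. Collecting the three components, each of $\|u_{1,0}(0)\|_{H^1}$, $\|u_{2,0}(0)\|_{H^1}$ and $\|u_{3,0}(0)\|_{H^1}$ is bounded by $\|u_{\rm in,0}\|_{H^1}$, which yields the stated inequality with $H_1=C(\|u_{\rm in,0}\|_{H^1}+E_1+M+1)$.

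This corollary is essentially a bookkeeping step rather than a substantive result, so there is no real obstacle; the only point requiring a moment of care is the passage from the $L^4$ norm of $u_{1,0}(0)$ to its $H^1$ norm, which relies on the Sobolev embedding being applied on the two-dimensional slice $\mathbb{I}\times\mathbb{T}$ where the zero mode lives, rather than on the full three-dimensional channel.
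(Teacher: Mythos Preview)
Your proposal is correct and is exactly the natural argument the paper leaves implicit: in the paper the corollary is stated without proof as an immediate consequence of \textbf{Lemma \ref{lemma_2_1}}, and your triangle-inequality-plus-Sobolev-embedding justification is precisely the routine bookkeeping needed to pass from the three componentwise bounds to the single $H^1$ norm on the right-hand side.
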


\section{The estimate of $ E(t) $ and  proof of Proposition \ref{prop 1}}\label{sec 4}
\begin{proof}[Proof of Proposition \ref{prop 1}]
	The estimate of $ E(t) $ is divided into three terms, and we deal with them, respectively.
	
	\noindent\textbf{\underline{Step I. Estimate $||n_{\neq}||_{Y_a}$}.}
	Applying \textbf{Proposition \ref{slip}}  to  $(\ref{ini3})_1$, 
	we get 
	\begin{equation*}
		\begin{aligned}
			&\|n^{k_{1},k_{3}}\|_{Y_{a}^{k_{1},k_{3}}}^{2}\\\leq&C\left(\|n_{\rm in}^{k_{1},k_{3}}\|_{L^{2}}^{2}+\frac{1}{A}\|e^{aA^{-\frac13}t}(u_{2}n)^{k_{1},k_{3}}\|_{L^{2}L^{2}}^{2}+\frac{1}{A}\|e^{aA^{-\frac13}t}(n\partial_{y} c)^{k_{1},k_{3}}\|_{L^{2}L^{2}}^{2} \right)\\&+\frac{C}{A^{2}}\min\{(A^{-1}\eta^{2})^{-1}, (A^{-1}k_{1}^{2})^{-\frac13} \}\|e^{aA^{-\frac13}t}\left[k_{1}(u_{1}n)^{k_{1},k_{3}}+k_{1}(n\partial_{x} c)^{k_{1},k_{3}} \right]\|_{L^{2}L^{2}}^{2}\\&+\frac{C}{A^{2}}\min\{(A^{-1}\eta^{2})^{-1}, (A^{-1}k_{1}^{2})^{-\frac13} \}\|e^{aA^{-\frac13}t}\left[k_{3}(u_{3}n)^{k_{1},k_{3}}+k_{3}(n\partial_{z} c)^{k_{1},k_{3}} \right]\|_{L^{2}L^{2}}^{2}\\\leq&C\left(\|n_{\rm in}^{k_{1},k_{3}}\|_{L^{2}}^{2}+\frac{1}{A}\|e^{aA^{-\frac13}t}(un)^{k_{1},k_{3}}\|_{L^{2}L^{2}}^{2}+\frac{1}{A}\|e^{aA^{-\frac13}t}(n\nabla c)^{k_{1},k_{3}}\|_{L^{2}L^{2}}^{2} \right),
		\end{aligned}
	\end{equation*}
	and it follows from \eqref{eq:x a y a} that
	\begin{equation}\label{c_temp_11}
		\begin{aligned}
			||n_{\neq}||_{Y_a}
			\leq
			C\Big(\|n_{\rm in,\neq}\|_{L^2}
			+\frac{1}{A^{\frac{1}{2}}}\|{\rm e}^{aA^{-\frac{1}{3}}t}(un)_{\neq}\|_{L^2L^2}
			+\frac{1}{A^{\frac{1}{2}}}\|{\rm e}^{aA^{-\frac{1}{3}}t}(n\nabla c)_{\neq}\|_{L^2L^2}
			\Big).		
		\end{aligned}
	\end{equation}
	According to (\ref{non-zero mode}), there holds
	\begin{equation}\label{un_1}
		\begin{aligned}
			&\|{\rm e}^{aA^{-\frac{1}{3}}t}(un)_{\neq}\|_{L^2L^2}\\ \leq &C\Big(
			\|{\rm e}^{aA^{-\frac{1}{3}}t}u_0n_{\neq}\|_{L^2L^2}
			+\|{\rm e}^{aA^{-\frac{1}{3}}t}u_{\neq}n_{0}\|_{L^2L^2}
			+\|{\rm e}^{aA^{-\frac{1}{3}}t}(u_{\neq} n_{\neq})_{\neq}\|_{L^2L^2}\Big).
		\end{aligned}
	\end{equation}
	Due to 
	\begin{equation*}
		\|n_{\neq}\|_{L^{4}}\leq C\|n_{\neq}\|_{L^{2}}^{\frac14}\|\nabla n_{\neq}\|_{L^{2}}^{\frac34},
	\end{equation*}
	we get
	\ben\label{n neq 24}
	\|n_{\neq}\|_{L^{2}L^{4}}\leq C\left(\int_{0}^{t}\|n_{\neq}\|_{L^{2}}^{2}ds \right)^{\frac18}\left(\int_{0}^{t}\|\nabla n_{\neq}\|_{L^{2}}^{2}ds \right)^{\frac38}.
	\een
	Using (\ref{Y_a}), (\ref{n neq 24}) and \textbf{Corollary \ref{corollary_1}}, we have
	\begin{equation}\label{un_2}
		\begin{aligned}
			\|e^{aA^{-\frac13}t}u_{0}n_{\neq}\|_{L^{2}L^{2}}\leq&\|u_{0}\|_{L^{\infty}L^{4}}\|e^{aA^{-\frac13}t}n_{\neq}\|_{L^{2}L^{4}}\\\leq&CH_{1}\|e^{aA^{-\frac13}t}n_{\neq}\|_{L^{2}L^{2}}^{\frac14}\|e^{aA^{-\frac13}t}\nabla n_{\neq}\|_{L^{2}L^{2}}^{\frac34}\\\leq&CH_{1}A^{\frac{5}{12}}\|n_{\neq}\|_{Y_{a}}.
		\end{aligned}
	\end{equation}
	By (\ref{X_a}), (\ref{Y_a}), \textbf{Lemma \ref{lemma_0}} and \textbf{Lemma \ref{lemma_delta_u}}, there holds
	\begin{equation}\label{u neq 22}
		\begin{aligned}
			\|e^{aA^{-\frac13}t}u_{\neq}\|_{L^{2}L^{2}}\leq& C\|e^{aA^{-\frac13}t}\partial_{x}u_{\neq}\|_{L^{2}L^{2}}\\\leq& C\left(\|e^{aA^{-\frac13}t}\omega_{2,\neq}\|_{L^{2}L^{2}}+\|e^{aA^{-\frac13}t}\nabla u_{2,\neq}\|_{L^{2}L^{2}} \right)\\\leq&C\left(\|e^{aA^{-\frac13}t}\partial_{x}\omega_{2,\neq}\|_{L^{2}L^{2}}+\|e^{aA^{-\frac13}t}\partial_{x}\nabla u_{2,\neq}\|_{L^{2}L^{2}} \right)\\\leq&CA^{\frac16}\|\partial_{x}\omega_{2,\neq}\|_{Y_{a}}+C\|u_{2,\neq}\|_{X_{a}}.
		\end{aligned}
	\end{equation} 
	Thanks to (\ref{assumption_1}) and (\ref{u neq 22}), one obtains
	\begin{equation}\label{un_3}
		\begin{aligned}
			\|{\rm e}^{aA^{-\frac{1}{3}}t}u_{\neq} n_{0}\|_{L^2L^2}
			&\leq \|n\|_{L^{\infty}L^{\infty}}\|{\rm e}^{aA^{-\frac{1}{3}}t}u_{\neq}\|_{L^2L^2}\\
			&\leq CE_1A^{\frac{1}{6}}\|\partial_{x}\omega_{2,\neq}\|_{Y_{a}}+CE_{1}\|u_{2,\neq}\|_{X_{a}},
		\end{aligned}
	\end{equation}
	and 
	\begin{equation}\label{un_4}
		\begin{aligned}
			\|{\rm e}^{aA^{-\frac{1}{3}}t}(u_{\neq} n_{\neq})_{\neq}\|_{L^2L^2}
			&\leq C\|n\|_{L^{\infty}L^{\infty}}\|{\rm e}^{aA^{-\frac{1}{3}}t}u_{\neq}\|_{L^2L^2}\\
			&\leq CE_1A^{\frac{1}{6}}\|\partial_{x}\omega_{2,\neq}\|_{Y_{a}}+CE_{1}\|u_{2,\neq}\|_{X_{a}}.
		\end{aligned}
	\end{equation}
	Combining (\ref{un_1}), (\ref{un_2}), (\ref{un_3}) and (\ref{un_4}), there holds
	\begin{equation}\label{un_end}
		\begin{aligned}
			&\|{\rm e}^{aA^{-\frac{1}{3}}t}(un)_{\neq}\|_{L^2L^2}
			\\\leq& C(E_1+H_1)A^{\frac{5}{12}}\left(||n_{\neq}||_{Y_a}+||\partial_{x}\omega_{2,\neq}||_{Y_a}\right)+CE_{1}\|u_{2,\neq}\|_{X_{a}}.
		\end{aligned}
	\end{equation}
	
	Similarly as (\ref{un_1}), we have
	\begin{equation}\label{nc_1}
		\begin{aligned}
			&\|{\rm e}^{aA^{-\frac{1}{3}}t}(n\nabla c)_{\neq}\|_{L^2L^2} \\
			\leq& C\Big(
			\|{\rm e}^{aA^{-\frac{1}{3}}t}n_0\nabla c_{\neq}\|_{L^2L^2}
			+\|{\rm e}^{aA^{-\frac{1}{3}}t}n_{\neq}\nabla c_{0}\|_{L^2L^2}
			+\|{\rm e}^{aA^{-\frac{1}{3}}t}(n_{\neq} \nabla c_{\neq})_{\neq}\|_{L^2L^2}\Big).
		\end{aligned}
	\end{equation}
	From \textbf{Lemma \ref{ellip_0}} and the assumption (\ref{assumption_1}), we note that
	\begin{equation}\label{c0' infty 4}
		\|\nabla c_{0}\|_{L^{\infty}L^{4}}\leq C\|n_{0}\|_{L^{\infty}L^{2}}\leq C\|n_{0}\|_{L^{\infty}L^{\infty}}^{\frac12}\|n_{0}\|_{L^{\infty}L^{1}}^{\frac12}\leq CE_{1}^{\frac12}M^{\frac12}.
	\end{equation}
	Then it follows from (\ref{Y_a}), (\ref{n neq 24}) and (\ref{c0' infty 4}) that
	\begin{equation}\label{nc_2}
		\begin{aligned}	
			\|{\rm e}^{aA^{-\frac{1}{3}}t}n_{\neq}\nabla c_{0}\|_{L^2L^2}
			&\leq 
			\|\nabla c_{0}\|_{L^{\infty}L^{4}}
			\|{\rm e}^{aA^{-\frac{1}{3}}t}n_{\neq}\|_{L^2L^4} \\
			&\leq CE_1^{\frac{1}{2}}M^{\frac{1}{2}}		
			\|{\rm e}^{aA^{-\frac{1}{3}}t}n_{\neq}\|_{L^2L^2}^{\frac14}\|e^{aA^{-\frac13}t}\nabla n_{\neq}\|_{L^{2}L^{2}}^{\frac34}\\&
			\leq CA^{\frac{5}{12}}\left(E_{1}+M \right)
			||n_{\neq}||_{Y_a}.
		\end{aligned}
	\end{equation}
	Using (\ref{Y_a}), (\ref{assumption_1}) and \textbf{Lemma \ref{ellip_2}}, there holds
	\begin{equation}\label{nc_3}
		\begin{aligned}
			\|{\rm e}^{aA^{-\frac{1}{3}}t}n_0\nabla c_{\neq}\|_{L^2L^2}
			\leq& ||n_0||_{L^{\infty}L^{\infty}}
			\|{\rm e}^{aA^{-\frac{1}{3}}t}\nabla c_{\neq}\|_{L^2L^2}
			\\\leq& C||n||_{L^{\infty}L^{\infty}}
			\|{\rm e}^{aA^{-\frac{1}{3}}t}n_{\neq}\|_{L^2L^2} 
			\leq CE_1A^{\frac{1}{6}}||n_{\neq}||_{Y_a},
		\end{aligned}
	\end{equation}
	and 
	\begin{equation}\label{nc_4}
		\begin{aligned}
			\|{\rm e}^{aA^{-\frac{1}{3}}t}(n_{\neq} \nabla c_{\neq})_{\neq}\|_{L^2L^2}
			&	\leq 
			C\|n_{\neq}\|_{L^{\infty}L^{\infty}}\|{\rm e}^{aA^{-\frac{1}{3}}t}\nabla c_{\neq}\|_{L^2L^2} \\
			&\leq CE_1\|{\rm e}^{aA^{-\frac{1}{3}}t}n_{\neq}\|_{L^2L^2}
			\leq CE_1A^{\frac{1}{6}}||n_{\neq}||_{Y_a},
		\end{aligned}
	\end{equation}
	where we use $$\|n_{0}\|_{L^{\infty}L^{\infty}}+\|n_{\neq}\|_{L^{\infty}L^{\infty}}
	\leq 3\|n\|_{L^{\infty}L^{\infty}}\leq CE_1.$$
	Combining (\ref{nc_1}), (\ref{nc_2}), (\ref{nc_3}) and (\ref{nc_4}), there holds
	\begin{equation}\label{nc_end}
		\begin{aligned}
			\|{\rm e}^{aA^{-\frac{1}{3}}t}(n\nabla c)_{\neq}\|_{L^2L^2}
			\leq C(E_1+M)A^{\frac{5}{12}}||n_{\neq}||_{Y_a}.
		\end{aligned}
	\end{equation}
	Substituting (\ref{un_end}) and (\ref{nc_end}) into (\ref{c_temp_11}), we obtain 
	\begin{equation}\label{n_temp_11}
		\begin{aligned}
			&||n_{\neq}||_{Y_a}
			\\\leq&
			C\Big(\|n_{\rm in,\neq}\|_{L^2}
			+\frac{E_1+H_1+M}{A^{\frac{1}{12}}}(||\partial_{x}\omega_{2,\neq}||_{Y_a}+||n_{\neq}||_{Y_a}+|| u_{2,\neq}||_{X_a})
			\Big)\\\leq&C\left(\|n_{\rm in,\neq} \|_{L^{2}}+\frac{E_{1}^{2}+E_{0}^{2}+H_{1}^{2}+M^{2}}{A^{\frac{1}{12}}} \right).		
		\end{aligned}
	\end{equation}

	\noindent\textbf{~\underline{Step II. Estimate $ \|u_{2,\neq}\|_{X_{a}} $.}}
	Applying \textbf{Proposition \ref{noslip}}  to  $(\ref{ini3})_3$ and noting that $ \partial_{y}u_{2}^{k_{1},k_{3}}=-\left(ik_{1}u_{1}^{k_{1},k_{3}}+ik_{3}u_{3}^{k_{1},k_{3}} \right), $ we arrive at
	\begin{equation}\label{u2 neq}
		\begin{aligned}
			\|u_{2}^{k_{1},k_{3}}\|_{X_{a}^{k_{1},k_{3}}}^{2}\leq& C\left(
			\|\widehat{\triangle}u_{2,\rm in}^{k_{1},k_{3}}\|_{L^{2}}^{2}+(k_{1}^{2}+k_{3}^{2})^{-1}\left\|\widehat{\triangle}\left(ik_{1}u_{1,\rm in}^{k_{1},k_{3}}+ik_{3}u_{3,\rm in}^{k_{1},k_{3}} \right)\right\|_{L^{2}}^{2}
			\right)\\&+\frac{C}{A}\|e^{aA^{-\frac13}t}k_{1}n^{k_{1},k_{3}}\|_{L^{2}L^{2}}^{2}\\\leq&C\left(\|\widehat{\triangle}u_{\rm in}^{k_{1},k_{3}}\|_{L^{2}}^{2}+\frac{1}{A}\|e^{aA^{-\frac13}t}k_{1}n^{k_{1},k_{3}}\|_{L^{2}L^{2}}^{2} \right).
		\end{aligned}
	\end{equation}
	Using \eqref{eq:x a y a}, (\ref{Y_a}) and (\ref{n_temp_11}), (\ref{u2 neq}) yields
	\begin{equation}\label{u 2 neq}
		\begin{aligned}
			\|u_{2,\neq}\|_{X_{a}}\leq&C\left(\|(\triangle u_{\rm in})_{\neq}\|_{L^{2}}+\frac{1}{A^{\frac12}}\|e^{aA^{-\frac13}t}\partial_{x}n_{\neq}\|_{L^{2}L^{2}} \right)\\\leq&C\left(\|(\triangle u_{\rm in})_{\neq}\|_{L^{2}}+\|n_{\neq}\|_{Y_{a}} \right)\\\leq&C\left(\|(\triangle u_{\rm in})_{\neq}\|_{L^{2}}+\|n_{\rm in,\neq}\|_{L^{2}}+\frac{E_{1}^{2}+E_{0}^{2}+H_{1}^{2}+M^{2}}{A^{\frac{1}{12}}} \right).
		\end{aligned}
	\end{equation}

	\noindent\textbf{\underline{Step III. Estimate $\|\partial_{x}\omega_{2,\neq}\|_{Y_{a}}$}.}
	Applying \textbf{Proposition \ref{slip}}  to  $(\ref{ini3})_2$, we have
	\begin{equation}\label{omega'}
		\begin{aligned}
			&\|\omega_{2}^{k_{1},k_{3}}\|_{Y_{a}^{k_{1},k_{3}}}^{2}\\\leq&C\left(\|\omega_{2,{\rm in}}^{k_{1},k_{3}}\|_{L^{2}}^{2}+\frac{1}{A^{2}}\min\{ (A^{-1}\eta^{2})^{-1}, (A^{-1}k_{1}^{2})^{-\frac13}
			\}\|e^{aA^{-\frac13}t}k_{3}n^{k_{1},k_{3}}\|_{L^{2}L^{2}}^{2} \right)\\+&C\left(k_{3}^{2}(|k_{1}|\eta)^{-1} \right)\|e^{aA^{-\frac13}t}\partial_{y}u_{2}^{k_{1},k_{3}}\|_{L^{2}L^{2}}^{2}+C\left(k_{3}^{2}\eta|k_{1}|^{-1} \right)\|e^{aA^{-\frac13}t}u_{2}^{k_{1},k_{3}}\|_{L^{2}L^{2}}^{2} .
		\end{aligned}
	\end{equation}
	Noting that
	\begin{equation*}
		\begin{aligned}
			&	\sum_{k_{1}\neq 0,k_{3}\in\mathbb{Z}}k_{1}^{2}\left[\left(k_{3}^{2}(|k_{1}|\eta)^{-1} \right)\|e^{aA^{-\frac13}t}\partial_{y}u_{2}^{k_{1},k_{3}}\|_{L^{2}L^{2}}^{2}+\left(k_{3}^{2}\eta|k_{1}|^{-1} \right)\|e^{aA^{-\frac13}t}u_{2}^{k_{1},k_{3}}\|_{L^{2}L^{2}}^{2} \right]\\\leq&\sum_{k_{1}\neq 0,k_{3}\in\mathbb{Z}}\eta|k_{1}|\|e^{aA^{-\frac13}t}(\partial_{y}, i\eta)u_{2}^{k_{1},k_{3}}\|_{L^{2}L^{2}}^{2}\\\leq&\sum_{k_{1}\neq 0,k_{3}\in\mathbb{Z}}\|u_{2}^{k_{1},k_{3}}\|_{X_{a}^{k_{1},k_{3}}}^{2}=\|u_{2,\neq}\|_{X_{a}}^{2},
		\end{aligned}
	\end{equation*}
	it follows from (\ref{omega'}) that
	\begin{equation}\label{omega' 1}
		\begin{aligned}
			&	\|\partial_{x}\omega_{2,\neq}\|_{Y_{a}}^{2}\\\leq&C\sum_{k_{1}\neq 0,k_{3}\in\mathbb{Z}}k_{1}^{2}\left(\|\omega_{2,{\rm in}}^{k_{1},k_{3}}\|_{L^{2}}^{2}+\frac{1}{A}\|e^{aA^{-\frac13}t}n^{k_{1},k_{3}}\|_{L^{2}L^{2}}^{2} \right)\\+&C\sum_{k_{1}\neq 0,k_{3}\in\mathbb{Z}}k_{1}^{2}\left[\left(k_{3}^{2}(|k_{1}|\eta)^{-1} \right)\|e^{aA^{-\frac13}t}\partial_{y}u_{2}^{k_{1},k_{3}}\|_{L^{2}L^{2}}^{2}+\left(k_{3}^{2}\eta|k_{1}|^{-1} \right)\|e^{aA^{-\frac13}t}u_{2}^{k_{1},k_{3}}\|_{L^{2}L^{2}}^{2} \right]\\\leq&C\left(\|(\partial_{x}\omega_{2,\rm in})_{\neq}\|_{L^{2}}^{2}+\frac{1}{A}\|e^{aA^{-\frac13}t}\partial_{x}n_{\neq}\|_{L^{2}L^{2}}^{2}+\|u_{2,\neq}\|_{X_{a}}^{2} \right)\\\leq&C\left(\|(\partial_{x}\omega_{2,\rm in})_{\neq}\|_{L^{2}}^{2}+\|n_{\neq}\|_{Y_{a}}^{2}+\|u_{2,\neq}\|_{X_{a}}^{2}   \right).
		\end{aligned}		
	\end{equation}

	Substituting (\ref{n_temp_11}) and (\ref{u 2 neq}) into (\ref{omega' 1}), we conclude that
	\begin{equation}\label{omega' reslut}
		\begin{aligned}
			&\|\partial_{x}\omega_{2,\neq}\|_{Y_{a}}\\\leq& C\left( \|(\partial_{x}\omega_{2,\rm in})_{\neq}\|_{L^{2}}+\|(\triangle u_{\rm in})_{\neq}\|_{L^{2}}+\|n_{\rm in,\neq}\|_{L^{2}}+\frac{E_{1}^{2}+E_{0}^{2}+H_{1}^{2}+M^{2}}{A^{\frac{1}{12}}}\right).	
		\end{aligned}
	\end{equation}

	To sum up, we conclude that
	\begin{equation}\label{E(t)}
		E(t)\leq C\Big(E_{\rm in}+\frac{E_1^2+E_0^2+M^2+H_{1}^{2}}{A^{\frac{1}{12}}}\Big),
	\end{equation}
	where
	\begin{equation*}
		\begin{aligned}
			E_{\rm in}=\|(\partial_{x}\omega_{2,\rm in})_{\neq}\|_{L^{2}}+\|(\triangle u_{\rm in})_{\neq}\|_{L^{2}}+\|n_{\rm in,\neq}\|_{L^{2}}.
		\end{aligned}
	\end{equation*}
	Let us denote $ D_{1}:=\left(E_{1}^{2}+E_{0}^{2}+M^{2}+H_{1}^{2} \right)^{12}. $ Thus if $ A\geq D_{1}, $ (\ref{E(t)}) implies that
	\begin{equation*}
		E(t)\leq C\left(E_{\rm in}+1 \right):=E_{0}.
	\end{equation*}
	
	We complete the proof.
	
\end{proof}

\section{The $L^2$ estimate of zero mode of the density}\label{sec 5}

\begin{proposition}\label{priori0}
	Under the assumptions (\ref{assumption_0})-(\ref{assumption_1}), $ C_{*}^{3}M<1 $ and
	\begin{equation}\label{u}
		A\left(\|u_{2,0}(0)\|_{L^{2}}+\|u_{3,0}(0)\|_{L^{2}} \right)\leq C,
	\end{equation}
	there exist a positive constant $ D_{2} $ depending on $ E_{1}$ and $ E_{0}, $ and a positive constant $ H_{2} $ depending on $ \|n_{\rm in,0}\|_{L^{2}}, \|u_{\rm in,0}\|_{H^{1}} $ and $ M $ such that if $ A\geq D_{2}, $ there holds
	\begin{equation}\label{t1}
		\begin{aligned}
			\|n_{0}\|_{L^{\infty}L^{2}}\leq H_{2}.
		\end{aligned}		
	\end{equation}
\end{proposition}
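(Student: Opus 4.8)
The plan is to run an $L^2$ energy estimate on the $x$-zero mode $n_0$, whose equation I obtain by applying $P_0$ to the first equation of (\ref{ini2}) and invoking (\ref{zero mode}). Since $P_0(y\partial_x n)=y\partial_x n_0=0$ and the pure $x$-derivative parts of the fluxes drop under $P_0$, one is left with
\[
\partial_t n_0-\tfrac1A\triangle n_0=-\tfrac1A\nabla\cdot(u_0n_0)-\tfrac1A\nabla\cdot(u_\neq n_\neq)_0-\tfrac1A\nabla\cdot(n_0\nabla c_0)-\tfrac1A\nabla\cdot(n_\neq\nabla c_\neq)_0
\]
on $\mathbb{I}\times\mathbb{T}$, with $n_0|_{y=\pm1}=0$ and $\nabla\cdot u_0=0$ (here $\nabla=(\partial_y,\partial_z)$). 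Multiplying by $n_0$ and integrating, the good term $\tfrac1A\|\nabla n_0\|_{L^2}^2$ appears on the left. The transport term with $u_0$ vanishes, since $\nabla\cdot u_0=0$ and both $u_0$ and $n_0$ vanish at $y=\pm1$ (periodicity in $z$). The self-interaction term, after integrating by parts and using $\triangle c_0=c_0-n_0$ from \textbf{Lemma \ref{ellip_0}}, produces $\tfrac1{2A}\int n_0^3-\tfrac1{2A}\int c_0 n_0^2$. Because nonnegativity of $n$ is preserved by the flow (this is already used implicitly in $M=\|n\|_{L^1}=\|n_{\rm in}\|_{L^1}$), we have $n_0\ge0$ and, by the maximum principle for $-\triangle+1$ with zero Dirichlet data, $c_0\ge0$; hence the last term is $\le0$ and may be discarded.

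The heart of the matter is the cubic term $\tfrac1{2A}\int n_0^3$, which I would control exactly at the threshold $C_*^3M<1$. I split $n_0=n_{(0,0)}+n_{(0,\neq)}$ into its $z$-mean and $z$-oscillation. Since $\int_{\mathbb T}n_{(0,\neq)}dz=0$ the cross term $\int n_{(0,0)}^2 n_{(0,\neq)}$ vanishes, leaving $\int n_0^3=\int n_{(0,0)}^3+3\int n_{(0,0)}n_{(0,\neq)}^2+\int n_{(0,\neq)}^3$. The sharp Sobolev inequality of \textbf{Theorem \ref{result}} applies to $n_{(0,\neq)}$, as it vanishes at $y=\pm1$ and has zero $z$-mean, so $\|n_{(0,\neq)}\|_{L^3}^3\le C_*^3\|n_{(0,\neq)}\|_{L^1}\|\nabla n_{(0,\neq)}\|_{L^2}^2$. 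Using the orthogonality $\|\nabla n_{(0,\neq)}\|_{L^2}\le\|\nabla n_0\|_{L^2}$ (the $z$-mean and $z$-oscillation of $\nabla$ are $L^2$-orthogonal) together with $\|n_{(0,\neq)}\|_{L^1}\le 2\|n_0\|_{L^1}\le 2M$, the factor $2$ from passing to $n_{(0,\neq)}$ is precisely cancelled by the $\tfrac12$ in $\tfrac1{2A}\int n_0^3$, so this critical piece is bounded by $\tfrac{C_*^3M}{A}\|\nabla n_0\|_{L^2}^2$ and is absorbable into the dissipation exactly when $C_*^3M<1$. The purely $z$-averaged cubic $\int n_{(0,0)}^3$ is one-dimensional in $y$ and thus subcritical: a Gagliardo–Nirenberg/Poincar\'e estimate bounds it by $\epsilon\|\nabla n_0\|_{L^2}^2+C_\epsilon M^5$, while the mixed term is split by Young's inequality into a small multiple of the critical piece (absorbed, using the strictness of $C_*^3M<1$) plus another subcritical remainder.

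For the couplings with the nonzero modes, $\tfrac1A\int(u_\neq n_\neq)_0\cdot\nabla n_0$ and $\tfrac1A\int(n_\neq\nabla c_\neq)_0\cdot\nabla n_0$, I would apply Young's inequality, keeping a small fraction of $\tfrac1A\|\nabla n_0\|_{L^2}^2$ on the left and leaving $\tfrac CA\|u_\neq n_\neq\|_{L^2}^2$ and $\tfrac CA\|n_\neq\nabla c_\neq\|_{L^2}^2$, the latter treated via \textbf{Lemma \ref{ellip_2}}. After integration in time these are bounded, using \textbf{Lemma \ref{lemma_0}}, \textbf{Lemma \ref{lemma_delta_u}}, the embeddings (\ref{X_a})–(\ref{Y_a}), and the a priori bounds $E(t)\le 2E_0$ and (\ref{assumption_1}), by a fixed power of $A$ times $E_0^2E_1^2$; combined with the $\tfrac1A$ prefactor this is $O(A^{-\gamma})$ and hence small once $A\ge D_2(E_0,E_1)$. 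Collecting everything yields, for some $\kappa>0$,
\[
\tfrac12\tfrac{d}{dt}\|n_0\|_{L^2}^2+\tfrac{\kappa}{A}\|\nabla n_0\|_{L^2}^2\le \tfrac{C}{A}\big(M^5+G(t)\big),
\]
where $\int_0^t G\,ds$ is the small nonzero-mode contribution. Finally I would use Poincar\'e's inequality $\|n_0\|_{L^2}^2\le C\|\nabla n_0\|_{L^2}^2$ to convert the surviving dissipation into a damping term $\tfrac{\kappa'}{A}\|n_0\|_{L^2}^2$; integrating the resulting differential inequality against $e^{\kappa' t/A}$, the $\tfrac1A$ in the forcing cancels the $A$ coming from the decay rate, producing a uniform-in-time bound $\|n_0\|_{L^\infty L^2}\le H_2$ with $H_2$ depending only on the initial data and $M$, as in (\ref{t1}).

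The main obstacle is the cubic term: extracting the sharp condition $C_*^3M<1$, rather than something lossier, hinges on the precise bookkeeping above, namely the cancellation of the $z$-mean doubling $\|n_{(0,\neq)}\|_{L^1}\le 2M$ against the $\tfrac12$ in the energy identity, together with the orthogonality $\|\nabla n_{(0,\neq)}\|_{L^2}\le\|\nabla n_0\|_{L^2}$, and ensuring that the subcritical remainders and the mixed term never consume more of the critical coefficient than the \emph{strict} inequality $C_*^3M<1$ can afford. A secondary, more routine difficulty is tracking the $A$-powers in the nonzero-mode couplings so that, with the enhanced-dissipation space–time norms, they are genuinely absorbed for $A\ge D_2$.
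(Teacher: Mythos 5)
Your proposal is correct, and it takes a genuinely different route from the paper's. The paper splits $n_0=n_{(0,0)}+n_{(0,\neq)}$ at the level of the \emph{equations} and runs two coupled energy estimates: the $(0,0)$-part is closed with Nash's inequality and a barrier argument as in the proof of (\ref{omega}), the $(0,\neq)$-part carries the critical cubic term (\ref{a7}) plus the whole family of cross terms $T_{21}$--$T_{27}$ in (\ref{T_2}), and the two bounds are then substituted into one another. You instead run a single $L^2$ estimate on $n_0$ and only split in $z$ \emph{inside} the cubic term. This buys three real simplifications: the transport term $\int\nabla\cdot(u_0n_0)\,n_0$ vanishes outright by incompressibility of $u_0$, eliminating the analogues of $T_{25}$--$T_{27}$ and with them the only use of hypothesis (\ref{u}) inside this proof; the identity $\int n_0\nabla c_0\cdot\nabla n_0=\tfrac12\int n_0^3-\tfrac12\int n_0^2c_0$ lets you discard the second term by sign rather than estimate its oscillating analogue via (\ref{eq:c*}); and your constant bookkeeping for the critical piece (the factor $2$ from $\|n_{(0,\neq)}\|_{L^1}\le 2M$ cancelling the $\tfrac12$ from the energy identity, together with $\|\nabla n_{(0,\neq)}\|_{L^2}\le\|\nabla n_0\|_{L^2}$) lands on exactly the same threshold $C_*^3M<1$ as the paper's. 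Two points must be made honest for the argument to stand. First, discarding $-\tfrac1{2A}\int n_0^2c_0$ requires $n\ge0$ (hence $c_0\ge0$ by the maximum principle for $-\triangle+1$ with Dirichlet data); the paper uses nonnegativity implicitly in $\|n(t)\|_{L^1}=M$ but never states it, so you should. Second, your final linear Poincar\'e--Gr\"onwall step closes only because every surviving remainder can be reduced, via $L^1$-based Gagliardo--Nirenberg and the conserved mass, to a constant depending on $M$ alone (e.g.\ $\int n_{(0,0)}^3\le\epsilon\|\partial_yn_{(0,0)}\|_{L^2}^2+C_\epsilon M^5$ and $3\int n_{(0,0)}n_{(0,\neq)}^2\le\epsilon\|\nabla n_0\|_{L^2}^2+C_\epsilon M^8$); had these been left as superlinear functions of $\|n_0\|_{L^2}$, linear damping could not absorb them, which is precisely why the paper resorts to Nash's inequality and a barrier argument. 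Your sketch is consistent with the mass-only reduction, but that verification is the load-bearing step and should be written out.
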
	
\begin{proof}
	Divide $ n_{0} $ into $ z $-part zero mode and $ z $-part non-zero mode as follows:
	\begin{equation*}
		n_{0}=n_{(0,0)}+n_{(0,\neq)},
	\end{equation*}
	thus
	\begin{equation}\label{n00 n0 neq}
		\|n_{0}\|_{L^{\infty}L^{2}}\leq \|n_{(0,0)}\|_{L^{\infty}L^{2}}+\|n_{(0,\neq)}\|_{L^{\infty}L^{2}}.
	\end{equation}
	
	Next, we estimate $ \|n_{(0,0)}\|_{L^{\infty}L^{2}} $ and $ \|n_{(0,\neq)}\|_{L^{\infty}L^{2}}, $ respectively.
	
	\subsection{\underline{Estimate of $ \|n_{(0,0)}\|_{L^{\infty}L^{2}} $}}	
	Recall in \eqref{ini2} that $ n_{0} $ satisfies
	\begin{equation}\label{n0 eq}
		\begin{aligned}
			\partial_t n_{0}-\frac{1}{A}\triangle n_{0}=&-\frac{1}{A}\left[\nabla\cdot(n_{\neq}\nabla c_{\neq})_{0}+\partial_{y}(n_{0}\partial_{y}c_{0})+\partial_{z}(n_{0}\partial_{z}c_{0}) \right]\\&-\frac{1}{A}\left[\nabla\cdot(u_{\neq}n_{\neq})_{0}+\partial_{y}(u_{2,0}n_{0})+\partial_{z}(u_{3,0}n_{0}) \right],
		\end{aligned}
	\end{equation}
	and note that $ u_{2,(0,0)}=0 $ due to $ {\rm div}~u=0, $ then $ n_{(0,0)} $ follows:
	\begin{equation*}
		\begin{aligned}
			&\partial_t n_{(0,0)}-\frac{1}{A}\partial_{yy} n_{(0,0)}\\=&-\frac{1}{A}\left[\partial_{y}(n_{\neq}\nabla c_{\neq})_{(0,0)}+\partial_{y}\left(n_{(0,0)}\partial_{y}c_{(0,0)} \right)+\partial_{y}\left(\left(n_{(0,\neq)}\partial_{y}c_{(0,\neq)} \right)_{(0,0)}\right) \right]\\&-\frac{1}{A}\left[\partial_{y}(u_{\neq}n_{\neq})_{(0,0)}+\partial_{y}\left(\left(u_{2,(0,\neq)}n_{(0,\neq)} \right)_{(0,0)} \right) \right].
		\end{aligned}
	\end{equation*}
	Multiplying the above equation by $ 2n_{(0,0)} $ and integrating the resulting equation with $ y $ over $ \mathbb{I}, $ noting that $ n_{(0,0)}|_{y=\pm 1}=0 $ , we obtain 
	\begin{equation}\label{temp1}
		\begin{aligned}
			&\frac{d}{dt}\|n_{(0,0)}\|^2_{L^2}
			+\frac{2}{A}\|\partial_{y} n_{(0,0)}\|^2_{L^2}
			\\=& \frac{2}{A}\int_{\mathbb{I}}(n_{\neq}\nabla c_{\neq})_{(0,0)}\partial_{y}n_{(0,0)}dy+\frac{2}{A}\int_{\mathbb{I}}(u_{\neq}n_{\neq})_{(0,0)}\partial_{y} n_{(0,0)}dy\\&+\frac{2}{A}\int_{\mathbb{I}}n_{(0,0)}\partial_{y}c_{(0,0)}\partial_{y}n_{(0,0)}dy+\frac{2}{A}\int_{\mathbb{I}}\left(n_{(0,\neq)}\partial_{y}c_{(0,\neq)} \right)_{(0,0)}\partial_{y}n_{(0,0)}dy\\&+\frac{2}{A}\int_{\mathbb{I}}\left(u_{2,(0\neq)}n_{(0,\neq)} \right)_{(0,0)}\partial_{y}n_{(0,0)}dy\\\leq&\frac{1}{A}\|\partial_{y}n_{(0,0)}\|_{L^{2}}^{2}+\frac{C}{A}\left(\|n_{\neq}\nabla c_{\neq}\|_{L^{2}}^{2}+\|u_{\neq}n_{\neq}\|_{L^{2}}^{2} \right)+\frac{C}{A}\|n_{(0,0)}\partial_{y}c_{(0,0)}\|_{L^{2}}^{2}\\&+\frac{C}{A}\|\left(n_{(0,\neq)}\partial_{y}c_{(0,\neq)} \right)_{(0,0)}\|_{L^{2}}^{2}+\frac{C}{A}\|\left(u_{2,(0,\neq)}n_{(0,\neq)} \right)_{(0,0)}\|_{L^{2}}^{2}.
		\end{aligned}
	\end{equation}
	Due to \eqref{ini2}, we have
	\begin{equation*}
		-\triangle c_{(0,\neq)}+c_{(0,\neq)}=n_{(0,\neq)}, \quad c_{(0,\neq)}|_{y=\pm 1}=0,
	\end{equation*}
	and
	\begin{equation*}
		-\partial_{yy}c_{(0,0)}+c_{(0,0)}=n_{(0,0)},\quad c_{(0,0)}|_{y=\pm 1}=0,
	\end{equation*}
	and the elliptic estimate gives
	\begin{equation}\label{ellip c 0 neq}
		\|\triangle c_{(0,\neq)}(t)\|_{L^2}^2+\| c_{(0,\neq)}\|_{L^{2}}^{2}
		+\|\nabla c_{(0,\neq)}(t)\|_{L^2}^2\leq C\|n_{(0,\neq)}(t)\|^2_{L^2},
	\end{equation}
	\begin{equation}\label{ellip c 0 neq 1}
		\begin{aligned}
			\|\nabla c_{(0,\neq)}\|_{L^{2}}^{2}+\|c_{(0,\neq)}\|_{L^{2}}^{2}=&\int_{\mathbb{I}\times\mathbb{T}}n_{(0,\neq)}c_{(0,\neq)}dydz\leq\|n_{(0,\neq)}\|_{L^{1}}\|c_{(0,\neq)}\|_{L^{\infty}}\\\leq&C\|n_{(0,\neq)}\|_{L^{1}}\|c_{(0,\neq)}\|_{L^{2}}^{\frac12}\|\triangle c_{(0,\neq)}\|_{L^{2}}^{\frac12}\leq CM\|n_{(0,\neq)}\|_{L^{2}},
		\end{aligned}
	\end{equation}
	and
	\begin{equation}\label{ellip c 00}
		\|\partial_{yy}c_{(0,0)}\|_{L^{2}}^{2}+\|\partial_{y}c_{(0,0)}\|_{L^{2}}^{2}\leq C\|n_{(0,0)}\|_{L^{2}}^{2}.
	\end{equation}
	Using \textbf{Lemma \ref{lemmaa1}}, (\ref{ellip c 0 neq}) and (\ref{ellip c 0 neq 1}), we have
	\begin{equation}\label{n partial_y c}
		\begin{aligned}
			&\|(n_{(0,\neq)}\partial_{y}c_{(0,\neq)})_{(0,0)}\|^2_{L^2}\\\leq&
			C\|n_{(0,\neq)}\|^2_{L^2}\left(\|\partial_{y}c_{(0,\neq)}\|_{L^2}
			\|\partial_{yy}c_{(0,\neq)}\|_{L^2}+\|\partial_{y}c_{(0,\neq)}\|_{L^{2}}^{2} \right)
			\\\leq& CM^{\frac12}\|n_{(0,\neq)}\|_{L^{2}}^{\frac72},
		\end{aligned}
	\end{equation}
	and
	\begin{equation}\label{u2 n}
		\begin{aligned}
			\|(u_{2,(0,\neq)}n_{(0,\neq)})_{(0,0)}\|^2_{L^2}
			&\leq 
			C\|n_{(0,\neq)}\|^2_{L^2}
			\|u_{2,(0,\neq)}\|_{L^2}
			\|\partial_{y}u_{2,(0,\neq)}\|_{L^2}\\
			&\leq C\|n_{(0,\neq)}\|^2_{L^2}\|u_{\rm in,0}\|^2_{H^1},
		\end{aligned}
	\end{equation}
	where we use (\ref{u20 infty 2}) and (\ref{u20' u30'}). By (\ref{ellip c 00}) and interpolation inequality, we get
	\begin{equation}\label{c00 infty}
		\|\partial_{y}c_{(0,0)}\|_{L^{\infty}}\leq C\|\partial_{y}c_{(0,0)}\|_{L^{2}}^{\frac12}\|\partial_{yy}c_{(0,0)}\|_{L^{2}}^{\frac12}+C\|\partial_{y}c_{(0,0)}\|_{L^{2}}\leq C\|n_{(0,0)}\|_{L^{2}},
	\end{equation}
	thus 
	\begin{equation}\label{n c'}
		\|n_{(0,0)}\partial_{y}c_{(0,0)}\|^2_{L^2}\leq \|\partial_{y}c_{(0,0)}\|^2_{L^{\infty}}\|n_{(0,0)}\|^2_{L^2}\leq C
		\|n_{(0,0)}\|^4_{L^2}.
	\end{equation}
	It follows from (\ref{temp1}), (\ref{n partial_y c}), (\ref{u2 n}) and (\ref{n c'}) that
	\begin{equation}\label{n00}
		\begin{aligned}
			\frac{d}{dt}\|n_{(0,0)}\|_{L^{2}}^{2}+\frac{1}{A}\|\partial_{y}n_{(0,0)}\|_{L^{2}}^{2}\leq&\frac{C}{A}\left(\|n_{\neq}\nabla c_{\neq}\|_{L^{2}}^{2}+\|u_{\neq}n_{\neq}\|_{L^{2}}^{2} \right)+\frac{C}{A}\|n_{(0,0)}\|_{L^{2}}^{4}\\&+\frac{C}{A}M^{\frac12}\|n_{(0,\neq)}\|_{L^{2}}^{\frac72}+\frac{C}{A}\|n_{(0,\neq)}\|_{L^{2}}^{2}\|u_{\rm in,0}\|_{H^{1}}^{2}.
		\end{aligned}
	\end{equation}
	Using Nash inequality
	\begin{equation*}
		-\|\partial_{y} n_{(0,0)}\|_{L^{2}}^{2}\leq -\frac{\|n_{(0,0)}\|_{L^{2}}^{6}}{C\|n_{(0,0)}\|_{L^{1}}^{4}}\leq -\frac{\|n_{(0,0)}\|_{L^{2}}^{6}}{CM^{4}},
	\end{equation*}	
	(\ref{n00}) yields 
	\begin{equation}\label{temp3}
		\begin{aligned}
			\frac{d}{dt}\|n_{(0,0)}\|_{L^{2}}^{2}\leq&-\frac{\|n_{(0,0)}\|_{L^{2}}^{6}}{CAM^{4}}+\frac{C}{A}\left(\|n_{\neq}\nabla c_{\neq}\|_{L^{2}}^{2}+\|u_{\neq}n_{\neq}\|_{L^{2}}^{2} \right)\\&+\frac{C}{A}\|n_{(0,0)}\|_{L^{2}}^{4}+\frac{C}{A}M^{\frac12}\|n_{(0,\neq)}\|_{L^{2}}^{\frac72}+\frac{C}{A}\|n_{(0,\neq)}\|_{L^{2}}^{2}\|u_{\rm in,0}\|_{H^{1}}^{2}.
		\end{aligned}
	\end{equation}
	Denote
	\begin{equation*}
		G(t):=\frac{C}{A}\int_{0}^{t}\left(\|n_{\neq}\nabla c_{\neq}\|_{L^{2}}^{2}+\|u_{\neq}n_{\neq}\|_{L^{2}}^{2} \right)ds,\quad{\rm for}\quad{\rm all}\quad t\geq 0.
	\end{equation*}		
	Using \textbf{Lemma \ref{ellip_2}}, assumptions (\ref{assumption_0})-(\ref{assumption_1}) and (\ref{u neq 22}), we note that
	\begin{equation*}
		\begin{aligned}
			G(t)\leq&\frac{C}{A}\|n_{\neq}\|_{L^{\infty}L^{\infty}}^{2}\left(\|n_{\neq}\|_{L^{2}L^{2}}^{2}+\|u_{\neq}\|_{L^{2}L^{2}}^{2} \right)\\\leq&\frac{C}{A}\|n\|_{L^{\infty}L^{\infty}}^{2}\left(A^{\frac16}\|n_{\neq}\|_{Y_{a}}+A^{\frac16}\|\partial_{x}\omega_{2,\neq}\|_{Y_{a}}+\|u_{2,\neq}\|_{X_{a}} \right)^{2}\\\leq&\frac{C}{A^{\frac23}}E_{1}^{2}E_{0}^{2}.
		\end{aligned}
	\end{equation*}
	Letting $ D_{2}:=\left(E_{1}^{2}E_{0}^{2} \right)^{\frac32}, $ choose $ A\geq D_{2}, $ and we arrive at 
	\begin{equation}\label{G(t)}
		G(t)\leq C.
	\end{equation}
	Then we can rewrite (\ref{temp3}) into
	{\small
		\begin{equation*}
			\begin{aligned}
				&\frac{d}{dt}\left(\|n_{(0,0)}\|_{L^{2}}^{2}-G(t) \right)\\\leq&-\frac{\|n_{(0,0)}\|_{L^{2}}^{6}}{CAM^{4}}+\frac{C}{A}\|n_{(0,0)}\|_{L^{2}}^{4}+\frac{C}{A}M^{\frac12}\|n_{(0,\neq)}\|_{L^{2}}^{\frac72}+\frac{C}{A}\|n_{(0,\neq)}\|_{L^{2}}^{2}\|u_{\rm in,0}\|_{H^{1}}^{2}\\=&-\frac{1}{CAM^{4}}\bigg( \|n_{(0,0)}\|_{L^{2}}^{6}-C^{2}M^{4}\|n_{(0,0)}\|_{L^{2}}^{4}-C^{2}M^{\frac92}\|n_{(0,\neq)}\|_{L^{2}}^{\frac72}-C^{2}M^{4}\|n_{(0,\neq)}\|_{L^{2}}^{2}\|u_{\rm in,0}\|_{H^{1}}^{2} \bigg)\\\leq&-\frac{1}{CAM^{4}}\bigg( \frac13\|n_{(0,0)}\|_{L^{2}}^{6}-\frac23(C^{2}M^{4})^{3}-C^{2}M^{\frac92}\|n_{(0,\neq)}\|_{L^{2}}^{\frac72}-C^{2}M^{4}\|n_{(0,\neq)}\|_{L^{2}}^{2}\|u_{\rm in,0}\|_{H^{1}}^{2}\bigg),
			\end{aligned}
		\end{equation*}
	}
	and this implies that
	\begin{equation}\label{n00 result}
		\begin{aligned}
			&\|n_{(0,0)}\|_{L^{\infty}L^{2}}\\\leq& C\left(\|n_{(0,0)}(0)\|_{L^{2}}+M^{2}+M^{\frac34}\|n_{(0,\neq)}\|_{L^{\infty}L^{2}}^{\frac{7}{12}}+M^{\frac23}\|n_{(0,\neq)}\|_{L^{\infty}L^{2}}^{\frac13}\|u_{\rm in,0}\|_{H^{1}}^{\frac13} +1\right)
			\\\leq&C\left(\|n_{(0,0)}(0)\|_{L^{2}}+M^{\frac34}\|n_{(0,\neq)}\|_{L^{\infty}L^{2}}^{\frac{7}{12}}+\|u_{\rm in,0}\|_{H^{1}}^{\frac{14}{13}}+M^{2}+1 \right)
			,
		\end{aligned}
	\end{equation}
	which is similar to the proof of $ (\ref{omega}) $, and we omit it.
	\subsection
	{\underline{Estimate of $ \|n_{(0,\neq)}\|_{L^{\infty}L^{2}} $}}
	From (\ref{n0 eq}), we know $ n_{(0,\neq)} $ satisfies 	
	\begin{equation}\label{n 0neq}
		\begin{aligned}
			&\partial_t n_{(0,\neq)}-\frac{1}{A}\triangle n_{(0,\neq)}\\=&-\frac{1}{A}\left[\nabla\cdot(n_{\neq}\nabla c_{\neq})_{(0,\neq)}+\partial_{y}(n_{0}\partial_{y}c_{0})_{(0,\neq)}+\partial_{z}(n_{0}\partial_{z}c_{0})_{(0,\neq)} \right]\\&-\frac{1}{A}\left[\nabla\cdot(u_{\neq}n_{\neq})_{(0,\neq)}+\partial_{y}(u_{2,0}n_{0})_{(0,\neq)}+\partial_{z}(u_{3,0}n_{0})_{(0,\neq)} \right].
		\end{aligned}
	\end{equation}
	Note that
	\begin{equation*}
		\begin{aligned}
			(n_{0}\partial_{y}c_{0})_{(0,\neq)}
			&=n_{(0,0)}\partial_{y}c_{(0,\neq)}
			+n_{(0,\neq)}\partial_{y}c_{(0,0)}
			+(n_{(0,\neq)}\partial_{y}c_{(0,\neq)})_{(0,\neq)}\\
			&=n_{(0,0)}\partial_{y}c_{(0,\neq)}
			+n_{(0,\neq)}\partial_{y}c_{(0,0)}
			+n_{(0,\neq)}\partial_{y}c_{(0,\neq)}-(n_{(0,\neq)}\partial_{y}c_{(0,\neq)})_{(0,0)};
		\end{aligned}
	\end{equation*}
	\begin{equation*}
		\begin{aligned}
			(n_{0}\partial_{z}c_{0})_{(0,\neq)}
			&=n_{(0,0)}\partial_{z}c_{(0,\neq)}
			+n_{(0,\neq)}\partial_{z}c_{(0,0)}
			+(n_{(0,\neq)}\partial_{z}c_{(0,\neq)})_{(0,\neq)}\\
			&=n_{(0,0)}\partial_{z}c_{(0,\neq)}
			+n_{(0,\neq)}\partial_{z}c_{(0,\neq)}
			-(n_{(0,\neq)}\partial_{z}c_{(0,\neq)})_{(0,0)};
		\end{aligned}
	\end{equation*}
	\begin{equation*}
		\begin{aligned}
			(u_{2,0}n_{0})_{(0,\neq)}
			&=u_{2,(0,0)}n_{(0,\neq)}
			+u_{2,(0,\neq)}n_{(0,0)}
			+(u_{2,(0,\neq)}n_{(0,\neq)})_{(0,\neq)}\\
			&=u_{2,(0,\neq)}n_{(0,0)}
			+u_{2,(0,\neq)}n_{(0,\neq)}
			-(u_{2,(0,\neq)}n_{(0,\neq)})_{(0,0)};
		\end{aligned}
	\end{equation*}	
	and
	\begin{equation*}
		\begin{aligned}
			(u_{3,0}n_{0})_{(0,\neq)}
			&=u_{3,(0,0)}n_{(0,\neq)}
			+u_{3,(0,\neq)}n_{(0,0)}
			+(u_{3,(0,\neq)}n_{(0,\neq)})_{(0,\neq)}\\
			&=u_{3,(0,0)}n_{(0,\neq)}+
			u_{3,(0,\neq)}n_{(0,0)}
			+u_{3,(0,\neq)}n_{(0,\neq)}
			-(u_{3,(0,\neq)}n_{(0,\neq)})_{(0,0)},
		\end{aligned}
	\end{equation*}  
	where we use $u_{2,(0,0)}=0,$
	then (\ref{n 0neq}) implies that
	\begin{equation}\label{n 0 neq}
		\begin{aligned}
			&\partial_t n_{(0,\neq)}-\frac{1}{A}\triangle n_{(0,\neq)}
			\\=&-\frac{1}{A}\left[\nabla\cdot(n_{\neq}\nabla c_{\neq})_{(0,\neq)}
			+\nabla\cdot(u_{\neq}n_{\neq})_{(0,\neq)}\right]\\
			&-\frac{1}{A}\left[\partial_y\big(n_{(0,0)}\partial_{y}c_{(0,\neq)}
			+n_{(0,\neq)}\partial_{y}c_{(0,0)}
			+n_{(0,\neq)}\partial_{y}c_{(0,\neq)}
			-(n_{(0,\neq)}\partial_{y}c_{(0,\neq)})_{(0,0)}\big)\right]\\
			&-\frac{1}{A}\left[\partial_z\big(n_{(0,0)}\partial_{z}c_{(0,\neq)}  
			+n_{(0,\neq)}\partial_{z}c_{(0,\neq)}\big)\right]\\
			&-\frac{1}{A}\left[\partial_y\big(u_{2,(0,\neq)}n_{(0,0)}
			+u_{2,(0,\neq)}n_{(0,\neq)}
			-(u_{2,(0,\neq)}n_{(0,\neq)})_{(0,0)}\big)\right]\\
			&-\frac{1}{A}\left[\partial_z\big(u_{3,(0,0)}n_{(0,\neq)}
			+u_{3,(0,\neq)}n_{(0,0)}
			+u_{3,(0,\neq)}n_{(0,\neq)}\big)\right].
		\end{aligned}
	\end{equation}
	Multiplying (\ref{n 0 neq}) by $ 2n_{(0,\neq)} $ and integrating it over $ (y,z)\in\mathbb{I}\times\mathbb{T}, $ noting that $ n_{(0,\neq)}|_{y=\pm 1}=0, $ one obtain
	\begin{equation}\label{n0 neq}
		\begin{aligned}
			\frac{d}{dt}\|n_{(0,\neq)}\|_{L^{2}}^{2}+\frac{2}{A}\|\nabla n_{(0,\neq)}\|_{L^{2}}^{2}= T_{1}(t)+T_{2}(t),
		\end{aligned}
	\end{equation}
	where
	\begin{equation*}
		T_{1}(t)=-\frac{2}{A}\int_{\mathbb{I}\times\mathbb{T}}\left[\partial_{y}(n_{(0,\neq)}\partial_{y}c_{(0,\neq)})+\partial_{z}(n_{(0,\neq)}\partial_{z}c_{(0,\neq)}) \right]n_{(0,\neq)}dydz,
	\end{equation*}
	and
	\begin{equation}\label{T_2}
		\begin{aligned}
			T_{2}(t)= &-\frac{2}{A}\int_{\mathbb{I}\times\mathbb{T}}\left[\nabla\cdot(n_{\neq}\nabla c_{\neq})_{(0,\neq)}
			+\nabla\cdot(u_{\neq}n_{\neq})_{(0,\neq)} \right]n_{(0,\neq)}dydz
			\\&-\frac{2}{A}\int_{\mathbb{I}\times\mathbb{T}}\left[\partial_{y}\left(n_{(0,0)}\partial_{y}c_{(0,\neq)} \right)+\partial_{z}\left(n_{(0,0)}\partial_{z}c_{(0,\neq)} \right) \right]n_{(0,\neq)}dydz\\&-\frac{2}{A}\int_{\mathbb{I}\times\mathbb{T}}\partial_{y}\left(n_{(0,\neq)}\partial_{y}c_{(0,0)} \right)n_{(0,\neq)}dydz\\&+\frac{2}{A}\int_{\mathbb{I}\times\mathbb{T}}\partial_{y}\left(n_{(0,\neq)}\partial_{y}c_{(0,\neq)} \right)_{(0,0)}n_{(0,\neq)}dydz\\&+\frac{2}{A}\int_{\mathbb{I}\times\mathbb{T}}\partial_{y}\left(u_{2,(0,\neq)}n_{(0,\neq)} \right)_{(0,0)}n_{(0,\neq)}dydz\\&-\frac{2}{A}\int_{\mathbb{I}\times\mathbb{T}}\left[\partial_{y}(u_{2,(0,\neq)}n_{(0,\neq)})+\partial_{z}(u_{3,(0,\neq)}n_{(0,\neq)}) \right]n_{(0,\neq)}dydz\\&-\frac{2}{A}\int_{\mathbb{I}\times\mathbb{T}}\left[\partial_{y}(u_{2,(0,\neq)}n_{(0,0)})+u_{3,(0,0)}\partial_{z}n_{(0,\neq)}+n_{(0,0)}\partial_{z}u_{3,(0,\neq)} \right]n_{(0,\neq)}dydz\\
			:=&T_{21}+\cdots+T_{27}.
		\end{aligned}
	\end{equation}
	For the term of $ T_{1}(t), $ recall $ -\triangle c_{(0,\neq)}=n_{(0,\neq)}-c_{(0,\neq)} $ in $ (\ref{ini2})_{2}$ and note that $ \|c_{(0,\neq)}\|_{L^{2}}^{\frac23}\|\nabla c_{(0,\neq)}\|_{L^{2}}^{\frac13}\leq C\|n_{(0,\neq)}\|_{L^{2}} $ from elliptic estimates, then by the interpolation inequalities
	\ben\label{eq:c*}
	\|n_{(0,\neq)}\|_{L^{3}}\leq C_{*}\|n_{(0,\neq)}\|_{L^{1}}^{\frac13}\|\nabla n_{(0,\neq)}\|_{L^{2}}^{\frac23},\quad \|c_{(0,\neq)}\|_{L^{3}}\leq C\|c_{(0,\neq)}\|_{L^{2}}^{\frac23}\|\nabla c_{(0,\neq)}\|_{L^{2}}^{\frac13},
	\een
	one deduces
	\begin{equation}\label{a7}
		\begin{aligned}
			T_{1}(t)=&\frac{1}{A}\int_{\mathbb{I}\times\mathbb{T}}\left(\partial_{y}c_{(0,\neq)}\partial_{y}n_{(0,\neq)}^{2}+\partial_{z}c_{(0,\neq)}\partial_{z}n_{(0,\neq)}^{2} \right)dydz\\=&-\frac{1}{A}\int_{\mathbb{I}\times\mathbb{T}}n_{(0,\neq)}^{2}\triangle c_{(0,\neq)}dydz\\=&\frac{1}{A}\int_{\mathbb{I}\times\mathbb{T}}n_{(0,\neq)}^{3}dydz-\frac{1}{A}\int_{\mathbb{I}\times\mathbb{T}}n_{(0,\neq)}^{2}c_{(0,\neq)}dydz\\\leq&\frac{C_{*}^{3}}{A}\|n_{(0,\neq)}\|_{L^{1}}\|\nabla n_{(0,\neq)}\|_{L^{2}}^{2}+\frac{1}{A}\|n_{(0,\neq)}\|_{L^{3}}^{2}\|c_{(0,\neq)}\|_{L^{3}}\\\leq&\frac{2C_{*}^{3}M}{A}\|\nabla n_{(0,\neq)}\|_{L^{2}}^{2}+\frac{C}{A}\|n_{(0,\neq)}\|_{L^{1}}^{\frac23}\|\nabla n_{(0,\neq)}\|_{L^{2}}^{\frac43}\|n_{(0,\neq)}\|_{L^{2}}\\\leq&\frac{2C_{*}^{3}M}{A}\|\nabla n_{(0,\neq)}\|_{L^{2}}^{2}+\frac{\delta}{A}\|\nabla n_{(0,\neq)}\|_{L^{2}}^{2}+\frac{C(\delta)}{A}M^{2}\|n_{(0,\neq)}\|_{L^{2}}^{3},
		\end{aligned}
	\end{equation}
	where $ \delta $ is a small positive constant.
	
	For the term of $ T_{2}(t), $  we  deal with the items at the right end of (\ref{T_2}), separately.
	
	{\bf $ \bullet $ Estimate of $ T_{21}$.}  Using integration by parts, H\"{o}lder's and Young inequalities, we obtain
	\begin{equation}\label{a0}
		\begin{aligned}
			T_{21}=&-\frac{2}{A}\int_{\mathbb{I}\times\mathbb{T}}\left[\nabla\cdot(n_{\neq}\nabla c_{\neq})_{(0,\neq)}
			+\nabla\cdot(u_{\neq}n_{\neq})_{(0,\neq)} \right]n_{(0,\neq)}dydz\\\leq&\frac{\delta}{A}\|\nabla n_{(0,\neq)}\|_{L^{2}}^{2}+\frac{C(\delta)}{A}\left(\|n_{\neq}\nabla c_{\neq}\|_{L^{2}}^{2}+\|u_{\neq}n_{\neq}\|_{L^{2}}^{2} \right).
		\end{aligned}
	\end{equation}

	{\bf $ \bullet $ Estimate of $ T_{22}$.}  By {\textbf{Lemma \ref{lemma_001}}} and elliptic estimate, we note that
	$$\|\nabla c_{(0,\neq)}\|_{L^\infty}\leq C\|\partial_z\triangle c_{(0,\neq)}\|^{\epsilon}_{L^2}\|\triangle c_{(0,\neq)}\|^{1-\epsilon}_{L^2}\leq C\|\partial_z n_{(0,\neq)}\|^{\epsilon}_{L^2}\|n_{(0,\neq)}\|^{1-\epsilon}_{L^2},$$
	where $\epsilon\in(0,1],$ there holds
	\begin{equation}\label{a1}
		\begin{aligned}
			T_{22}=&-\frac{2}{A}\int_{\mathbb{I}\times\mathbb{T}}\left[\partial_{y}\left(n_{(0,0)}\partial_{y}c_{(0,\neq)} \right)+\partial_{z}\left(n_{(0,0)}\partial_{z}c_{(0,\neq)} \right) \right]n_{(0,\neq)}dydz\\=&\frac{2}{A}\int_{\mathbb{I}\times\mathbb{T}}\left(n_{(0,0)}\partial_{y}c_{(0,\neq)}\partial_{y}n_{(0,\neq)}+n_{(0,0)}\partial_{z}c_{(0,\neq)}\partial_{z}n_{(0,\neq)} \right)dydz\\\leq&\frac{C}{A}\|\nabla c_{(0,\neq)}\|_{L^{\infty}}\|n_{(0,0)}\|_{L^{2}}\|\nabla n_{(0,\neq)}\|_{L^{2}}\\\leq&\frac{C}{A}\|\partial_z n_{(0,\neq)}\|^{\epsilon}_{L^2}\|n_{(0,\neq)}\|^{1-\epsilon}_{L^2}\|n_{(0,0)}\|_{L^2}
			\|\nabla n_{(0,\neq)}\|_{L^2}\\\leq&\frac{\delta}{A}\|\nabla n_{(0,\neq)}\|_{L^{2}}^{2}+\frac{C(\delta)}{A}\|n_{(0,\neq)}\|_{L^{2}}^{2}\|n_{(0,0)}\|_{L^{2}}^{\frac{2}{1-\epsilon}}.
		\end{aligned}
	\end{equation}
	
	{\bf $ \bullet $ Estimate of $ T_{23}$.}  Using (\ref{c00 infty}), one deduces
	\begin{equation}\label{a2}
		\begin{aligned}
			T_{23}=&	-\frac{2}{A}\int_{\mathbb{I}\times\mathbb{T}}\partial_{y}(n_{(0,\neq)}\partial_{y}c_{(0,0)} )n_{(0,\neq)}dydz\\=&\frac{2}{A}\int_{\mathbb{I}\times\mathbb{T}}n_{(0,\neq)}\partial_{y}c_{(0,0)}\partial_{y}n_{(0,\neq)}dydz\\\leq&\frac{2}{A}\|n_{(0,\neq)}\|_{L^{2}}\|\partial_{y}c_{(0,0)}\|_{L^{\infty}}\|\nabla n_{(0,\neq)}\|_{L^{2}}\\\leq&\frac{C}{A}\|n_{(0,\neq)}\|_{L^{2}}\|n_{(0,0)}\|_{L^{2}}\|\nabla n_{(0,\neq)}\|_{L^{2}}\\\leq&\frac{\delta}{A}\|\nabla n_{(0,\neq)}\|_{L^{2}}^{2}+\frac{C(\delta)}{A}\|n_{(0,\neq)}\|_{L^{2}}^{2}\|n_{(0,0)}\|_{L^{2}}^{2}.
		\end{aligned}
	\end{equation}
	
	{\bf $ \bullet $ Estimate of $ T_{24}$ and $ T_{25}$.}  From (\ref{n partial_y c}) and (\ref{u2 n}), we get
	\begin{equation}\label{a3}
		\begin{aligned}
			T_{24}=&	\frac{2}{A}\int_{\mathbb{I}\times\mathbb{T}}\partial_{y}\left(n_{(0,\neq)}\partial_{y}c_{(0,\neq)} \right)_{(0,0)}n_{(0,\neq)}dydz\\\leq&\frac{2}{A}\|\left(n_{(0,\neq)}\partial_{y}c_{(0,\neq)} \right)_{(0,0)}\|_{L^{2}}\|\nabla n_{(0,\neq)}\|_{L^{2}}\\\leq&\frac{C}{A}M^{\frac14}\|n_{(0,\neq)}\|_{L^{2}}^{\frac74}\|\nabla n_{(0,\neq)}\|_{L^{2}}\leq\frac{\delta}{A}\|\nabla n_{(0,\neq)}\|_{L^{2}}^{2}+\frac{C(\delta)}{A}M^{\frac12}\|n_{(0,\neq)}\|_{L^{2}}^{\frac72},
		\end{aligned}
	\end{equation}
	and
	\begin{equation}\label{a4}
		\begin{aligned}
			T_{25}=&\frac{2}{A}\int_{\mathbb{I}\times\mathbb{T}}\partial_{y}\left(u_{2,(0,\neq)}n_{(0,\neq)} \right)_{(0,0)}n_{(0,\neq)}dydz\\\leq&\frac{2}{A}\|\left(u_{2,(0,\neq)}n_{(0,\neq)} \right)_{(0,0)}\|_{L^{2}}\|\nabla n_{(0,\neq)}\|_{L^{2}}\\\leq&\frac{C}{A}\|u_{\rm in,0}\|_{H^{1}}\|n_{(0,\neq)}\|_{L^{2}}\|\nabla n_{(0,\neq)}\|_{L^{2}}\\\leq&\frac{\delta}{A}\|\nabla n_{(0,\neq)}\|_{L^{2}}^{2}+\frac{C(\delta)}{A}\|u_{\rm in,0}\|_{H^{1}}^{2}\|n_{(0,\neq)}\|_{L^{2}}^{2}.
		\end{aligned}
	\end{equation}
	
	{\bf $ \bullet $ Estimate of $ T_{26}$.}   Due to $ {\rm  div}~u_{(0,\neq)}=\partial_{y}u_{2,(0,\neq)}+\partial_{z}u_{3,(0,\neq)}=0,$  using integration by parts, there holds
	\begin{equation}\label{a5}
		\begin{aligned}
			T_{26}&-\frac{2}{A}\int_{\mathbb{I}\times\mathbb{T}}\left[\partial_{y}(u_{2,(0,\neq)}n_{(0,\neq)})+\partial_{z}(u_{3,(0,\neq)}n_{(0,\neq)} ) \right]n_{(0,\neq)}dydz\\=&-\frac{2}{A}\int_{\mathbb{I}\times\mathbb{T}}\left(u_{2,(0,\neq)}\partial_{y}n_{(0,\neq)}n_{(0,\neq)}+u_{3,(0,\neq)}\partial_{z}n_{(0,\neq)}n_{(0,\neq)} \right)dydz\\=&\frac{1}{A}\int_{\mathbb{I}\times\mathbb{T}}n_{(0,\neq)}^{2}\left(\partial_{y}u_{2,(0,\neq)}+\partial_{z}u_{3,(0,\neq)} \right)dydz=0.
		\end{aligned}
	\end{equation}
	
	{\bf $ \bullet $ Estimate of $ T_{27}$.}   Using (\ref{assumption_1}) and (\ref{u}), if $ A\geq E_{1}, $ we get
	\begin{equation}\label{a6}
		\begin{aligned}
			T_{27}=&-\frac{2}{A}\int_{\mathbb{I}\times\mathbb{T}}\left[\partial_{y}(u_{2,(0,\neq)}n_{(0,0)})+u_{3,(0,0)}\partial_{z}n_{(0,\neq)}+n_{(0,0)}\partial_{z}u_{3,(0,\neq)} \right]n_{(0,\neq)}dydz\\=&\frac{2}{A}\int_{\mathbb{I}\times\mathbb{T}}\left[u_{2,(0,\neq)}n_{(0,0)}\partial_{y}n_{(0,\neq)}-u_{3,(0,0)}\partial_{z}n_{(0,\neq)}n_{(0,\neq)}+n_{(0,0)}u_{3,(0,\neq)}\partial_{z}n_{(0,\neq)} \right]dydz\\\leq&\frac{C}{A}\|n\|_{L^{\infty}L^{\infty}}\left(\|u_{2,0}\|_{L^{\infty}L^{2}}+\| u_{3,0}\|_{L^{\infty}L^{2}} \right)\|\nabla n_{(0,\neq)}\|_{L^{2}}\\\leq&\frac{CE_{1}}{A^{2}}A\left(\|u_{2,0}(0)\|_{L^{2}}+\|u_{3,0}(0)\|_{L^{2}} \right)\|\nabla n_{(0,\neq)}\|_{L^{2}}\\\leq& \frac{C}{A}\|\nabla n_{(0,\neq)}\|_{L^{2}}\leq\frac{\delta}{A}\|\nabla n_{(0,\neq)}\|_{L^{2}}^{2}+\frac{C(\delta)}{A}.
		\end{aligned}
	\end{equation}
	
	Collecting (\ref{T_2}) and (\ref{a0})-(\ref{a6}), we arrive at
	\begin{equation}\label{T2}
		\begin{aligned}
			T_{2}(t)\leq&\frac{6\delta}{A}\|\nabla n_{(0,\neq)}\|_{L^{2}}^{2}+\frac{C(\delta)}{A}\left(\|n_{\neq}\nabla c_{\neq}\|_{L^{2}}^{2}+\|u_{\neq}n_{\neq}\|_{L^{2}}^{2} \right)\\&+\frac{C(\delta)}{A}\|n_{(0,\neq)}\|_{L^{2}}^{2}\|n_{(0,0)}\|_{L^{2}}^{\frac{2}{1-\epsilon}}+\frac{C(\delta)}{A}\|n_{(0,\neq)}\|_{L^{2}}^{2}\|n_{(0,0)}\|_{L^{2}}^{2}\\&+\frac{C(\delta)}{A}M^{\frac12}\|n_{(0,\neq)}\|_{L^{2}}^{\frac72}+\frac{C(\delta)}{A}\|u_{\rm in,0}\|_{H^{1}}^{2}\|n_{(0,\neq)}\|_{L^{2}}^{2}+\frac{C(\delta)}{A}.
		\end{aligned}
	\end{equation}
	Then it follows from (\ref{n0 neq}), (\ref{a7}) and (\ref{T2}) that
	\begin{equation}\label{n0 neq 1}
		\begin{aligned}
			&\frac{d}{dt}\|n_{(0,\neq)}\|_{L^{2}}^{2}+\frac{2}{A}\|\nabla n_{(0,\neq)}\|_{L^{2}}^{2}\\\leq&\frac{2C_{*}^{3}M}{A}\|\nabla n_{(0,\neq)}\|_{L^{2}}^{2}+\frac{7\delta}{A}\|\nabla n_{(0,\neq)}\|_{L^{2}}^{2}+\frac{C(\delta)}{A}\left(\|n_{\neq}\nabla c_{\neq}\|_{L^{2}}^{2}+\|u_{\neq}n_{\neq}\|_{L^{2}}^{2} \right)\\&+\frac{C(\delta)}{A}\|n_{(0,\neq)}\|_{L^{2}}^{2}\|n_{(0,0)}\|_{L^{2}}^{\frac{2}{1-\epsilon}}+\frac{C(\delta)}{A}\|n_{(0,\neq)}\|_{L^{2}}^{2}\|n_{(0,0)}\|_{L^{2}}^{2}\\&+\frac{C(\delta)}{A}M^{\frac12}\|n_{(0,\neq)}\|_{L^{2}}^{\frac72}+\frac{C(\delta)}{A}\|u_{\rm in,0}\|_{H^{1}}^{2}\|n_{(0,\neq)}\|_{L^{2}}^{2}\\&+\frac{C(\delta)}{A}+\frac{C(\delta)}{A}M^{2}\|n_{(0,\neq)}\|_{L^{2}}^{3}.
		\end{aligned}
	\end{equation}
	Noting that $ C_{*}^{3}M<1, $ then letting $ 7\delta=1-C_{*}^{3}M $ and substituting (\ref{n00 result}) into (\ref{n0 neq 1}), (\ref{n0 neq 1}) yields
	\begin{equation}\label{n0 neq 2}
		\begin{aligned}
			&\frac{d}{dt}\|n_{(0,\neq)}\|_{L^{2}}^{2}+\frac{7\delta}{A}\|\nabla n_{(0,\neq)}\|_{L^{2}}^{2}\\\leq&\frac{C}{A}\left(\|n_{\neq}\nabla c_{\neq}\|_{L^{2}}^{2}+\|u_{\neq}n_{\neq}\|_{L^{2}}^{2} \right)+\frac{C}{A}\|n_{(0,\neq)}\|_{L^{2}}^{2}\|n_{(0,0)}\|_{L^{2}}^{\frac{2}{1-\epsilon}}\\&+\frac{C}{A}\|n_{(0,\neq)}\|_{L^{2}}^{2}\|n_{(0,0)}\|_{L^{2}}^{2}+\frac{C}{A}M^{\frac12}\|n_{(0,\neq)}\|_{L^{2}}^{\frac72}+\frac{C}{A}\|u_{\rm in,0}\|_{H^{1}}^{2}\|n_{(0,\neq)}\|_{L^{2}}^{2}\\&+\frac{C}{A}+\frac{C}{A}M^{2}\|n_{(0,\neq)}\|_{L^{2}}^{3}\\\leq&\frac{C}{A}\left(\|n_{\neq}\nabla c_{\neq}\|_{L^{2}}^{2}+\|u_{\neq}n_{\neq}\|_{L^{2}}^{2} \right)+\frac{C}{A}M^{\frac{3}{2(1-\varepsilon)}}\|n_{(0,\neq)}\|_{L^{2}}^{\frac{19-12\varepsilon}{6(1-\varepsilon)}}\\&+\frac{C}{A}
			\left(1+\|n_{(0,0)}(0)\|_{L^{2}}^{\frac{2}{1-\varepsilon}}+\|u_{\rm in.0}\|_{H^{1}}^{\frac{28}{13(1-\varepsilon)}}+M^{\frac{4}{1-\varepsilon}} \right)\|n_{(0,\neq)}\|_{L^{2}}^{2}\\&+\frac{C}{A}M^{\frac12}\|n_{(0,\neq)}\|_{L^{2}}^{\frac72}+\frac{C}{A}+\frac{C}{A}M^{2}\|n_{(0,\neq)}\|_{L^{2}}^{3},
		\end{aligned}
	\end{equation}
	where we used $ \frac{19-12\epsilon}{6(1-\epsilon)}<4, $ i.e $ 0<\epsilon<\frac{5}{12}. $ Without losing generality, we choose $ \varepsilon=\frac{1}{6} $.  From (\ref{G(t)}), we find
	\begin{equation*}
		G(t)=\frac{C}{A}\int_{0}^{t}\left(\|n_{\neq}\nabla c_{\neq}\|_{L^{2}}^{2}+\|u_{\neq}n_{\neq}\|_{L^{2}}^{2} \right)ds\leq C,
	\end{equation*}
	providing $ A\geq \left(E_{1}^{2}E_{0}^{2} \right)^{\frac32}, $ and using Nash inequality
	\begin{equation*}
		-\|\nabla n_{(0,\neq)}\|_{L^{2}}^{2}\leq -\frac{\|n_{(0,\neq)}\|_{L^{2}}^{4}}{C\|n_{(0,\neq)}\|_{L^{1}}^{2}}\leq -\frac{\|n_{(0,\neq)}\|_{L^{2}}^{4}}{CM^{2}},
	\end{equation*}
	then we rewrite (\ref{n0 neq 2}) as follows:
	\beno
	&&\frac{d}{dt}\left(\|n_{(0,\neq)}\|_{L^{2}}^{2}-G(t) \right)\\&\leq&-\frac{7\delta}{CAM^{2}}\|n_{(0,\neq)}\|_{L^{2}}^{4} +\frac{C}{A}M^{\frac{9}{5}}\|n_{(0,\neq)}\|_{L^{2}}^{\frac{17}{5}}+\frac{C}{A}M^{\frac12}\|n_{(0,\neq)}\|_{L^{2}}^{\frac72}+\frac{C}{A}M^{2}\|n_{(0,\neq)}\|_{L^{2}}^{3}\\&&+\frac{C}{A}
	\left(1+\|n_{(0,0)}(0)\|_{L^{2}}^{\frac{12}{5}}+\|u_{\rm in.0}\|_{H^{1}}^{\frac{168}{65}}+M^{\frac{24}{5}} \right)\|n_{(0,\neq)}\|_{L^{2}}^{2}+\frac{C}{A}\\
	&=&-\frac{7\delta}{4CAM^{2}}\biggl[
	4\|n_{(0,\neq)}\|_{L^{2}}^{4}-\frac{4C^{2}M^{2+\frac95}}{7\delta}\|n_{(0,\neq)}\|_{L^{2}}^{\frac{17}{5}}-\frac{4C^{2}M^{2+\frac12}}{7\delta}\|n_{(0,\neq)}\|_{L^{2}}^{\frac72}	
	\\&&-\frac{4C^{2}M^{2}}{7\delta}\left(1+\|n_{(0,0)}(0)\|_{L^{2}}^{\frac{12}{5}}+\|u_{\rm in,0}\|_{H^{1}}^{\frac{168}{65}}+M^{\frac{24}{5}} \right)\|n_{(0,\neq)}\|_{L^{2}}^{2}\\&&-\frac{4C^{2}M^{2+2}}{7\delta}\|n_{(0,\neq)}\|_{L^{2}}^{3}-\frac{4C^{2}M^{2}}{7\delta}
	\biggr],
	\eeno
	which is controlled by
	\beno
	&&\-\frac{7\delta}{4CAM^{2}}\biggl\{ \frac{41}{40}\|n_{(0,\neq)}\|_{L^{2}}^{4}-\frac{3}{20}\left(\frac{4C^{2}M^{\frac{19}{5}}}{7\delta} \right)^{\frac{20}{3}}-\frac18\left(\frac{4C^{2}M^{\frac52}}{7\delta} \right)^{8}\\&&-\frac12\left[\frac{4C^{2}M^{2}}{7\delta}\left(1+\|n_{(0,0)}(0)\|_{L^{2}}^{\frac{12}{5}}+\|u_{\rm in,0}\|_{H^{1}}^{\frac{168}{65}}+M^{\frac{24}{5}} \right)\right]^{2}\\&&-\frac14\left(\frac{4C^{2}M^{4}}{7\delta} \right)^{4}-\frac{4C^{2}M^{2}}{7\delta}
	\biggr\}.
	\eeno
	Notice that $ 7\delta=1-C_{*}^{3}M  $, similar to the proof of $ (\ref{omega}), $ and the above inequality indicates that
	\begin{equation}\label{n0 neq result}
		\begin{aligned}
			&\|n_{(0,\neq)}\|_{L^{\infty}L^{2}}\\\leq& \frac{C}{\left(1-C_{*}^{3}M \right)^{2}}\left(\|n_{(0,\neq)}(0)\|_{L^{2}}+\|n_{(0,0)}(0)\|_{L^{2}}^{2}+\|u_{\rm in,0}\|_{H^{1}}^{2}+M^{7}+1 \right).
		\end{aligned}
	\end{equation}
	Recalling (\ref{n00 result}), there holds
	\begin{equation*}
		\|n_{(0,0)}\|_{L^{\infty}L^{2}}\leq C\left(\|n_{(0,0)}(0)\|_{L^{2}}+\|n_{(0,\neq)}\|_{L^{2}}+\|u_{\rm in,0}\|_{H^{1}}^{\frac{14}{13}}+M^{2}+1 \right).
	\end{equation*}
	Substituting  (\ref{n0 neq result}) into the above inequality, we obtain
	\begin{equation}\label{n00 result1}
		\begin{aligned}
			\|n_{(0,0)}\|_{L^{\infty}L^{2}}\leq \frac{C}{\left(1-C_{*}^{3}M \right)^{2}}\left(\|n_{\rm in,0}\|_{L^{2}}^{2}+\|u_{\rm in,0}\|_{H^{1}}^{2}+M^{7}+1 \right).
		\end{aligned}
	\end{equation}
	Then combining (\ref{n00 n0 neq}), (\ref{n0 neq result}) with (\ref{n00 result1}), we arrive at
	\begin{equation*}
		\begin{aligned}
			\|n_0\|_{L^{\infty}L^{2}}\leq \frac{C}{\left(1-C_{*}^{3}M \right)^{2}}\left(\|n_{\rm in,0}\|_{L^{2}}^{2}+\|u_{\rm in,0}\|_{H^{1}}^{2}+M^{7}+1 \right)=:H_{2}.
		\end{aligned}
	\end{equation*}

	The proof is complete.		
\end{proof}

\section{The $L^\infty $ estimate of the density and proof of Proposition \ref{prop 2}}\label{sec 6}
\begin{proof}[Proof of Propsition \ref{prop 2}]
	For $ p=2^{j} $ with $ j\geq 1, $ multiplying $ (\ref{ini2})_{1} $ by $2pn^{2p-1}$, and integrating by parts the resulting equation over $\mathbb{T}\times\mathbb{I}\times\mathbb{T}$, one deduces
	\begin{equation}
		\begin{aligned}
			\frac{d}{dt}||n^p||^2_{L^2}+\frac{2(2p-1)}{Ap}||\nabla n^{p}||_{L^2}^2
			&=\frac{2(2p-1)}{A}\int_{\mathbb{T}\times\mathbb{I}\times\mathbb{T}}n^{p}\nabla c\cdot\nabla n^{p}dxdydz \\
			&\leq\frac{2(2p-1)}{A}||n^p\nabla c||_{L^2}||\nabla n^p||_{L^2} \\
			&\leq\frac{2p-1}{Ap}||\nabla n^p||_{L^2}^2
			+\frac{(2p-1)p}{A}||n^p\nabla c||^2_{L^2}.\nonumber
		\end{aligned}
	\end{equation}
	Using H\"{o}lder  and Nash inequalities
	\begin{equation}
		\begin{aligned}
			||n^p\nabla c||_{L^2}^2
			\leq ||n^p||_{L^4}^2||\nabla c||_{L^4}^2
			\leq C||n^p||_{L^2}^{\frac{1}{2}}||\nabla n^p||_{L^2}^\frac{3}{2}||\nabla c||_{L^4}^2, \nonumber
		\end{aligned}
	\end{equation}
	we get
	\begin{equation}
		\begin{aligned}
			&\frac{d}{dt}||n^p||^2_{L^2}+\frac{2(2p-1)}{Ap}||\nabla n^{p}||_{L^2}^2 \\
			\leq&\frac{2p-1}{Ap}||\nabla n^p||_{L^2}^2
			+\frac{C(2p-1)p}{A}||n^p||_{L^2}^{\frac{1}{2}}||\nabla n^p||_{L^2}^\frac{3}{2}||\nabla c||_{L^4}^2 \\
			\leq&\frac{5(2p-1)}{4Ap}||\nabla n^p||_{L^2}^2
			+\frac{C(2p-1)p^7}{A}||n^p||_{L^2}^2||\nabla c||_{L^4}^8.\nonumber
		\end{aligned}
	\end{equation}
	Consequently
\begin{equation}
	\begin{aligned}
		\frac{d}{dt}||n^p||^2_{L^2}+\frac{1}{2A}||\nabla n^{p}||_{L^2}^2\leq
		\frac{Cp^8}{A}||n^p||_{L^2}^2||\nabla c||_{L^4}^8.
		\label{58}
	\end{aligned}
\end{equation}
Using the Nash inequality again\begin{equation}
	\begin{aligned}
		||n^p||_{L^2}
		\leq C||n^p||_{L^1}^{\frac{2}{5}}||\nabla n^p||_{L^2}^{\frac{3}{5}},
		\nonumber
	\end{aligned}
\end{equation}
we infer from (\ref{58}) that
\begin{equation}
	\begin{aligned}
		\frac{d}{dt}||n^p||^2_{L^2}
		\leq-\frac{||n^p||_{L^2}^{\frac{10}{3}}}{2AC||n^p||_{L^1}^{\frac{4}{3}}}
		+\frac{Cp^8}{A}||n^p||_{L^2}^2||\nabla c||_{L^{\infty}L^4}^8.
		\nonumber
	\end{aligned}
\end{equation}
Applying \textbf{Lemma \ref{ellip_0}}, \textbf{Lemma \ref{ellip_2}} and \textbf{Proposition  \ref{priori0}}, there holds
\begin{equation*}
	\begin{aligned}
		||\nabla c||_{L^{\infty}L^4}\leq& ||\nabla c_{\neq}||_{L^{\infty}L^4}+||\nabla c_0||_{L^{\infty}L^4}
		\\\leq& C\left(\|n_{\neq}\|_{L^{\infty}L^{2}}+\|n_{0}\|_{L^{\infty}L^{2}} \right) \leq C\left(E_{0}+H_{2} \right).
	\end{aligned}
\end{equation*}
Therefore
\begin{equation}\label{np}
	\begin{aligned}
		\frac{d}{dt}||n^p||^2_{L^2}
		\leq-\frac{||n^p||^{\frac{10}{3}}_{L^2}}{2CA||n^p||_{L^1}^{\frac{4}{3}}}
		+\frac{Cp^8}{A}||n^p||^2_{L^2}(E_{0}^8+H_{2}^8).
	\end{aligned}
\end{equation}

Claim that
\begin{equation}
	\begin{aligned}
		\sup_{t\geq 0}||n^p||_{L^2}^2\leq \max\Big\{8C^3(E_{0}^{8}+H_{2}^{8})^{\frac32}p^{12}\sup_{t\geq0}||n^p||^2_{L_1}, 2||n_{\rm in}^p||_{L^2}^2\Big\}\label{n1_1}.
	\end{aligned}
\end{equation}
Otherwise, there must exist $ t=\tilde{t}>0 $ such that
\begin{equation}\label{t}
	\|n^{p}(\tilde{t})\|_{L^{2}}^{2}=\max\{8C^{3}(E_{0}^{8}+H_{2}^{8})^{\frac32}p^{12}\|n^{p}(\tilde{t})\|_{L^{1}}^{2},2\|n_{\rm in}^{p}\|_{L^{2}}^{2} \},
\end{equation}
and
\begin{equation}\label{t 1}
	\frac{d}{dt}\left(\|n^{p}(t)\|_{L^{2}}^{2} \right)|_{t=\tilde{t}}\geq 0.
\end{equation}
According to (\ref{np}) and (\ref{n1_1}), we have
\begin{equation}\label{t 2}
	\begin{aligned}
		&\frac{d}{dt}\left(\|n^{p}(t)\|_{L^{2}}^{2} \right)|_{t=\tilde{t}}\\\leq&-\|n^{p}(\tilde{t})\|_{L^{2}}^{2}\left[\frac{\|n^{p}(\tilde{t})\|_{L^{2}}^{\frac43}}{2CA\|n^{p}(\tilde{t})\|_{L^{1}}^{\frac43}}+\frac{Cp^{8}}{A}(E_{0}^{8}+H_{2}^{8}) \right]\\\leq&-\|n^{p}(\tilde{t})\|_{L^{2}}^{2}\left[\frac{\left[8C^{3}(E_{0}^{8}+H_{2}^{8})^{\frac32}p^{12}\|n^{p}(\tilde{t})\|_{L^{1}}^{2} \right]^{\frac23}}{2CA\|n^{p}(\tilde{t})\|_{L^{1}}^{\frac43}}-\frac{Cp^{8}}{A}(E_{0}^{8}+H_{2}^{8}) \right]\\\leq&-\|n^{p}(\tilde{t})\|_{L^{2}}^{2}\frac{Cp^{8}(E_{0}^{8}+H_{2}^{8})}{A}<0,
	\end{aligned}
\end{equation}
which is a contradiction due to (\ref{t 1}) and (\ref{t 2}). Thus, (\ref{n1_1}) is proved.

Next, the Moser-Alikakos iteration is used to determine $E_{1}$. Recall $ p=2^{j} $ with $ j\geq 1, $
and we rewrite (\ref{n1_1}) into
\begin{equation}
	\begin{aligned}
		&\sup_{t\geq 0}\int_{\mathbb{T}\times\mathbb{I}\times\mathbb{T}}|n(t)|^{2^{j+1}}dxdydz \\
		\leq& \max\Big\{C_{1}\Big(\sup_{t\geq0}\int_{\mathbb{T}\times\mathbb{I}\times\mathbb{T}}|n(t)|^{2^{j}}dxdydz\Big)^2, 2\int_{\mathbb{T}\times\mathbb{I}\times\mathbb{T}}|n_{\rm in}|^{2^{j+1}}dxdydz\Big\},\label{n1_2}
	\end{aligned}
\end{equation}
where $C_1=8C^3(E_{0}^{8}+H_{2}^{8})^{\frac32}.$
From (\ref{t1}), we note that
\begin{equation*}
	\|n_{0}\|_{L^{\infty}L^{2}}\leq H_{2}.
\end{equation*}
Therefore $$\sup_{t\geq0}||n(t)||_{L^2}\leq|\mathbb{T}| ||n_{0}||_{L^\infty L^2}+||n_{\neq}||_{L^\infty L^2}\leq |\mathbb{T}|H_2+E_0,$$
and by interpolation, for $0<\theta<1$, we have
$$||n_{\rm in}||_{L^{2^j}}\leq||n_{\rm in}||^{\theta}_{L^2}
||n_{\rm in}||^{1-\theta}_{L^\infty}
\leq||n_{\rm in}||_{L^2}+||n_{\rm in}||_{L^\infty}\leq|\mathbb{T}|H_2+E_0+||n_{\rm in}||_{L^\infty},$$
for $j\geq1,$
which yields
$$2\int_{\mathbb{T}\times\mathbb{I}\times\mathbb{T}}|n(0)|^{2^{j+1}}dxdydz
\leq2\Big(|\mathbb{T}|H_2+E_0+||n_{\rm in}||_{L^\infty}\Big)^{2^{j+1}}\leq K^{2^{j+1}},$$
where $K=2(|\mathbb{T}|H_2+E_0+||n_{\rm in}||_{L^\infty}).$

We infer from (\ref{n1_2}) that
\begin{equation}
	\begin{aligned}
		\sup_{t\geq0}\int_{\mathbb{T}\times\mathbb{I}\times\mathbb{T}}|n(t)|^{2^{j+1}}dxdydz\leq \max\Big\{C_1	4096^{j}\Big(\sup_{t\geq0}\int_{\mathbb{T}\times\mathbb{I}\times\mathbb{T}}|n(t)|^{2^{j}}dxdydz\Big)^2, K^{2^{j+1}} \Big\}.\nonumber
	\end{aligned}
\end{equation}

When $j=1$, there holds
\begin{equation}
	\begin{aligned}
		\sup_{t\geq0}\int_{\mathbb{T}\times\mathbb{I}\times\mathbb{T}}|n(t)|^{2^{2}}dxdydz\leq C_1^{a_1}	4096^{b_1}K^{2^{2}},\nonumber
	\end{aligned}
\end{equation}
where $a_1=1$ and $b_1=1$.

When $j=2$, we have
\begin{equation}
	\begin{aligned}
		\sup_{t\geq0}\int_{\mathbb{T}\times\mathbb{I}\times\mathbb{T}}|n(t)|^{2^{3}}dxdydz\leq C_1^{a_2}	4096^{b_2}K^{2^{3}},\nonumber
	\end{aligned}
\end{equation}
where $a_2=1+2a_1$ and $b_2=2+2b_1$.

When $j=k$, we get
\begin{equation}
	\begin{aligned}
		\sup_{t\geq0}\int_{\mathbb{T}\times\mathbb{I}\times\mathbb{T}}|n(t)|^{2^{k+1}}dxdydz\leq C_1^{a_k}	4096^{b_k}K^{2^{k+1}},\nonumber
	\end{aligned}
\end{equation}
where $a_k=1+2a_{k-1}$ and $b_k=k+2b_{k-1}$.

Generally, one can obtain the following formulas
$$a_k=2^k-1,\ {\rm and}\ \ b_k=2^{k+1}-k-2.$$

Therefore,  one obtains
\begin{equation}
	\begin{aligned}
		\sup_{t\geq0}\left(\int_{\mathbb{T}\times\mathbb{I}\times\mathbb{T}}|n(t)|^{2^{k+1}}dxdydz\right)^{\frac{1}{2^{k+1}}}\leq C_1^{\frac{2^k-1}{2^{k+1}}}	4096^{\frac{2^{k+1}-k-2}{2^{k+1}}}K.\nonumber
	\end{aligned}
\end{equation}
Letting $k\rightarrow\infty$, there holds
$$\sup_{t\geq0}\|n(t)\|_{L^\infty}\leq C(E_{0}^{8}+H_{2}^{8})^{\frac{3}{4}}(|\mathbb{T}|H_2+E_0+||n_{\rm in}||_{L^\infty}):=E_{1}.$$

The proof is complete.
\end{proof}

\section{Proof of Theorem \ref{result 1}}\label{sec 7}
\begin{proof}[Proof of Theorem \ref{result 1}]
Recall (\ref{a7}) in the proof of {\textbf{Proposition \ref{priori0}}}, where we use the interpolation inequality
\begin{equation*}
	\|n_{(0,\neq)}\|_{L^{3}}\leq C_{*}\|n_{(0,\neq)}\|_{L^{1}}^{\frac13}\|\nabla n_{(0,\neq)}\|_{L^{2}}^{\frac23},
\end{equation*}
and  $ C_{*}^{3}M<1. $  From {\textbf{Lemma \ref{A1}}},  notice that the condition regarding the initial cell mass can be replaced by
$ \frac94M<1. $ Then combining it with {\textbf{Theorem \ref{result}}}, the proof is complete.

\end{proof}

\appendix
\section{}
We introduce the following time-space estimates with non-slip boundary condition and Navier-slip boundary condition, respectively, which play an important role in the estimates of the non-zero modes of the solution to the system (\ref{ini2})-(\ref{ini2_1}). 
\begin{proposition}[Proposition 10.1 in \cite{Chen1}]\label{slip}
Let $ f $ be a solution of 
\begin{equation*}
	\left\{
	\begin{array}{lr}
		\partial_tf-\frac{1}{A}\left(\partial_{y}^{2}-\eta^{2} \right)f+ik_{1}yf=-ik_{1}f_{1}-\partial_{y}f_{2}-ik_{3}f_{3}-f_{4}, \\
		f|_{y=\pm 1}=0,~~f|_{t=0}=f_{\rm in}, 
	\end{array}
	\right.
\end{equation*}
with $ f_{4}(t, \pm 1)=0 $ and $ f_{\rm in}(\pm 1)=0. $ Then there holds
\begin{equation*}
	\begin{aligned}
		&\|e^{aA^{-\frac13}t}f\|_{L^{\infty}L^{2}}^{2}+\frac{1}{A}\|e^{aA^{-\frac13}t}\partial_{y}f\|_{L^{2}L^{2}}^{2}+\big(A^{-1}\eta^{2}+(A^{-1}k_{1}^{2})^{\frac13} \big)\|e^{aA^{-\frac13}t}f\|_{L^{2}L^{2}}^{2}\\\leq&C\big(\|f_{\rm in}\|_{L^{2}}^{2}+A\|e^{aA^{-\frac13}t}f_{2}\|_{L^{2}L^{2}}^{2}+(\eta|k_{1}|)^{-1}\|e^{aA^{-\frac13}t}\partial_{y}f_{4}\|_{L^{2}L^{2}}^{2}+\eta|k_{1}|^{-1}\|e^{aA^{-\frac13}t}f_{4}\|_{L^{2}L^{2}}^{2}\\&+\min\{(A^{-1}\eta^{2})^{-1}, (A^{-1}k_{1}^{2})^{-\frac13} \}\|e^{aA^{-\frac13}t}(k_{1}f_{1}+k_{3}f_{3} )\|_{L^{2}L^{2}}^{2} \big),
	\end{aligned}
\end{equation*}	
where ``a'' is a non-negative constant, and $  f_{1}, f_{2}, f_{3}, f_{4} $ are given functions.	
\end{proposition}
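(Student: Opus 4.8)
The plan is to read the equation as a forced version of the linearized Couette operator $\mathcal{L}_{k_1,\eta}=-\frac1A(\partial_y^2-\eta^2)+ik_1y$ on $\mathbb{I}$ with Dirichlet data, and to extract from it both the ordinary parabolic dissipation and the enhanced dissipation of rate $(A^{-1}k_1^2)^{\frac13}$. First I would remove the exponential weight by setting $\phi=e^{aA^{-\frac13}t}f$, which solves the same equation with the extra zeroth-order term $-aA^{-\frac13}\phi$ and forcing $G=e^{aA^{-\frac13}t}(-ik_1f_1-\partial_yf_2-ik_3f_3-f_4)$. A basic energy identity — multiplying by $\bar\phi$, integrating over $\mathbb{I}$ and taking the real part — already gives
\begin{equation*}
	\frac12\frac{d}{dt}\|\phi\|_{L^2}^2+\frac1A\|\partial_y\phi\|_{L^2}^2+\frac{\eta^2}{A}\|\phi\|_{L^2}^2-aA^{-\frac13}\|\phi\|_{L^2}^2=\mathrm{Re}\langle G,\phi\rangle,
\end{equation*}
because the transport term $ik_1y$ is skew-adjoint and drops out of the real part. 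This controls $\|e^{aA^{-\frac13}t}f\|_{L^\infty L^2}$ and the $\frac1A\|\partial_y\phi\|^2$ part of the left-hand side, and, after integrating by parts in $y$ (legitimate since $\phi|_{y=\pm1}=0$), it lets me absorb the $\partial_yf_2$ forcing at the cost of $A\|e^{aA^{-\frac13}t}f_2\|_{L^2L^2}^2$, which is precisely the weight appearing on the right.

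The essential remaining point is to upgrade the coercive term from $\frac{\eta^2}{A}\|\phi\|^2$ so as to include the enhanced rate $(A^{-1}k_1^2)^{\frac13}\|\phi\|^2$; this is what absorbs the weight-induced growth $aA^{-\frac13}\|\phi\|^2$ for $a$ small, since $(A^{-1}k_1^2)^{\frac13}=A^{-\frac13}|k_1|^{\frac23}\geq A^{-\frac13}$ for $|k_1|\geq1$, and it is what produces the factors $\min\{(A^{-1}\eta^2)^{-1},(A^{-1}k_1^2)^{-\frac13}\}$ on $k_1f_1+k_3f_3$ and the boundary-adapted weights on $f_4$. Because the coefficients in the equation for $\phi$ are time-independent, I would pass to the resolvent formulation: a Fourier transform in $t$ reduces matters to the elliptic problem
\begin{equation*}
	-\frac1A(\partial_y^2-\eta^2)w+i(k_1y-\lambda)w=F,\qquad w|_{y=\pm1}=0,\quad\lambda\in\mathbb{R},
\end{equation*}
and by Plancherel the space-time $L^2L^2$ norms on both sides are recovered by integrating the resolvent bounds over $\lambda$. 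The heart of the matter is then the resolvent estimate
\begin{equation*}
	\|w\|_{L^2}^2\leq C\min\{(A^{-1}\eta^2)^{-1},\,(A^{-1}k_1^2)^{-\frac13}\}\,\|F\|_{L^2}^2,
\end{equation*}
together with its companions carrying $\partial_y$ and the versions adapted to the $f_4$-type forcing; the two regimes in the minimum correspond to $\eta^2\gtrsim k_1^2$ (plain heat dissipation wins) and $\eta\sim k_1$ (the shear-induced enhancement $(A|k_1|)^{\frac13}$ wins).

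The main obstacle will be proving this enhanced resolvent bound, since the plain energy estimate only sees the gap $A^{-1}\eta^2$. The gain comes from the non-degeneracy $\partial_y(k_1y)=k_1\neq0$: testing the resolvent equation against $w$, against $yw$, and against a commutator combination, one controls $k_1$-weighted quantities by $\|\partial_y w\|$ and the forcing, and iterating this exchange yields the $(A|k_1|)^{\frac13}$ scaling characteristic of the Airy/Orr–Sommerfeld operator. The genuinely delicate step is the behaviour near $y=\pm1$: under the Dirichlet condition the resolvent develops an Airy-type boundary layer of width $(A|k_1|)^{-\frac13}$, and the integration-by-parts arguments that generate the enhanced gap produce boundary contributions at $y=\pm1$ that must be shown harmless. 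This is exactly why $f_4$ is assumed to vanish at $y=\pm1$ and enters with the weights $(\eta|k_1|)^{-1}\|\partial_yf_4\|^2+\eta|k_1|^{-1}\|f_4\|^2$: these are the norms for which the boundary integrations close. Once the resolvent bounds are in hand with the correct weights, I would reassemble the space-time estimate by Plancherel, combine it with the energy identity of the first paragraph to recover the $L^\infty L^2$ and $\frac1A\|\partial_y\cdot\|_{L^2L^2}^2$ terms, match each forcing term to the multiplier against which it was tested, and finally fix $a$ small enough that the enhanced coercive term dominates the growth $aA^{-\frac13}\|\phi\|^2$, completing the proof.
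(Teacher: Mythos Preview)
The paper does not prove this proposition; it is quoted verbatim as Proposition~10.1 of Chen--Wei--Zhang~\cite{Chen1} and invoked as a black box in the appendix. Your outline --- basic energy identity for the $L^\infty L^2$ and parabolic dissipation terms, then resolvent estimates via Plancherel in time to extract the enhanced rate $(A^{-1}k_1^2)^{1/3}$, with careful handling of the Dirichlet boundary layer and the $f_4$ weights --- is broadly the strategy of \cite{Chen1}, so there is nothing in the present paper to compare your argument against beyond the citation itself.
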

\begin{proposition}[Proposition 10.2 in \cite{Chen1}]\label{noslip}
Let $f$	be a solution of
\begin{equation*}
	\left\{
	\begin{array}{lr}
		\partial_tf-\frac{1}{A}\left(\partial_{y}^{2}-\eta^{2} \right)f+ik_{1}yf=ik_{1}f_{1}+\partial_{y}f_{2}+ik_{3}f_{3}, \\
		\left(\partial_{y}^{2}-\eta^{2} \right)\varphi=f,\quad \partial_{y}\varphi|_{y=\pm 1}=\varphi|_{y=\pm 1}=0, \\
		f|_{t=0}=f_{\rm in}, 
	\end{array}
	\right.
\end{equation*}
with $\partial_{y}\varphi_{\rm in}|_{y=\pm 1}=0$. Then there holds
\begin{equation*}
	\begin{aligned}
		&|k_{1}\eta|^{\frac12}\|e^{aA^{-\frac13}t}(\partial_{y}, \eta)\varphi\|_{L^{2}L^{2}}+A^{-\frac34}\|e^{aA^{-\frac13}t}\partial_{y}f\|_{L^{2}L^{2}}+A^{-\frac12}\eta\|e^{aA^{-\frac13}t}f\|_{L^{2}L^{2}}\\&+\eta\|e^{aA^{-\frac13}t}(\partial_{y},\eta)\varphi\|_{L^{\infty}L^{2}}+A^{-\frac14}\|e^{aA^{-\frac13}t}f\|_{L^{\infty}L^{2}}\\\leq&CA^{\frac12}\|e^{aA^{-\frac13}t}(f_{1}, f_{2}, f_{3})\|_{L^{2}L^{2}}+C\left(\eta^{-1}\|\partial_{y}f_{\rm in}\|_{L^{2}}+\|f_{\rm in}\|_{L^{2}} \right),	
	\end{aligned}
\end{equation*}
where ``a'' is a non-negative constant, and $  f_{1}, f_{2}, f_{3}, \varphi $ are given functions.
\end{proposition}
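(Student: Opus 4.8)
The plan is to reduce the space-time estimate to a stationary resolvent bound for the Orr--Sommerfeld operator, exploiting that the coefficients of the evolution are independent of $t$. First I would absorb the exponential weight by setting $g=e^{aA^{-1/3}t}f$ and $\psi=e^{aA^{-1/3}t}\varphi$, so that $g=(\partial_y^2-\eta^2)\psi$ and $g$ solves the same equation with an extra lower-order term $-aA^{-1/3}g$. Extending $g$ by zero for $t<0$ (the datum $f_{\rm in}$ then enters as a source) and taking the Fourier transform in time reduces the problem, for each temporal frequency $\tau$, to the resolvent equation
\begin{equation*}
  -\frac{1}{A}(\partial_y^2-\eta^2)\tilde g + ik_1(y-c)\tilde g = \tilde G + f_{\rm in},\qquad \tilde g=(\partial_y^2-\eta^2)\tilde\psi,
\end{equation*}
with $\tilde\psi|_{y=\pm1}=\partial_y\tilde\psi|_{y=\pm1}=0$ and complex phase speed $c=-(\tau+iaA^{-1/3})/k_1$ whose imaginary part lies strictly in the stable half-plane. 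Since the enhanced-dissipation spectral gap of this operator has size $A^{-1/3}$, taking $a$ small keeps the shifted resolvent in the regime where the uniform bounds below hold; this is exactly what affords the weight $e^{aA^{-1/3}t}$.

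The heart of the argument is a resolvent estimate for the Orr--Sommerfeld problem, uniform in $c$ and in $\tau\in\mathbb{R}$. I would derive it by energy methods in the $\tilde\psi$ formulation: testing against $\bar{\tilde\psi}$ and using the no-slip conditions produces the clean dissipation $A^{-1}\|\tilde g\|_{L^2}^2$ with no boundary terms, while the imaginary part of the transport term, $k_1\langle(y-\mathrm{Re}\,c)\tilde g,\tilde\psi\rangle$, encodes the mixing. Combining this with higher-order tests (against $\bar{\tilde g}$ and against multipliers localized around the critical layer $\{y\approx\mathrm{Re}\,c\}$) and optimizing over the layer width yields the sharp $A^{-1/3}$ gain and the weights $A^{-1/2}\eta$, $A^{-3/4}$, $A^{-1/4}$ on the left. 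Plancherel in $\tau$ then turns these frequency-wise bounds into the $L^2L^2$ norms, and integrating the weighted energy identity for $g$ in time gives the $L^\infty L^2$ norms together with the initial-data contributions $\eta^{-1}\|\partial_y f_{\rm in}\|_{L^2}+\|f_{\rm in}\|_{L^2}$.

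The hard part will be the no-slip boundary condition, which is precisely what separates Proposition \ref{noslip} from the Navier-slip estimate of Proposition \ref{slip}. The two conditions $\tilde\psi=\partial_y\tilde\psi=0$ at $y=\pm1$ make the problem genuinely fourth order and force boundary layers of width $\sim A^{-1/3}$ at the walls, across which recovering $\tilde g$ and $\partial_y\tilde g$ from $\tilde\psi$ costs powers of $A$; this accumulated loss is what produces the factor $A^{1/2}$ on the right-hand side. I expect the delicate point to be controlling the boundary contributions of $\partial_y\tilde g$ at $y=\pm1$ that appear in the top-order tests: these cannot be discarded and must be estimated through a matched boundary-layer analysis (comparison with an Airy-type profile on the $A^{-1/3}$ scale) rather than by energy identities alone. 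Away from the walls the interior estimate is governed by the critical layer at $y=\mathrm{Re}\,c$ and can be handled much as in the Navier-slip case.

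Finally I would assemble the pieces: combining the interior resolvent bound with the boundary-layer correction gives the frequency-localized estimate carrying the stated weights, and summing via Plancherel together with the energy identity, then undoing the exponential weight, yields the claimed inequality with an absolute constant $C$ independent of $A$, $k_1$ and $k_3$. Because the statement is quoted verbatim as Proposition 10.2 of \cite{Chen1}, in the paper this step is simply a citation, but the route above is how I would establish it from scratch.
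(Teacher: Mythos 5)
The paper does not prove this proposition at all: it is imported verbatim as Proposition 10.2 of \cite{Chen1} and used as a black box, so the ``proof'' in the paper is the citation, which you correctly acknowledge at the end. Judged on its own terms, your sketch is a reasonable roadmap in the spirit of how such estimates are actually established in \cite{Chen1}: absorb the weight $e^{aA^{-1/3}t}$ into a shift of the spectral parameter by $O(A^{-1/3})$, reduce to resolvent estimates for the Orr--Sommerfeld operator via Fourier transform in time, exploit that testing the $g$-equation against $\bar{\tilde\psi}$ produces $A^{-1}\|\tilde g\|_{L^2}^2$ without boundary terms thanks to $\tilde\psi=\partial_y\tilde\psi=0$, and isolate the no-slip boundary layers of width $A^{-1/3}$ as the source of both the difficulty and the $A^{1/2}$ loss on the right-hand side. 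These are the correct structural observations.

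However, as a proof the proposal has a genuine gap: essentially all of the quantitative content is asserted rather than derived. The specific weights $|k_1\eta|^{1/2}$, $A^{-3/4}$, $A^{-1/2}\eta$ and $A^{-1/4}$ on the left-hand side, the uniform-in-$c$ resolvent bounds near and away from the critical layer, the construction and estimation of the Airy-type boundary correctors at $y=\pm1$ (and the verification that their contribution is controlled by $A^{1/2}\|(f_1,f_2,f_3)\|_{L^2L^2}$ rather than something worse), and the appearance of the initial-data term $\eta^{-1}\|\partial_y f_{\rm in}\|_{L^2}+\|f_{\rm in}\|_{L^2}$ are all named as outcomes of steps you do not carry out; in \cite{Chen1} each of these occupies a substantial portion of the argument and none is routine. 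So your text is a correct identification of the strategy and of where the difficulty lies, but it would not substitute for the cited result; for the purposes of this paper, the citation is the intended and sufficient justification.
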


The next lemma will be frequently used in the estimates of $ \|n_{(0,0)}\|_{L^{\infty}L^{2}}. $
\begin{lemma}\label{lemmaa1}
For given functions $f_1(y,z)$ and $f_2(y,z)$, satisfying $f_1(y,z), f_{2}(y,z)\in H^{1}\left(\mathbb{I}\times\mathbb{T} \right)$ , there holds
\begin{equation}\label{A.1}
	\|(f_1f_2)_{(0,0)}\|_{L^2}\leq C\|f_1\|_{L^2}\left(\|f_2\|^{\frac{1}{2}}_{L^2}
	\|\partial_yf_2\|_{L^2}^{\frac{1}{2}}+\|f_{2}\|_{L^{2}} \right).
\end{equation}
Especially, if $ f_{2}(y,z)\in H_{0}^{1}(\mathbb{I}\times\mathbb{T}) $, the result can be simplified as
\begin{equation}\label{A.2}
	\|(f_1f_2)_{(0,0)}\|_{L^2}\leq C\|f_1\|_{L^2}\|f_2\|^{\frac{1}{2}}_{L^2}
	\|\partial_yf_2\|_{L^2}^{\frac{1}{2}},
\end{equation}
where $ f_{i,(0,0)}=\frac{1}{|\mathbb{T}|}\int_{\mathbb{T}}f_{i}(y,z)dz, $ for $ i=1,2. $	
\end{lemma}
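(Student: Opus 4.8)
The plan is to reduce this two-dimensional estimate to a one-dimensional one in the $y$ variable by integrating out $z$ first. Write $g(y):=(f_1f_2)_{(0,0)}(y)=\frac{1}{|\mathbb{T}|}\int_{\mathbb{T}}f_1(y,z)f_2(y,z)\,dz$, which depends on $y$ alone, and introduce the slice functions $F_i(y):=\big(\int_{\mathbb{T}}|f_i(y,z)|^2\,dz\big)^{1/2}=\|f_i(y,\cdot)\|_{L^2(\mathbb{T})}$ for $i=1,2$. Applying the Cauchy--Schwarz inequality in $z$ pointwise in $y$ yields $|g(y)|\le\frac{1}{|\mathbb{T}|}F_1(y)F_2(y)$, and hence (with the norm of $g$ taken over $\mathbb{I}$, the harmless factors of $|\mathbb{T}|$ absorbed into $C$)
\[
\|g\|_{L^2}^2\le\frac{1}{|\mathbb{T}|^2}\,\|F_2\|_{L^\infty(\mathbb{I})}^2\int_{\mathbb{I}}F_1(y)^2\,dy=\frac{1}{|\mathbb{T}|^2}\,\|F_2\|_{L^\infty(\mathbb{I})}^2\,\|f_1\|_{L^2}^2.
\]
Everything thus reduces to an $L^\infty$ bound on the scalar function $F_2$ on the interval $\mathbb{I}=[-1,1]$.

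For this step I would deliberately avoid differentiating the square root $F_2=\sqrt{\Phi}$ (which may vanish) and instead work directly with $\Phi(y):=F_2(y)^2=\int_{\mathbb{T}}|f_2(y,z)|^2\,dz$, which lies in $W^{1,1}(\mathbb{I})$ with $\Phi'(y)=2\int_{\mathbb{T}}f_2\,\partial_yf_2\,dz$. By the fundamental theorem of calculus, $\Phi(y)=\Phi(y_0)+\int_{y_0}^{y}\Phi'(t)\,dt$ for any $y,y_0\in\mathbb{I}$; averaging the base point $y_0$ over $\mathbb{I}$ and estimating $\int_{\mathbb{I}}|\Phi'|\,dt\le 2\|f_2\|_{L^2}\|\partial_yf_2\|_{L^2}$ by Cauchy--Schwarz gives $\|F_2\|_{L^\infty(\mathbb{I})}^2=\|\Phi\|_{L^\infty(\mathbb{I})}\le C\|f_2\|_{L^2}^2+2\|f_2\|_{L^2}\|\partial_yf_2\|_{L^2}$. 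Taking square roots yields $\|F_2\|_{L^\infty(\mathbb{I})}\le C\big(\|f_2\|_{L^2}^{1/2}\|\partial_yf_2\|_{L^2}^{1/2}+\|f_2\|_{L^2}\big)$, which combined with the display above proves \eqref{A.1}.

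For \eqref{A.2}, the hypothesis $f_2\in H_0^1(\mathbb{I}\times\mathbb{T})$ forces $f_2(\pm1,z)=0$ for a.e.\ $z$, hence $\Phi(\pm1)=F_2(\pm1)^2=0$. Choosing the base point $y_0=1$ (or $-1$) in the identity above kills the boundary term, so $\Phi(y)=\int_{1}^{y}\Phi'(t)\,dt\le 2\|f_2\|_{L^2}\|\partial_yf_2\|_{L^2}$ and therefore $\|F_2\|_{L^\infty(\mathbb{I})}\le C\|f_2\|_{L^2}^{1/2}\|\partial_yf_2\|_{L^2}^{1/2}$ without the lower-order term, which gives \eqref{A.2}.

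The argument is essentially routine, so the only point demanding care is the $L^\infty$ control of the slice $F_2$: there I would insist on working with $\Phi=F_2^2\in W^{1,1}$ rather than with $F_2$ itself, precisely to sidestep the non-differentiability of $\sqrt{\Phi}$ at its zeros. The role of the boundary condition is exactly to eliminate the constant $\|f_2\|_{L^2}$ term, which is the only difference between the two conclusions.
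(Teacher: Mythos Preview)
Your proof is correct and takes a genuinely different route from the paper's. The paper expands both $f_1$ and $f_2$ in Fourier series in $z$, writes $(f_1f_2)_{(0,0)}=\sum_{k_3}f_1^{k_3}(y)f_2^{-k_3}(y)$, applies the one-dimensional Agmon inequality $\|f_2^{-k_3}\|_{L^\infty_y}\le C(\|f_2^{-k_3}\|_{L^2}^{1/2}\|\partial_y f_2^{-k_3}\|_{L^2}^{1/2}+\|f_2^{-k_3}\|_{L^2})$ mode by mode, and then sums over $k_3$ via Cauchy--Schwarz and Plancherel. You instead stay entirely in physical space: a single Cauchy--Schwarz in $z$ reduces the problem to bounding $\|F_2\|_{L^\infty_y}$ for the slice function $F_2(y)=\|f_2(y,\cdot)\|_{L^2_z}$, and you obtain this cleanly by working with $\Phi=F_2^2\in W^{1,1}(\mathbb{I})$ and the fundamental theorem of calculus. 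Both arguments hinge on the same one-dimensional $L^\infty$ control in $y$, but your version is more elementary (no Fourier needed) and your device of passing to $\Phi$ to avoid differentiating a square root is a nice technical touch; the paper's Fourier decomposition, on the other hand, meshes naturally with the frequency-space framework used throughout the rest of the paper. In both approaches the $H_0^1$ hypothesis enters identically, by killing the lower-order term in the one-dimensional Agmon-type bound.
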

\begin{proof}
Let $f_1=\sum_{k_3\in\mathbb{Z}}f_{1}^{k_{3}}(y)e^{ik_3z}$	
and
$f_2=\sum_{k_3\in\mathbb{Z}}f_{2}^{k_{3}}(y)e^{ik_3z},$	
where
\begin{equation*}
	f_{1}^{k_{3}}(y)=\frac{1}{|\mathbb{T}|}\int_{\mathbb{T}}f_{1}(y,z)e^{-ik_{3}z}dz,\quad f_{2}^{k_{3}}(y)=\frac{1}{|\mathbb{T}|}\int_{\mathbb{T}}f_{2}(y,z)e^{-ik_{3}z}dz,
\end{equation*}
then 
$$(f_1f_2)_{(0,0)}=\sum_{k_3\in\mathbb{Z}}f_{1}^{k_{3}}(y)f_{2}^{-k_{3}}(y).$$
Notice that
\begin{equation}\label{A.3}
	\|f_{2}^{-k_{3}}(y)\|_{L^{\infty}}\leq C\left(\|f_{2}^{-k_{3}}(y)\|_{L^{2}}^{\frac12}\|\partial_{y}f_{2}^{-k_{3}}(y)\|_{L^{2}}^{\frac12}+\|f_{2}^{-k_{3}}(y)\|_{L^{2}} \right),
\end{equation}
then direct calculations yield that
\begin{equation*}
	\begin{aligned}
		&	\|(f_1f_2)_{(0,0)}\|_{L^2}\\=&\sum_{k_3\in\mathbb{Z}}\|f_{1}^{k_{3}}(y)f_{2}^{-k_{3}}(y)\|_{L^2}
		\leq \sum_{k_3\in\mathbb{Z}}\|f_{1}^{k_3}(y)\|_{L^2}\|f_{2}^{-k_3}(y)\|_{L^{\infty}}\\
		\leq& C\sum_{k_3\in\mathbb{Z}}\|f_{1}^{k_3}(y)\|_{L^2}\|f_{2}^{-k_3}(y)\|^{\frac{1}{2}}_{L^{2}}\|\partial_yf_{2}^{-k_3}(y)\|^{\frac{1}{2}}_{L^{2}}+C\sum_{k_{3}\in\mathbb{Z}}\|f_{1}^{k_{3}}(y)\|_{L^{2}}\|f_{2}^{-k_{3}}(y)\|_{L^{2}}\\\leq& C\left(\sum_{k_{3}\in\mathbb{Z}}\|f_{1}^{k_{3}}(y)\|_{L^{2}}^{2} \right)^{\frac12}\left(\sum_{k_{3}\in\mathbb{Z}}\|f_{2}^{k_{3}}(y)\|_{L^{2}}^{2} \right)^{\frac14}\left(\sum_{k_{3}\in\mathbb{Z}}\|\partial_{y}f_{2}^{k_{3}}(y)\|_{L^{2}}^{2} \right)^{\frac14}\\&+C\left(\sum_{k_{3}\in\mathbb{Z}}\|f_{1}^{k_{3}}(y)\|_{L^{2}}^{2} \right)^{\frac12}\left(\sum_{k_{3}\in\mathbb{Z}}\|f_{2}^{k_{3}}(y)\|_{L^{2}}^{2} \right)^{\frac12}\\\leq& C\|f_{1}\|_{L^{2}}\left(\|f_{2}\|_{L^{2}}^{\frac12}\|\partial_{y}f_{2}\|_{L^{2}}^{\frac12}+\|f_{2}\|_{L^{2}} \right),
	\end{aligned}
\end{equation*}
which implies (\ref{A.1}). 

Moreover, for $ f_{2}\in H_{0}^{1}(\mathbb{I}\times\mathbb{T}), $ there is no lower order term in (\ref{A.3}), thus (\ref{A.2}) holds. 
The proof is complete.
\end{proof}

To  estimate  $ \|n_{(0,\neq)}\|_{L^{\infty}L^{2}}, $ we need the following lemma to provide the bound-ness of $ \|\nabla c_{(0,\neq)}\|_{L^{\infty}}. $
\begin{lemma}\label{lemma_001}
Let $f(y,z)$ be a function such that $f_{(0,\neq)}\in H^2(\mathbb{I}\times \mathbb{T})$, then there holds	
$$||f_{(0,\neq)}||_{L^\infty}
\leq C\big\|\nabla f_{(0,\neq)}\big\|_{L^2}^{1-\epsilon}
\big\|
\partial_z\nabla f_{(0,\neq)}\big\|_{L^2}^{\epsilon},$$
where $\epsilon\in(0,1].$
\end{lemma}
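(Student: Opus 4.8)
The plan is to diagonalize the $z$-dependence by a Fourier series and thereby reduce the two-dimensional $L^\infty$ bound to one-dimensional estimates on the interval $\mathbb{I}$, crucially exploiting that $f_{(0,\neq)}$ has vanishing $z$-average so that only frequencies $k_3\neq 0$ occur. Writing $f_{(0,\neq)}(y,z)=\sum_{k\neq 0}g_k(y)\,{\rm e}^{ikz}$ with $g_k(y)=\frac{1}{|\mathbb{T}|}\int_{\mathbb{T}}f_{(0,\neq)}(y,z){\rm e}^{-ikz}\,dz$, Parseval's identity gives, up to the harmless constant $|\mathbb{T}|$, that $\|\nabla f_{(0,\neq)}\|_{L^2}^2\simeq\sum_{k\neq0}a_k^2$ and $\|\partial_z\nabla f_{(0,\neq)}\|_{L^2}^2\simeq\sum_{k\neq0}k^2a_k^2$, where I abbreviate $a_k^2:=\|\partial_yg_k\|_{L^2(\mathbb{I})}^2+k^2\|g_k\|_{L^2(\mathbb{I})}^2$.

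First I would bound the supremum by the mode sum, $\|f_{(0,\neq)}\|_{L^\infty}\le\sum_{k\neq0}\|g_k\|_{L^\infty(\mathbb{I})}$, and control each coefficient by the one-dimensional Agmon (Gagliardo--Nirenberg) inequality $\|g_k\|_{L^\infty(\mathbb{I})}^2\le C\big(\|g_k\|_{L^2(\mathbb{I})}\|\partial_yg_k\|_{L^2(\mathbb{I})}+\|g_k\|_{L^2(\mathbb{I})}^2\big)$. Since $k\neq0$, the elementary bound $\|g_k\|_{L^2}\le|k|^{-1}\big(|k|\,\|g_k\|_{L^2}\big)\le|k|^{-1}a_k$ together with $\|\partial_yg_k\|_{L^2}\le a_k$ yields $\|g_k\|_{L^\infty(\mathbb{I})}^2\le C\big(|k|^{-1}a_k^2+k^{-2}a_k^2\big)\le C|k|^{-1}a_k^2$, hence $\|g_k\|_{L^\infty(\mathbb{I})}\le C|k|^{-1/2}a_k$. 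This $|k|^{-1/2}$ gain, which comes entirely from the non-zero-mode structure, is exactly what will render the mode sum summable.

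Finally I would sum over $k$ by a weighted Cauchy--Schwarz. For $\epsilon\in(0,1]$ I split $|k|^{-1/2}a_k=|k|^{-1/2-\epsilon}\cdot|k|^{\epsilon}a_k$ and obtain $\sum_{k\neq0}|k|^{-1/2}a_k\le\big(\sum_{k\neq0}|k|^{-1-2\epsilon}\big)^{1/2}\big(\sum_{k\neq0}|k|^{2\epsilon}a_k^2\big)^{1/2}$, where the first factor is a finite constant precisely because $\epsilon>0$. The second factor is then interpolated by H\"older, using $k^{2\epsilon}a_k^2=(a_k^2)^{1-\epsilon}(k^2a_k^2)^{\epsilon}$, to give $\sum_{k\neq0}|k|^{2\epsilon}a_k^2\le\big(\sum_{k\neq0}a_k^2\big)^{1-\epsilon}\big(\sum_{k\neq0}k^2a_k^2\big)^{\epsilon}$. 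Combining the three displays and identifying the two sums with $\|\nabla f_{(0,\neq)}\|_{L^2}^2$ and $\|\partial_z\nabla f_{(0,\neq)}\|_{L^2}^2$ produces exactly $\|f_{(0,\neq)}\|_{L^\infty}\le C\|\nabla f_{(0,\neq)}\|_{L^2}^{1-\epsilon}\|\partial_z\nabla f_{(0,\neq)}\|_{L^2}^{\epsilon}$. The main point to watch is the convergence of the weight series $\sum_{k\neq0}|k|^{-1-2\epsilon}$, which forces $\epsilon>0$ and explains why the estimate must degenerate as $\epsilon\to0$; the endpoint $\epsilon=1$ is immediate, since there the H\"older step is an equality and $\sum_{k\neq0}|k|^{-3}<\infty$ directly.
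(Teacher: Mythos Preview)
Your proof is correct and follows essentially the same route as the paper: Fourier decomposition in $z$, one-dimensional Agmon/Gagliardo--Nirenberg on each mode, a weighted Cauchy--Schwarz over $k_3\neq0$ using the summability of $\sum|k_3|^{-1-2\epsilon}$, and a final H\"older interpolation between $\sum a_k^2$ and $\sum k^2a_k^2$. The only cosmetic difference is that you package the modewise quantities into $a_k^2=\|\partial_yg_k\|_{L^2}^2+k^2\|g_k\|_{L^2}^2$ and derive the clean bound $\|g_k\|_{L^\infty}\le C|k|^{-1/2}a_k$ before summing, whereas the paper keeps the two Gagliardo--Nirenberg terms separate and weights them individually; your version is slightly more streamlined but the substance is identical.
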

\begin{proof}
For given 
$$f_{(0,\neq)}=\sum_{k_{3}\neq 0}f_{(0,\neq)}^{k_{3}}(y)e^{ik_{3}z},~~{\rm and}~~f_{(0,\neq)}^{k_{3}}(y)=\frac{1}{|\mathbb{T}|}\int_{\mathbb{T}}f_{(0,\neq)}(y,z)e^{-ik_{3}z}dz,$$
Gagliardo-Nirenberg inequality implies 
$$||f_{(0,\neq)}^{k_3}(t,y)||_{L^\infty_y}\leq C\left( 
||f_{(0,\neq)}^{k_3}(t,\cdot)||_{L^2}^{\frac{1}{2}}||\partial_y f_{(0,\neq)}^{k_3}(t,\cdot)||_{L^2}^{\frac{1}{2}}+||f_{(0,\neq)}^{k_3}(t,\cdot)||_{L^2}\right),$$
and we have
\begin{equation*}
	\begin{aligned}
		||f_{(0,\neq)}||_{L^\infty}\leq&\sum_{k_3\neq0}||f_{(0,\neq)}^{k_3}(t,y)||_{L^\infty_y}
		\\\leq& C\sum_{k_3\neq0}||f_{(0,\neq)}^{k_3}(t,\cdot)||_{L^2}^{\frac{1}{2}}||\partial_yf_{(0,\neq)}^{k_3}(t,\cdot)||_{L^2}^{\frac{1}{2}}+C\sum_{k_{3}\neq 0}||f_{(0,\neq)}^{k_3}(t,\cdot)||_{L^2}.
	\end{aligned}
\end{equation*}
Furthermore, by H\"{o}lder's inequality we get
\begin{equation}
	\begin{aligned}
		||f_{(0,\neq)}||_{L^\infty}
		\leq& C\Big(\sum_{k_{3}\neq0}|k_{3}|^{1+2\epsilon}||f_{(0,\neq)}^{k_3}(t,\cdot)||_{L^2}||\partial_yf_{(0,\neq)}^{k_3}(t,\cdot)||_{L^2}\Big)^{\frac{1}{2}}
		\Big(\sum_{k_{3}\neq0}\frac{1}{|k_{3}|^{1+2\epsilon}}\Big)^{\frac{1}{2}}\\&+C\left(\sum_{k_{3}\neq 0}|k_{3}|^{2\varepsilon} \|k_{3}f_{(0,\neq)}^{k_{3}}(t,\cdot)\|_{L^{2}}^{2} \right)^{\frac12}\left(\sum_{k_{3}\neq 0}\frac{1}{|k_{3}|^{1+\varepsilon}} \right) \\
		\leq& C \Big(\sum_{k_{3}\neq0}|k_{3}|^{2\epsilon}||k_{3}f_{(0,\neq)}^{k_3}(t,\cdot)||^2_{L^2}\Big)^{\frac{1}{4}}\Big(\sum_{k_{3}\neq0}|k_{3}|^{2\epsilon}\|  \partial_yf_{(0,\neq)}^{k_3}(t,\cdot)\|^2_{L^2}\Big)^{\frac{1}{4}}\\&+C\left(\sum_{k_{3}\neq 0}|k_{3}|^{2\varepsilon} \|k_{3}f_{(0,\neq)}^{k_{3}}(t,\cdot)\|_{L^{2}}^{2} \right)^{\frac12}, \nonumber
	\end{aligned}
\end{equation}
where $\epsilon\in(0,1].$	

Using  H\"{o}lder's inequality again, we have
\begin{equation}
	\begin{aligned}
		\sum_{k_{3}\neq0}|k_{3}|^{2\epsilon}||k_{3}f_{(0,\neq)}^{k_3}(t,\cdot)||^2_{L^2}
		&\leq C\sum_{k_{3}\neq0}\|k_{3}^2f_{(0,\neq)}^{k_3}(t,\cdot)\|^{2\epsilon}_{L^2}\|k_{3}f_{(0,\neq)}^{k_3}(t,\cdot)\|^{2(1-\epsilon)}_{L^2} \\
		&\leq C\big\|
		\partial_zf_{(0,\neq)}\big\|_{L^2}^{2(1-\epsilon)}
		\big\|
		\partial_{z}
		\nabla f_{(0,\neq)}\big\|_{L^2}^{2\epsilon}.
		\nonumber
	\end{aligned}
\end{equation}
Similarly, we have
\begin{equation}
	\begin{aligned}
		\sum_{k_{3}\neq0}|k_{3}|^{2\epsilon}||\partial_yf_{(0,\neq)}^{k_3}(t,\cdot)||^2_{L^2}
		&\leq C\|\partial_y f_{(0,\neq)}\|_{L^2}^{2(1-\epsilon)}
		\big\|
		\partial_{z}\nabla f_{(0,\neq)}\big\|_{L^2}^{2\epsilon}.
		\nonumber
	\end{aligned}
\end{equation}

Thus, we conclude that	
\begin{equation*}
	\begin{aligned}
		||f_{(0,\neq)}||_{L^\infty}\leq &C\big\|
		\partial_z f_{(0,\neq)}\big\|_{L^2}^{\frac{1-\epsilon}{2}}
		\|\partial_y f_{(0,\neq)}\|_{L^2}^{\frac{1-\epsilon}{2}}
		\big\|
		\partial_{z} \nabla f_{(0,\neq)}\big\|_{L^2}^{\epsilon}\\&+C\|\partial_{z}f_{(0,\neq)}\|_{L^{2}}^{1-\varepsilon}\|\partial_{z}\nabla f_{(0,\neq)}\|_{L^{2}}^{\varepsilon} 
		\\\leq& C\big\|\nabla f_{(0,\neq)}\big\|_{L^2}^{1-\epsilon}
		\big\|
		\partial_z\nabla f_{(0,\neq)}\big\|_{L^2}^{\epsilon},
	\end{aligned}
\end{equation*}
for $\epsilon\in(0,1].$ We complete the proof.
\end{proof}

The following lemma provides a specific embedding coefficient to determine an upper bound on the initial cell mass $ M. $
\begin{lemma}\label{A1}
Let $ f(y,z) $ be a function such that $ f_{(0,\neq)}\in H^{1}(\mathbb{I}\times\mathbb{T}) $ and $ f_{(0,\neq)}|_{y=\pm 1}=0, $ then there holds
\begin{equation}\label{eq A1}
	\|f_{(0,\neq)}\|_{L^{3}}^{3}\leq \frac94\|f_{(0,\neq)}\|_{L^{1}}\|\nabla f_{(0,\neq)}\|_{L^{2}}^{2}.
\end{equation}
\end{lemma}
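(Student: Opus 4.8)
The plan is to reduce this two–dimensional estimate to two one–dimensional pointwise bounds — one in the $y$–variable, exploiting the Dirichlet condition $f_{(0,\neq)}|_{y=\pm1}=0$, and one in the $z$–variable, exploiting the zero $z$–average built into the notation $(0,\neq)$ — and then to multiply them. Write $g:=f_{(0,\neq)}$ for brevity. First I would record the elementary identity $\partial_y(|g|^{3/2})=\tfrac32|g|^{1/2}\,\mathrm{sgn}(g)\,\partial_y g$; integrating this from the boundary $y=-1$, where $g$ vanishes, gives the bound $|g(y,z)|^{3/2}\le \tfrac32\int_{\mathbb I}|g|^{1/2}|\partial_y g|\,dy'=:A(z)$, which is uniform in $y$ for each fixed $z$.

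Next I would obtain the companion bound in $z$. For each fixed $y$ one has $\int_{\mathbb T} g(y,z)\,dz=0$, so continuity forces a point $z^\ast=z^\ast(y)$ with $g(y,z^\ast)=0$ (unless $g(y,\cdot)\equiv0$, in which case that slice contributes nothing). Integrating $\partial_z(|g|^{3/2})$ from $z^\ast$ then yields $|g(y,z)|^{3/2}\le \tfrac32\int_{\mathbb T}|g|^{1/2}|\partial_z g|\,dz'=:B(y)$, uniform in $z$. The structural point is that $A$ depends only on $z$ while $B$ depends only on $y$.

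Multiplying the two pointwise bounds gives $|g(y,z)|^{3}\le A(z)\,B(y)$ everywhere, so integrating over $\mathbb I\times\mathbb T$ factorises the right-hand side, and substituting the definitions of $A,B$ produces the prefactor $\tfrac32\cdot\tfrac32$:
\[
\int_{\mathbb I\times\mathbb T}|g|^3\,dy\,dz\le\Big(\int_{\mathbb T}A(z)\,dz\Big)\Big(\int_{\mathbb I}B(y)\,dy\Big)=\frac94\Big(\iint|g|^{1/2}|\partial_y g|\Big)\Big(\iint|g|^{1/2}|\partial_z g|\Big).
\]
Finally I would apply Cauchy–Schwarz to each factor, $\iint|g|^{1/2}|\partial_y g|\le\|g\|_{L^1}^{1/2}\|\partial_y g\|_{L^2}$ and likewise for $\partial_z$, which collects the two halves of $\|g\|_{L^1}$; bounding $\|\partial_y g\|_{L^2},\|\partial_z g\|_{L^2}\le\|\nabla g\|_{L^2}$ then gives $\|g\|_{L^3}^3\le\tfrac94\|g\|_{L^1}\|\nabla g\|_{L^2}^2$, which is exactly \eqref{eq A1}.

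I expect the only genuine obstacle to be the $z$–direction bound: unlike $y$, the torus has no boundary at which $g$ is known to vanish, so the integration used for $A(z)$ is not directly available. The remedy is precisely the zero–mean hypothesis, which supplies a vanishing point $z^\ast(y)$ from which to integrate; this is also why the lemma is phrased for the $(0,\neq)$ component rather than for an arbitrary $H^1$ function vanishing only on $\{y=\pm1\}$. A minor technical point is the differentiability of $|g|^{3/2}$ and the a.e.\ validity of the chain rule, which is standard for $H^1$ functions and can be justified by a routine approximation argument.
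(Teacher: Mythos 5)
Your proof is correct and follows essentially the same route as the paper's: both arguments are the Ladyzhenskaya-type splitting applied to $|f_{(0,\neq)}|^{3/2}$, with the factor $\frac94=(\frac32)^2$ coming from $\nabla(|g|^{3/2})=\frac32|g|^{1/2}\,\mathrm{sgn}(g)\nabla g$ followed by Cauchy--Schwarz. If anything, you are slightly more careful than the paper, which justifies the $z$-direction pointwise bound only by the word ``Similarly,'' whereas you correctly note that on the torus one must invoke the zero $z$-mean of $f_{(0,\neq)}$ to produce a vanishing point $z^\ast(y)$ from which to integrate.
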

\begin{proof}
Noting that
$ f_{(0,\neq)}|_{y=\pm 1}=0, $ then for any $ y\in[-1,1], $ we arrive at
\begin{equation*}
	f_{(0,\neq)}=\int_{-1}^{y}\partial_{y}f_{(0,\neq)}dy,
\end{equation*}
which indicates that
\begin{equation}\label{0}
	|f_{(0,\neq)}|\leq\int_{-1}^{1}|\partial_{y}f_{(0,\neq)}|dy.
\end{equation}	
Similarly
\begin{equation}\label{00}
	|f_{(0,\neq)}|\leq \int_{0}^{2\pi}|\partial_{z}f_{(0,\neq)}|dz.
\end{equation}
It follows from (\ref{0}) and (\ref{00}) that
\begin{equation}\label{000}
	\int_{\mathbb{I}\times\mathbb{T}}|f_{(0,\neq)}|^{2}dydz\leq \left(\int_{\mathbb{I}\times\mathbb{T}}|\nabla f_{(0,\neq)}|dydz \right)^{2}.
\end{equation}
Using (\ref{000}), there holds
\begin{equation*}
	\begin{aligned}
		\int_{\mathbb{I}\times\mathbb{T}}\left|f_{(0,\neq)}^{\frac32}\right|^{2}dydz\leq& \left(\int_{\mathbb{I}\times\mathbb{T}}\left|\nabla \left(f_{(0,\neq)}^{\frac32} \right)\right|dydz \right)^{2}\\
		\leq&\frac{9}{4}\|f_{(0,\neq)}\|_{L^{1}}\|\nabla f_{(0,\neq)}\|_{L^{2}}^{2},
	\end{aligned}
\end{equation*}
whh implies (\ref{eq A1}). The proof is complete.

\end{proof}

\section*{Acknowledgement}
The authors would like to thank Professors Zhifei Zhang, Zhaoyin Xiang and Ruizhao Zi for some helpful communications. W. Wang was supported by National Key R\&D Program of China (No. 2023YFA1009200), NSFC under grant 12071054, and by Dalian High-level Talent Innovation Project (Grant 2020RD09).
S. Cui's research was conducted while visiting McMaster University as a joint Ph.D. student. He expresses gratitude for China Scholarship Council's support and Professor Dmitry Pelinovsky's discussions.

\section*{Declaration of competing interest}
The authors declare that they have no known competing financial interests
or personal relationships that could have appeared to
influence the work reported in this paper.
\section*{Data availability}
No data was used in this paper.

\end{document}